


\documentclass{amsart}

\usepackage{amsmath,amsthm,amsfonts,amssymb,amscd,verbatim,delarray,url}
\usepackage[arrow,matrix]{xy}


\pagestyle{headings}

\DeclareMathOperator{\PSL}{PSL} \DeclareMathOperator{\SL}{SL}
 
\DeclareMathOperator{\tr}{tr} 
 \DeclareMathOperator{\Spec}{Spec}
\DeclareMathOperator{\id}{id}

\begin{document}

\theoremstyle{plain}
\newtheorem{theorem}{Theorem}[section]
\newtheorem{conj}[theorem]{Conjecture}
\newtheorem{corollary}[theorem]{Corollary}
\newtheorem{prop}[theorem]{Proposition}
\newtheorem{Lemma}[theorem]{Lemma}

\theoremstyle{definition}
\newtheorem{definition}[theorem]{Definition}
\newtheorem{quest}[theorem]{Question}
\newtheorem{remark}[theorem]{Remark}
\newtheorem{example}[theorem]{Example}
\newtheorem*{answer}{Answer}
\newtheorem{notation}[theorem]{Notation}

\newcommand{\thmref}[1]{Theorem~\ref{#1}}
\newcommand{\secref}[1]{Section~\ref{#1}}
\newcommand{\subsecref}[1]{Subsection~\ref{#1}}
\newcommand{\lemref}[1]{Lemma~\ref{#1}}
\newcommand{\corolref}[1]{Corollary~\ref{#1}}
\newcommand{\exampref}[1]{Example~\ref{#1}}
\newcommand{\remarkref}[1]{Remark~\ref{#1}}
\newcommand{\defnref}[1]{Definition~\ref{#1}}
\newcommand{\propref}[1]{Proposition~\ref{#1}}

\newcommand{\BZ}{\mathbb{Z}}
\newcommand{\BC}{\mathbb{C}}
\newcommand{\BN}{\mathbb{N}}
\newcommand{\BP}{\mathbb{P}}
\newcommand{\BF}{\mathbb{F}}
\newcommand{\BA}{\mathbb{A}}
\newcommand{\BQ}{\mathbb{Q}}
\newcommand{\BM}{\mathbb{M}}
\newcommand{\BX}{\mathbb{X}}
\newcommand{\BY}{\mathbb{Y}}

\newcommand{\ep}{\epsilon}
\newcommand{\al}{\alpha}
\newcommand{\be}{\beta}
\newcommand{\ga}{\gamma}
\newcommand{\de}{\delta}
\newcommand{\la}{\lambda}
\newcommand{\om}{\omega}
\newcommand{\vp}{\varphi}
\newcommand{\st}{\sigma}

\newcommand{\vareps}{\varepsilon}
\newcommand{\G}{\Gamma}

\newcommand{\ov}{\overline}

\newcommand{\fc}{\frac}

\newcommand\cR{\mathcal R}
\newcommand\cP{\mathcal P}



\title[Criteria for equidistribution for word equations on
$\SL(2)$]
{Criteria for equidistribution of solutions of word equations on
$\SL(2)$}

\author{Tatiana Bandman}
\address{Department of Mathematics, Bar-Ilan University, 5290002 Ramat Gan,
Israel} \email{bandman@macs.biu.ac.il}

\author{Boris Kunyavski\u\i }
\email{kunyav@macs.biu.ac.il}

\begin{abstract}
We study equidistribution of solutions of word equations of the form
$w(x,y)=g$ in the family of finite groups $\SL(2,q)$. We provide
criteria for equidistribution in terms of the trace polynomial of
$w$. This allows us to get an explicit description of certain
classes of words possessing the equidistribution property and show
that this property is generic within these classes.
\end{abstract}



\maketitle

\section {Introduction}\label{sec:intro}

Equidistribution of solutions of various (systems of) diophantine
equations has been remaining one of central topics in number theory,
arithmetic geometry, ergodic theory. It is not our goal to review
vast literature in the area. The reader interested in evolution of
ideas in this fascinating domain of mathematics may find instructive
to overview materials of ICM's, starting from the foundational
address by Linnik (Stockholm, 1962) until impressive contributions
of the past two decades: Margulis, Sarnak (Kyoto, 1990); Dani,
Ratner (Z\"urich, 1994); Eskin (Berlin, 1998); Ullmo (Beijing,
2002); Einsiedler--Lindenstrauss, Michel--Venkatesh, Tschinkel
(Madrid, 2006); Oh, Shah (Hyderabad, 2010). Each of the approaches
mentioned above assumes its own understanding of the notion of
equidistribution. What most of them share in common is focusing on
certain group actions arising in a natural way and allowing one to
combine methods of number theory and dynamical systems with
group-theoretic considerations.

Let us describe the circle of problems we are interested in. First,
we want to study {\it polynomial matrix equations}. In the most
general form, one can consider equations of the form $P(A_1,\dots
,A_m, X_1,\dots , X_d)=0$ where $n\times n$-matrices $A_1,\dots,
A_m$ with entries from a ring $R$ are {\it given}, $X_1,\dots ,X_d$
are {\it unknowns}, and $P$ is an {\it associative noncommutative}
polynomial. We, however, restrict our attention to a particular
class of equations of the form $P(X_1,\dots , X_d)=A$ where $A$ is a
given matrix, $X_1,\dots ,X_d$ are unknowns, and a solution must
belong to a fixed subset $\mathcal M\subset {\text{\rm M}}(n,R)^d$.
There are several cases where such an equation has a solution for a
``generic'' $A$ (here $R=K$ is an algebraically closed field):
\begin{itemize}
\item
$\mathcal M=G(K)^d$ where $G(K)$ is the group of rational points of
a connected semisimple algebraic group and $P=w\ne 1$ is a
nontrivial word (=monomial in $X_1, X_1^{-1},\dots ,X_d, X_d^{-1}$)
(Borel \cite{Bo}, Larsen \cite{La});

\item
$\mathcal M=\mathfrak g^d$ where the Lie algebra $\mathfrak g$ of a
semisimple algebraic $K$-group and a Lie polynomial $P$ satisfy some
additional assumptions (Bandman, Gordeev, Kunyavski\u\i \ and
Plotkin \cite{BGKP});

\item
$\mathcal M={\text{\rm M}}(n,R)^d$ and $P$ satisfies some additional
assumptions (Kanel-Belov, Malev and Rowen \cite{KBMR}).
\end{itemize}

If $R=\mathbb Z$, in all these cases we may interpret the situation
as follows: the generic fibre of the morphism $\mathbb P\colon
\mathbb M^d\to\mathbb M$ of $\mathbb Z$-schemes, induced by the
polynomial $P$, is a {\it dominant} morphism of $\mathbb Q$-schemes.

One can ask whether the situation is similar in {\it special} fibres
of the morphism $P$. As the notion of dominance does not make much
sense for finite sets, we would like to formalize the following
phenomena:

\begin{itemize}
\item
the maps $P_q\colon (M_q)^d\to M_q$ 
have ``asymptotically large'' images;
\item
the number $\#\{(A_1,\dots ,A_d)\in (M_q)^d: P_q(A_1,\dots
,A_d)=A\}$ (where $q=p^n$; $p=2,3,5,\dots$; $A$ runs over a
``large'' subset of $M_q$) is, in some reasonable sense, almost
independent of $A$.
\end{itemize}

(Here $M_q$ denotes the set of $\mathbb F_q$-points of the fibre of
the scheme $\mathbb M$ at $q$, and $P_q$ is the fibre of the
morphism $\mathbb P$ at $q$.)

The conditions formulated above mean that the equations $P(X_1,\dots
,X_d)=A$, with the right-hand side running, for each $q$, over
``almost whole'' set $M_q$, have many and almost equally many
solutions in $(M_q)^d$, respectively. We shall call such morphisms
{\it $p$-almost equidistributed}, or {\it almost equidistributed}
(depending on whether $p$ in the second condition is or is not
fixed); the word ``almost'' will often be dropped. See Section
\ref{results} for precise definitions.

According to Larsen \cite{La}, Larsen and Shalev \cite{LS1}, for any
word $w\ne 1$ and any family of Chevalley groups $G_q$ of fixed
type, the images of the maps $P_{w,q}\colon (G_q)^d\to G_q$ are
``asymptotically large''. Note, however, that for any individual
$G_q$ the image of $P_{w,q}$ may be very small: say, $w$ may be
identically 1 on $(G_q)^d$; moreover, even if this is not the case,
then, according to an observation of Kassabov and Nikolov \cite{KN}
(see also a subsequent paper of Levy \cite{Le1}), the image of $w$
may consist only of a single conjugacy class together with the
identity element. Recently Lubotzky \cite{Lu} proved that such a
phenomenon can happen in any finite simple group, for any conjugacy
class; Levy \cite{Le2} extended this result to some almost simple
and quasisimple groups.

Our main result (\thmref{equi}) provides a necessary and sufficient
condition on the word $w$ in two variables under which the 
morphism ${\mathbb P}_w\colon \SL_2\times \SL_2\to
\SL_2$ is almost equidistributed. This result can be viewed, on the
one hand, as a refinement (in the $\SL_2$-case) of equidistribution
theorems of Larsen and Pink \cite{LP}, Larsen and Shalev \cite{LS2},
Larsen, Shalev and Tiep \cite{LST} on general words $w$ and general
Chevalley groups $G$, and, on the other hand, as a generalization of
equidistribution theorems for some particular words: Garion and
Shalev \cite{GS} (commutator words on any $G$), Bandman, Garion and
Grunewald \cite{BGG} (Engel words on $\SL_2$), Bandman and Garion
\cite{BG} (positive words on $\SL_2$). As a consequence, we obtain a
somewhat surprising conclusion: if the word morphism as above has a
large image (in the sense that for almost all $q$ the image of
$P_{w,q}$ contains all noncentral semisimple elements of
$\SL(2,q)$), then it is almost equidistributed (in the terminology
of the preceding paragraph, ``many'' implies ``almost equally
many'').

Acting in the spirit of \cite{GS}, we deduce a criterion for
$w\colon \SL_2\times \SL_2\to \SL_2$ to be {\it almost
measure-preserving}.

Note that certain word maps are measure-preserving in a much
stron\-ger sense. Namely, if $w$ is {\it primitive}, i.e., is a part
of a basis of the free $d$-generated group $F_d$, then the
corresponding word map $G^d\to G$ is measure-preserving for {\it
every finite group} $G$, i.e., all fibres of this map have the same
cardinality. Only primitive words possess this property, this was
proven for $d=2$ by Puder \cite{Pu} and extended to arbitrary $d$ by
Puder and Parzanchevski \cite{PP}. (Note that the word map $P_{w}$
induced by a primitive word $w$ is obviously surjective.) It is well
known (see, e.g., Myasnikov and Shpilrain \cite{MS}) that primitive
words are asymptotically rare (negligible, in the terminology of
Kapovich and Schupp \cite{KS}). We are looking for criteria for
equidistribution for more general words.

The criteria we are talking about are formulated in terms of the
{\it trace polynomial} of the word $w$. It turns out (see our main
results in Section \ref{results}; they are proved in Section
\ref{proofs}) that ``good'' (equidistributed, measure-preserving)
words are essentially those whose trace polynomial cannot be
represented as a composition of two other polynomials. Since a
``bad'' trace polynomial tends to be the trace polynomial of some
power word (see Section \ref{sec:comp}), we conclude (see Section
\ref{sec:gen}) that within certain natural classes of words a
``random'' word is ``good'' (``good'' words, i.e., those whose trace
map is $p$-equidistributed for all but finitely many primes $p$,
form an exponentially generic set, in the sense of \cite{KS}).

\section{Main results}\label{results}

We start with precise definitions of notions described in the
introduction. We will follow the approach to equidistribution
adopted in \cite{GS}:

\begin{definition} (cf. \cite[\S 3]{GS}) \label{def:equifin}
Let $f\colon X\to Y$ be a map between finite non-empty sets, and let
$\vareps >0$. We say that $f$ is {\em $\vareps$-equidistributed} if
there exists $Y'\subseteq Y$ such that
\begin{enumerate}
\item[(i)] $\#Y' > \#Y(1-\vareps )$;
\item[(ii)] $|f^{-1}(y)-\frac{\#X}{\#Y}| < \vareps \frac{\# X}{\# Y}$ for all $y\in Y'$.
\end{enumerate}
\end{definition}

Our setting is as follows. Let a family of maps of finite sets
$P_q\colon X_q\to Y_q$ be given for every $q=p^n$. Assume that for
all sufficiently large $q$ the set $Y_q$ is non-empty. For each such
$q$ take $y\in Y_q$ and denote
$$
P_y=\{x\in X_q : P_q(x)=y\}.
$$

\begin{definition} \label{def:equimor-p}
Fix a prime $p$. With the notation as above, we say that the family
$P_q\colon X_q\to Y_q$, $q=p^n$, is {\em $p$-equidistributed} if
there exist a positive integer $n_0$ and a function
$\vareps_p\colon\BN\to\BN$ tending to $0$ as $n\to\infty$ such that
for all $q=p^n$ with $n>n_0$ the set $Y_q$ contains a subset $S_q$
with the following properties:
\begin{enumerate}
\item[(i)] $\#S_q < \vareps_p(q) \, (\#Y_q)$;
\item[(ii)] $|\#P_y-\frac{\#X_q}{\#Y_q}| < \vareps_p(q) \frac{\#X_q}{\#Y_q}$ for all $y\in Y_q\setminus S_q$.
\end{enumerate}
\end{definition}

\begin{remark}
Definition \ref{def:equimor-p} means that for $q=p^n$ large enough,
the map $X_q\to Y_q$ is $\vareps_p(q)$-equidistributed, in the sense
of Definition \ref{def:equifin}.
\end{remark}

\begin{definition} \label{def:equimor}
We say that the family $P_q\colon X_q\to Y_q$ is {\em
equidistributed} if it is $p$-equidistributed for all $p$ and there
exists a function $\vareps\colon\BN\to\BN$ tending to $0$ as
$n\to\infty$ such that for every $p$ and every $q=p^n$ large enough,
we have $\vareps_p(q)\le\vareps (q)$.
\end{definition}

Let us now consider the case where $Y_q=G_q$ is a Chevalley group
over $\BF_q$, $X_q=(G_q)^d$ is a direct product of its $d$ copies
($d\ge 2$ is fixed), and $P_q=P_{w,q}\colon (G_q)^d\to G_q$ is the
map induced by some fixed word $w\in F_d$: to each $d$-tuple
$(g_1,\dots ,g_d)$ we associate the value $w(g_1,\dots ,g_d)$.

In the present paper we focus our attention on a particular case
$d=2$, $G_q= \SL(2,q)$. It is convenient to view the maps
$P_{w,q}\colon \SL (2,q)\times \SL (2,q)\to \SL (2,q)$ as fibres of
the morphism  $\BP_w\colon \SL_{2,\BZ}\times \SL_{2,\BZ}\to
\SL_{2,\BZ}$ of group schemes over $\BZ$. We say that the morphism
$\BP_w$ (or, for brevity, the word $w$) is equidistributed (or
$p$-equidistributed) if so is the family $P_{w,q}$.

In such a situation, there is a natural way to associate to any word
$w=w(x,y)\in F_2$ its {\it trace polynomial}. This construction goes
back to the 19th century (Vogt, Fricke, Klein), see, e.g., \cite{Ho}
for a modern exposition. For $G=\SL(2,k)$ ($k$ is any commutative
ring with 1) denote by $\tr(w)\colon G^2\to G$ the trace character,
$(g_1,g_2)\mapsto \tr (w(g_1,g_2))$. Then $\tr (w)=f_w(s,u,t)$ where
$f_w \in \BZ[s,u,t]$ is an integer polynomial in three variables
$s=\tr (x)$, $u=\tr (xy)$, $t= \tr(y)$. We denote by the same
letters the induced morphisms of affine $\BZ$-schemes

$$f_w\colon \BA^3_{s,u,t}=\Spec \BZ[s,u,t]\to \BA^1_{z}=\Spec \BZ[z],$$
of affine $\ov{\BF}_p$-schemes:
$$f_{w,p}\colon \Spec \ov{\BF}_p[s,u,t]\to \Spec \ov{\BF}_p[z], $$
and also maps of sets of $\ov{\BF}_p$-points:
$$f_{w,p}\colon \BA^3_{s,u,t}(\ov{\BF}_p)\to \BA^1_{z}(\ov{\BF}_p)$$
(here  $\BA^N_{x_1,\dots, x_N}$ stands for affine space with
coordinates $x_1,\dots, x_N$).

Our criteria for equidistribution of $w$ will be formulated in terms
of the polynomial $f_w$. Some recollections and definitions on
polynomials are on order.

\begin{definition} \label{perm-pol}
Let $\BF$ be a finite field. We say that $h\in \BF[x]$ is a
permutation polynomial if the set of its values $\{h(z)\}_{z\in\BF}$
coincides with $\BF$.
\end{definition}

\begin{theorem} \cite[Theorem~7.14]{LN} \label{th:perm}
Let $q=p^n$. A polynomial $h\in \BF_q[x]$ is a permutation
polynomial of all finite extensions of $\BF_q$ if and only if
$h=ax^{p^k}+b,$ where $ a\ne 0$ and $k$ is a non-negative integer.
\end{theorem}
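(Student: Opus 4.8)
The plan is to prove the two implications separately. The ``if'' part is immediate: if $h(x)=ax^{p^k}+b$ with $a\ne 0$, then $h$ is the composition of the $k$-th iterate of the Frobenius $x\mapsto x^p$ — which is a field automorphism of $\ov{\BF}_p$, hence restricts to a bijection of every finite subfield — with the bijections $x\mapsto ax$ and $x\mapsto x+b$, so $h$ permutes every extension $\BF_{q^m}$. The substance is the ``only if'' part, and its main input is the classical Weil bound for the number of rational points on an absolutely irreducible plane curve over a finite field.

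So assume $h$ permutes $\BF_{q^m}$ for every $m\ge 1$; since these sets are finite, this just says $h$ is injective on each of them (in particular $h$ is nonconstant). First I would reduce to the case $h'\ne 0$: writing $h(x)=h_1(x^{p^j})$ with $j$ as large as possible, so that $h_1\in\BF_q[x]$ and $h_1'\ne 0$, and using that $x\mapsto x^{p^j}$ is a bijection of each $\BF_{q^m}$, one sees that $h_1$ also permutes all extensions; proving $h_1(x)=ax^{p^l}+b$ then gives $h(x)=ax^{p^{j+l}}+b$. Assuming now $h'\ne 0$, put $d=\deg h\ge 1$ and form
$$g(x,y)=\frac{h(x)-h(y)}{x-y}\in\BF_q[x,y],$$
a polynomial of degree $d-1$ in $x$ whose leading coefficient in $x$ equals the leading coefficient $a_d$ of $h$, and satisfying $g(x,x)=h'(x)\ne 0$. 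Injectivity of $h$ on $\BF_{q^m}$ is equivalent to the assertion that every $\BF_{q^m}$-point of the affine plane curve $C\colon g=0$ lies on the diagonal line $L\colon x=y$, while $C\cap L=\{\,h'(x)=0,\ y=x\,\}$ is a finite set.

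Next, suppose for contradiction that $C$ has an absolutely irreducible component $C_0\not\subseteq L$. As a component of a curve defined over $\BF_q$, $C_0$ is defined over some $\BF_{q^s}$, and the Weil bound gives $\#C_0(\BF_{q^{sN}})=q^{sN}+O(q^{sN/2})$, which for $N$ large exceeds $\#(C_0\cap L)$; this produces an $\BF_{q^{sN}}$-point of $C$ off $L$, contradicting injectivity of $h$ on $\BF_{q^{sN}}$. Hence every absolutely irreducible component of $C$ is contained in the line $L$; since $L$ is itself irreducible, $C$ is set-theoretically equal to $L$, and comparing degrees and leading coefficients in $x$ forces $g=a_d(x-y)^{d-1}$, i.e. $h(x)-h(y)=a_d(x-y)^d$. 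Putting $y=0$ yields $h(x)=a_dx^d+h(0)$, and substituting this back gives the identity $x^d-y^d=(x-y)^d$ in $\BF_q[x,y]$, which holds precisely when every intermediate binomial coefficient $\binom{d}{i}$ with $0<i<d$ is divisible by $p$ — that is, when $d=p^k$ for some $k\ge 0$. Therefore $h(x)=a_dx^{p^k}+h(0)$, which is the required form.

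The step I expect to be the main obstacle is the passage in the third paragraph from ``$C$ has only finitely many points over infinitely many fields $\BF_{q^{sN}}$'' to the rigid polynomial identity $g=a_d(x-y)^{d-1}$: this is exactly where the Weil estimate is essential. Everything surrounding it — the reduction to $h'\ne 0$, the factorization through $x-y$, and the final binomial-coefficient computation — is elementary.
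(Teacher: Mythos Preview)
The paper does not prove this statement at all: it is quoted as Theorem~7.14 of Lidl--Niederreiter \cite{LN} and used as a black box in the proof of \thmref{p-equi}. There is therefore no proof in the paper to compare your argument against.

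Your argument is correct. The reduction to $h'\ne 0$ by factoring out the largest $p$-power Frobenius, the passage to the curve $C\colon g(x,y)=(h(x)-h(y))/(x-y)=0$, the Weil estimate forcing every geometrically irreducible component of $C$ into the diagonal, and the identification $h(x)-h(y)=a_d(x-y)^d\Rightarrow d=p^k$ via vanishing of the interior binomial coefficients are all sound. Two small points you might make explicit: first, under the running hypothesis $h'\ne 0$, the conclusion $h(x)=a_d x^{p^k}+h(0)$ forces $k=0$, so the reduced polynomial is in fact linear, and combining with the preliminary factorization gives the original $h$ the shape $ax^{p^j}+b$ (this is implicit in your ``$h(x)=ax^{p^{j+l}}+b$'' but the reader may wonder where $l>0$ could come from); second, the degenerate case $d=1$, where $g$ is a nonzero constant and $C$ is empty, should be mentioned in a word, since your component argument is vacuous there.
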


The following notions are essential for our criteria.

\begin{definition}\label{def3}
Let $\BF$ be a field. We say that a polynomial $P\in \BF[x_1,\dots,
x_n]$ is  {\it $\BF$-composite} if there exist  $Q\in \BF[x_1,\dots,
x_n]$, $\deg Q\geq 1$, and $h\in \BF[z]$, $\deg h\geq 2$, such that
$P=h\circ Q.$ Otherwise, we say that $P$ is {\it
$\BF$-noncomposite}.
\end{definition}

Note that if $\mathbb E/\BF$ is a separable field extension, it is
known \cite[Theorem 1 and Proposition 1]{AP} that $P$ is
$\BF$-composite if and only if $P$ is $\mathbb E$-composite. In
particular, working over perfect ground fields, we may always
assume, if needed, that $\BF$ is algebraically closed.

\begin{definition}\label{def4}
Let $P\in \BZ[x_1,\dots, x_n]$. Fix a prime $p$.
\begin{itemize}
\item We say that $P$ is {\it $p$-composite} if
the reduced polynomial $P_p\in \BF_p[x_1,\dots, x_n]$ is
$\BF_p$-composite. Otherwise, we say that $P$ is {\it
$p$-noncomposite}.
\item We say that a $p$-composite polynomial $P$ is {\it $p$-special} if,
in the notation of Definition $\ref{def3}$, $P_p=h\circ Q$ where
$h\in \BF_p[x]$ is a permutation polynomial of all finite extensions
of $\BF_p.$
\end{itemize}
\end{definition}

\begin{definition}\label{def5}
We say that a polynomial  $P\in \BZ[x_1,\dots, x_n]$ is {\it almost
noncomposite} if for every prime $p$ it is either $p$-noncomposite
or $p$-special. Otherwise we say that $P$ is {\it very composite}.
\end{definition}

\begin{remark} \label{r1} If a polynomial  $P\in \BZ[x_1,\dots, x_n]$
is $\BQ$-noncomposite, it is $p$-noncomposite for all but finitely
many primes $p$ \cite[2.2.1]{BDN}. If $P\in \BZ[x_1,\dots, x_n]$ is
$\BQ$-composite, it is very composite.
\end{remark}

\begin{example}\label{ex:st-u}
Consider the family of Dickson polynomials ${\mathcal D}_n(x,a)$.
Denote $D_n(x)={\mathcal D}_n(x,1)$. We have $D_n(x)=2T_n(x/2)$
where $T_n(x)$ is the $n^{th}$ Chebyshev polynomial. If $n$ is not
prime then $D_n$ is very composite (see, e.g., Section
\ref{sec:comp} below). If $n=p$ is prime, then $D_n$ is almost
noncomposite and $p$-special since $D_p(x)=x^p$ in $\BF_p[x]$.
\end{example}

We can now formulate our main results.

\begin{theorem}\label{p-equi}
Let $w\in F_2$. The morphism $\BP_w\colon\SL_{2,\BZ}\times \SL_{2,\BZ}\to
\SL_{2,\BZ}$ is $p$-equidistributed if and only if the trace
polynomial $f_w$ is either $p$-noncomposite  or $p$-special.
\end{theorem}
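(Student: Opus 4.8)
The plan is to translate the equidistribution statement for the word map $\BP_w$ into a statement about the fibres of the trace polynomial $f_{w,p}$, and then analyze those fibres using the structure theory of $\SL(2,q)$. First I would recall the standard fibration of the word map over the trace map: for a fixed target $g\in\SL(2,q)$ with $\tr(g)=\tau$, the fibre $P_{w,q}^{-1}(g)$ maps to the set of triples $(s,u,t)\in\BA^3(\BF_q)$ with $f_w(s,u,t)=\tau$, and the relevant point is that the cardinality of $P_{w,q}^{-1}(g)$ is, up to a controlled error of lower order, proportional to the number of $\BF_q$-points on the fibre $f_{w,p}^{-1}(\tau)$ of the trace polynomial. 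This uses two ingredients that should be available from the earlier literature cited in the introduction (Larsen–Pink, Larsen–Shalev, and the $\SL_2$-specific work of Bandman–Garion–Grunewald and Bandman–Garion): (a) the fibres of the trace map $(g_1,g_2)\mapsto(\tr g_1,\tr(g_1g_2),\tr g_2)$ on $\SL(2,q)^2$ have size $\sim q^3$ with small relative error, uniformly over the relevant locus of triples; and (b) for a noncentral semisimple (or more generally regular) target $g$, the number of $(g_1,g_2)$ with $w(g_1,g_2)=g$ is governed by the trace identity, so that it is essentially $q^3$ times the normalized count of triples with $f_w=\tau$, again with error $O(q^{-1/2})$ or so coming from point counts on curves/surfaces (Lang–Weil).

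The second step is the heart of the argument: establish the dichotomy for $f_{w,p}$ on $\BA^1(\BF_q)$. Here I would use the Lang–Weil estimate to say that for a polynomial $g\in\BF_q[x_1,x_2,x_3]$ of degree $\le D$, the number of $\BF_q$-points in the fibre $g^{-1}(\tau)$ is $q^2+O_D(q^{3/2})$ for all $\tau$ in the image, provided the fibre is geometrically irreducible, and that the image omits at most $O_D(1)$ values when $g$ is $\overline{\BF_p}$-noncomposite — this is precisely the content that controls when "almost all fibres are of full size." Conversely, if $f_w=h\circ Q$ with $\deg h\ge 2$, then the fibre over $\tau$ is empty unless $\tau$ is a value of $h$, and since $h\colon\BF_q\to\BF_q$ is at most $\deg h$-to-one, the image of $f_{w,q}$ on $\BA^1(\BF_q)$ has size at most $(\deg h)^{-1}q + O(1)$, which is a \emph{positive-proportion} defect and destroys $p$-equidistribution — unless $h$ is a permutation polynomial of all finite extensions of $\BF_q$, i.e. by \thmref{th:perm} of the form $ax^{p^k}+b$. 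In the $p$-special case, $h$ being a bijection on every $\BF_{q}$ means the fibre structure of $f_w$ over $\BF_q$ coincides with that of $Q$ composed with a relabeling of the target, so no proportion of targets is lost and the fibre sizes are just permuted; thus $p$-equidistribution is preserved. This gives the "if" direction, and the contrapositive (very-composite at $p$, with a non-permutation outer factor) gives the "only if" direction.

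The remaining bookkeeping is to propagate the two error functions correctly through the chain: the $\vareps_p(q)$ controlling the bad-set $S_q\subset\SL(2,q)$ must absorb (i) the central and non-regular elements of $\SL(2,q)$, which number $O(q)$ out of $q^3$, hence contribute $O(q^{-2})$; (ii) the values of $z$ not hit by $f_{w,q}$, which number $O(1)$ in the noncomposite/$p$-special case and hence also contribute a negligible proportion of traces $\tau\in\BF_q$; and (iii) the Lang–Weil relative error $O(q^{-1/2})$ on each good fibre, which is $o(1)$ as $n\to\infty$ with $p$ fixed. These are all of the form $C_w\, q^{-\delta}$ with $\delta>0$ and $C_w$ depending only on the degree of $f_w$, so the function $\vareps_p$ can be taken of that shape.

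The main obstacle I anticipate is part (b) of the first step — precisely quantifying, with a \emph{uniform} error bound over all regular semisimple (and unipotent, and the mixed cases) targets $g\in\SL(2,q)$, that $\#P_{w,q}^{-1}(g)$ is $q^3\cdot(\text{normalized }\#f_{w,q}^{-1}(\tr g))$ up to a $q^{-1/2}$-type error. The trace coordinates do not quite parametrize $\SL(2,q)^2$ faithfully (there is the issue of reducible versus irreducible pairs $(g_1,g_2)$, and the center of the braid-type action), and one must check that these discrepancies live on a locus of size $O(q^2)$ and hence are harmless; handling the non-semisimple target classes and the small characteristics $p=2,3$ where the trace map degenerates will require the most care, and I would isolate this as a separate lemma, likely leaning on the $\SL_2$-specific fibre computations already carried out in \cite{BGG}, \cite{BG}.
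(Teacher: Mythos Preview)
Your overall architecture matches the paper's proof closely: the same commutative diagram reducing $P_{w,q}$ to $f_{w,q}$ via the trace projection $\pi$, the same use of Lang--Weil on the fibres $H_z=\{f_w=z\}$ together with the fibre counts for $\pi$ from \cite{BG}, and the same treatment of the $p$-special case by observing that a permutation $h$ simply relabels the fibres of the noncomposite $Q$. The bookkeeping you sketch (bad set $S_q$ of size $O(q^2)$, Lang--Weil error $O(q^{-1/2})$) is also essentially what the paper does, with explicit constants.

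There is, however, a genuine error in your ``only if'' direction. You write that since $h\colon\BF_q\to\BF_q$ is at most $\deg h$-to-one, the image of $f_{w,q}$ has size at most $(\deg h)^{-1}q+O(1)$. This inequality goes the wrong way: being at most $d_1$-to-one gives a \emph{lower} bound $\#\mathrm{Im}(h)\ge q/d_1$, not an upper bound, and in fact a non-permutation polynomial of degree $d_1$ can easily have image of size close to $q(1-1/d_1)$. What you need is the theorem of Wan and Wan--Shiue--Chen \cite{Wa,WSC}: if $h\in\BF_q[x]$ of degree $d_1$ is \emph{not} a permutation polynomial of $\BF_q$, then $h$ omits at least $(q-1)/d_1$ values. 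This is the input the paper invokes, and it is not elementary. You also need one more step the paper isolates as a separate case: $h$ may happen to be a permutation polynomial of some particular $\BF_q$ without being $p$-special; in that case one passes to an extension $\BF_{q^m}$ on which $h$ is not a permutation (such $m$ exists since $h$ is not of the form $ax^{p^k}+b$), and runs the omitted-values argument there. Once these two points are fixed, your proposal coincides with the paper's proof.
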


\begin{theorem}\label{equi}
Let $w\in F_2$. The morphism $\BP_w\colon\SL_{2,\BZ}\times \SL_{2,\BZ}\to
\SL_{2,\BZ}$ is equidistributed if and only if the trace polynomial
$f_w$ is almost noncomposite.
\end{theorem}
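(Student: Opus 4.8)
The plan is to deduce Theorem~\ref{equi} from the $p$-local Theorem~\ref{p-equi} by controlling the error functions $\vareps_p(q)$ uniformly in $p$. By Theorem~\ref{p-equi}, $\BP_w$ is $p$-equidistributed for every $p$ exactly when $f_w$ is, for every $p$, either $p$-noncomposite or $p$-special; by Definition~\ref{def5} this is precisely the statement that $f_w$ is almost noncomposite. So the only thing left to prove is that, under this hypothesis, the $\vareps_p(q)$ furnished by Theorem~\ref{p-equi} can be chosen so as to be dominated by a single function $\vareps(q)\to 0$ independent of $p$, as required by Definition~\ref{def:equimor}; and conversely that if $f_w$ fails to be almost noncomposite then $\BP_w$ is not equidistributed. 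The converse direction is immediate: if $f_w$ is very composite then by Definition~\ref{def5} there is a prime $p$ for which $f_w$ is $p$-composite but not $p$-special, hence by Theorem~\ref{p-equi} the family $P_{w,q}$ is not even $p$-equidistributed for that one $p$, so a fortiori not equidistributed.

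For the forward direction, first I would isolate the two sources of $p$-dependence in the error term. The first is the ``bad set'' $S_q\subseteq\SL(2,q)$, which in all the relevant cases consists of the central elements, the non-semisimple (unipotent-type) elements, and possibly a bounded number of exceptional semisimple classes; its size is $O(q)$ against $\#\SL(2,q)\sim q^3$, so it contributes an error of order $1/q$ with an absolute implied constant. The second, more delicate source is the fluctuation of $\#P_y$ around $\#X_q/\#Y_q = \#\SL(2,q)$ for good $y$: here one uses the trace parametrization, so that $\#P_y$ is governed by the number of $\ov\BF_p$-points (or $\BF_q$-points) of the fibre of $f_w$ over the trace of $y$, estimated via the Lang--Weil bounds. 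The key point is that the Lang--Weil error $\#V(\BF_q) = q^{\dim V} + O(q^{\dim V - 1/2})$ has an implied constant depending only on the degree and the number of variables of the defining equations --- and $f_w$, being a fixed polynomial over $\BZ$, has degree and variable count independent of $p$. Thus the geometric estimates that make the proof of Theorem~\ref{p-equi} work produce an $\vareps_p(q)$ of the form $C\, q^{-1/2}$ with $C$ \emph{absolute}, except possibly at the finitely many primes $p$ dividing the discriminants/leading coefficients governing the reduction behaviour of $f_w$.

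This brings me to what I expect to be the main obstacle: the finitely many ``bad'' primes $p$ at which the reduction $f_{w,p}$ behaves differently from the generic reduction (e.g.\ where $f_w$ switches from being $p$-noncomposite to $p$-special, or where the geometry of a fibre degenerates). For such a $p$, Theorem~\ref{p-equi} still gives $p$-equidistribution --- that is the content of allowing ``$p$-special'' in the hypothesis --- but the resulting $\vareps_p(q)$ comes from a possibly different geometric picture (one factors through a permutation polynomial $h$ and works with $Q$ instead of $f_w$) and could carry a larger implied constant. The remedy is that there are only finitely many such primes, and for each of them Theorem~\ref{p-equi} hands us a concrete $\vareps_p(q)\to 0$; so I would set
$$
\vareps(q) \;=\; \max\Bigl(C\,q^{-1/2},\ \max_{p\in S} \vareps_p(q)\Bigr),
$$
where $S$ is the finite set of bad primes and $C$ is the absolute constant from the good-prime analysis. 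Since $S$ is finite and each $\vareps_p\to 0$, this $\vareps(q)\to 0$, and by construction $\vareps_p(q)\le\vareps(q)$ for every $p$ and all large $q$, which is exactly Definition~\ref{def:equimor}. The one genuinely technical step inside this scheme is verifying that the Lang--Weil constant in the application to the fibres of $f_w$ (and of $Q$, at the special primes) really is controlled purely by $\deg f_w$ and the ambient dimension $3$, with no hidden $p$; this is standard but should be stated carefully, perhaps by invoking an effective version of Lang--Weil (e.g.\ as in the form used in \cite{LP}) in which the error constant is given explicitly in terms of degree and number of variables of the equations cutting out the variety.
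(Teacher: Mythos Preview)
Your proposal is correct and follows essentially the same route as the paper: deduce both directions from Theorem~\ref{p-equi}, with the forward direction hinging on the observation that the Lang--Weil constants in the proof of Theorem~\ref{p-equi} depend only on $\deg f_w$ and the ambient dimension, not on $p$. The paper records this explicitly in Remarks~\ref{uniform} and~\ref{r22}, which play exactly the role of your ``absolute constant $C$'' argument.

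One small difference worth noting: you handle the finitely many $p$-special primes by a finite-max construction, whereas the paper observes (Remark~\ref{r22}) that at a $p$-special prime one works with $Q$ in place of $f_w$, and since $\deg Q<\deg f_w$, the very same explicit bounds \eqref{parameters} (which are increasing in the degree) apply with the same constants. So no case distinction or finite max is needed at all: the single function $\vareps(q)=\max\{2(d+8)q^{-1},\,(100d^4+1)q^{-1/2}\}$ works uniformly for every prime, good or bad. Your finite-max device is perfectly valid and requires only the finiteness of the $p$-special set (which follows from Remark~\ref{r1}), but the paper's observation makes the argument marginally cleaner.
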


\begin{corollary} \label{cor-main}
Let $w(x,y)=x^{a_1}y^{b_1}\dots x^{a_r}y^{b_r}$ be a reduced word
such that we have $f_w(s,u,t)=D_r(q(s,u,t))$ over $\BQ$. Then
$w(x,y)=(x^{a_1}y^{b_1})^r$.
\end{corollary}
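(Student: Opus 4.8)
The plan is to analyze the hypothesis $f_w(s,u,t)=D_r(q(s,u,t))$ over $\BQ$ by first recording what is known about the trace polynomial of the specific word $w(x,y)=x^{a_1}y^{b_1}\cdots x^{a_r}y^{b_r}$. The key structural input is that the trace polynomial of a cyclic power $v^r$ is obtained by substituting the trace polynomial of $v$ into the Dickson polynomial $D_r$: writing $v=v(x,y)$ and $u_v=\tr(v(x,y))$, one has $\tr(v^r)=D_r(u_v)$, since in $\SL(2)$ the trace of the $r$-th power of a matrix $M$ equals $D_r(\tr M)$ (this is the Cayley--Hamilton recursion $\tr(M^{r+1})=\tr(M)\tr(M^r)-\tr(M^{r-1})$, which is exactly the defining recursion for $D_r$). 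Thus if $w=(x^{a_1}y^{b_1})^r$ we automatically get $f_w=D_r\circ q$ with $q=f_{x^{a_1}y^{b_1}}$. The corollary asserts the converse under the constraint that $w$ already has the special shape $x^{a_1}y^{b_1}\cdots x^{a_r}y^{b_r}$ with the same exponent block repeated conceptually --- so really what must be proved is that, among words of this syllable form with $r$ blocks, the only way the trace polynomial factors through $D_r$ is the obvious one.

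First I would reduce to a statement about the leading behaviour of $f_w$ in a suitable grading. Assign to $x$ and $y$ formal ``weights'' and track the top-degree part of $f_w(s,u,t)$ as a polynomial in $s,u,t$; for a positive word of syllable length $2r$ the trace polynomial has a well-understood dominant monomial, and the known computations (as in \cite{BG}, which is cited for positive words on $\SL_2$) give that the highest-degree term of $f_w$ in $u$ is $u^r$ with coefficient $1$, matching the leading term $x^r$ of $D_r$. Comparing $f_w=D_r(q)$ degree by degree forces $q$ to have degree exactly one in $u$ and leading term $u$ (up to the ambiguity absorbed by the affine normalization of $D_r$), so $q=u+(\text{lower order in }u)$. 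Then, peeling off one Dickson layer at a time, or equivalently using that $D_r$ as a map $\BA^1\to\BA^1$ has degree $r$ and is totally ramified only over $\pm 2$, I would argue that the fibre structure of $f_w$ over $\BQ$ must coincide with that of a genuine $r$-th power; concretely, the curve $\{f_w=c\}$ in $\BA^3$ must be a union (generically) of $r$ conjugate components permuted by the ``$r$-th root of $D_r$'' monodromy, and this rigidity pins down $w$.

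The cleanest route for the last step is probably to use the dynamical/combinatorial description of how syllables contribute to the trace polynomial: expand $\tr(w)$ via the standard transfer-matrix expansion over subsets of the $2r$ syllables (the Fricke--Vogt calculus), identify which subsets give the top monomials, and show that the requirement ``$f_w$ factors as $D_r\circ q$ over $\BQ$'' is equivalent to a system of polynomial identities in the exponents $a_i,b_i$ whose only solution is $a_1=\cdots=a_r$, $b_1=\cdots=b_r$. Once that is established, the word is literally $(x^{a_1}y^{b_1})^r$ as claimed. Alternatively --- and this may be shorter --- one can invoke the structure theory already built up in Section~\ref{sec:comp} of the paper: ``bad'' (very composite) trace polynomials are classified there, and the only ones of the form $D_r\circ q$ with $D_r$ the outer factor come from power words; the corollary is then a corollary in the literal sense, obtained by intersecting that classification with the class of syllable-form words. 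I would state it this way if the Section~\ref{sec:comp} results are strong enough to quote directly.

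The main obstacle I anticipate is controlling the \emph{lower-order} terms of the putative inner polynomial $q$: matching leading terms is easy, but ruling out exotic $q$ (for instance $q=u+(\text{genuinely trivariate correction})$) whose composite with $D_r$ accidentally equals $f_w$ for a non-power $w$ requires knowing the trace polynomial $f_w$ precisely enough --- not just its leading term --- and then either a unique-factorization-type argument for polynomial composition (using that $D_r$ is indecomposable when... --- careful: $D_r$ is decomposable for composite $r$, so one really needs the \emph{specific} $D_r$ on the outside, and the argument must exploit $q$'s low $u$-degree to block re-decomposing) or an explicit induction on $r$ dividing out one syllable-cycle at a time. Getting this bookkeeping tight, and making sure the reduction from ``$\BQ$-composite with outer factor $D_r$'' to ``inner factor of $u$-degree $1$'' is valid even when $r$ is composite, is where the real work lies.
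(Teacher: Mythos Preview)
Your proposal contains a factual error that undermines the plan: the coefficient of $u^r$ in $f_w$ is \emph{not} $1$ but $G_r(s,t)=\prod_{i=1}^r g_{a_i,b_i}(s,t)$ (see \propref{prop:BG}), a polynomial of bidegree $(\bar A-r,\bar B-r)$ in $(s,t)$. From $f_w=D_r(q)$ with $\deg_u q=1$, say $q=uH_1+H_0$, matching top terms yields $G_r=H_1^{\,r}$, and this $r$-th-power constraint is the actual content of the ``leading term'' step. The paper (\propref{sin}) exploits it by specializing to $s=t$ and substituting $x\mapsto x^N$; via the factorization \eqref{fab} one reads off from root multiplicities that each value $|a_i|$ occurs a multiple of $r$ times among $|a_1|,\dots,|a_r|$, hence all $|a_i|$ coincide, and likewise all $|b_j|$. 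This establishes the trace-similarity $w\approx(x^{a_1}y^{b_1})^r$, and your sketch does not contain this argument.

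For the second step---upgrading trace-similarity to equality---the paper (\propref{trsim} via \lemref{tr}) passes to $\tilde u=\tr(x^ay^b)$, $\tilde s=D_a(s)$, $\tilde t=D_b(t)$ and shows, by an induction on complexity using $\tr(v_1v_2)=\tr(v_1)\tr(v_2)-\tr(v_1v_2^{-1})$, that $f_w=\epsilon\tilde u^r-\epsilon m\,\tilde s\tilde t\,\tilde u^{r-1}+\cdots+g(\tilde s,\tilde t)$ with $m\ge 0$, $m=0\iff w=(x^ay^b)^r$, and, crucially, $\deg g<2r$. Writing $q=\alpha\tilde u+\beta$ and comparing with $D_r(q)$, the $\tilde u^{r-1}$ term forces $\beta\propto\tilde s\tilde t$, whereupon the constant term of $D_r(q)$ would have total degree $2r$ in $(\tilde s,\tilde t)$ unless $\beta=0$; the strict bound $\deg g<2r$ then gives $\beta=0$, hence $m=0$. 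You correctly flag this ``lower-order'' control as the obstacle, but the specific handle---the degree inequality on the constant term---is absent from your plan, and neither the monodromy idea nor a generic Fricke--Vogt expansion supplies it. Finally, your alternative of invoking Section~\ref{sec:comp} is circular: \corolref{cor-main} \emph{is} \corolref{c=n} there.
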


For a given word $w\in F_2$, let us now consider the family of
groups $\hat G_q=\PSL(2,q)$ and the corresponding word maps $\hat
P_{w,q}\colon \hat G_q\times \hat G_q\to \hat G_q$.

\begin{prop} \label{SL-PSL}
If the morphism $\BP_w\colon\SL_{2,\BZ}\times \SL_{2,\BZ}\to
\SL_{2,\BZ}$ is equidistributed $($or $p$-equidistributed$)$, then
so is the family $\hat P_{w,q}\colon \hat G_q\times \hat G_q\to \hat
G_q$.
\end{prop}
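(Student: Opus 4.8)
The plan is to relate equidistribution of the word map on $\SL_2$ to that on $\PSL_2$ via the quotient by the center $Z=\{\pm I\}$. Write $\pi\colon\SL(2,q)\to\PSL(2,q)$ for the natural projection, and likewise $\pi^{(2)}\colon\SL(2,q)^2\to\PSL(2,q)^2$. The key elementary observation is that the word map commutes with these projections: $\hat P_{w,q}\circ\pi^{(2)}=\pi\circ P_{w,q}$, since $w$ is a word and $Z$ is central (so the image of a coset $gZ$ under $w$ is determined modulo $Z$). Thus each fiber $\hat P_y$ over $y\in\PSL(2,q)$ decomposes as $(\pi^{(2)})^{-1}$ applied to... more precisely, $\#\hat P_y=\frac{1}{|Z|^2}\sum_{z\in Z}\#P_{\tilde y z}$ where $\tilde y$ is any lift of $y$; when $q$ is odd, $|Z|=2$ and $\#\hat P_y=\tfrac14(\#P_{\tilde y}+\#P_{-\tilde y})$, while for $q$ even $\SL(2,q)=\PSL(2,q)$ and there is nothing to prove.

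First I would dispose of the even characteristic case in one line. Then, fixing an odd prime $p$ and assuming $\BP_w$ is $p$-equidistributed, I would take the function $\vareps_p$ and the exceptional sets $S_q\subseteq\SL(2,q)$ provided by Definition~\ref{def:equimor-p}. Define $\hat S_q=\pi(\{g\in\SL(2,q): g\in S_q \text{ or } -g\in S_q\})$; this has $\#\hat S_q\le\#S_q<\vareps_p(q)\#\SL(2,q)=\vareps_p(q)\cdot 2\#\PSL(2,q)$, so up to the harmless constant $2$ it is still negligible (absorb the $2$ by replacing $\vareps_p$ with $2\vareps_p$, which still tends to $0$). For $y\in\PSL(2,q)\setminus\hat S_q$, both lifts $\tilde y,-\tilde y$ lie outside $S_q$, so $|\#P_{\pm\tilde y}-\tfrac{\#\SL(2,q)^2}{\#\SL(2,q)}|<\vareps_p(q)\tfrac{\#\SL(2,q)^2}{\#\SL(2,q)}$. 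Since $\#\SL(2,q)^2/\#\SL(2,q)=\#\SL(2,q)=4\cdot\#\PSL(2,q)^2/\#\PSL(2,q)\cdot\tfrac{1}{\# \PSL(2,q)}\cdot\#\PSL(2,q)$... let me just say: $\#\hat G_q^2/\#\hat G_q=\#\hat G_q=\#\SL(2,q)/2$, and averaging the two inequalities for $\#P_{\tilde y}$ and $\#P_{-\tilde y}$ and dividing by $4$ gives $|\#\hat P_y-\tfrac{\#\hat G_q^2}{\#\hat G_q}|<\vareps_p(q)\tfrac{\#\hat G_q^2}{\#\hat G_q}$. Hence the family $\hat P_{w,q}$ is $p$-equidistributed with the same $\vareps_p$ (up to the constant factor $2$ in condition (i)). For the global ("equidistributed") statement one simply notes that the uniform bound $\vareps_p(q)\le\vareps(q)$ of Definition~\ref{def:equimor} is inherited verbatim, since we modified each $\vareps_p$ only by the universal constant $2$.

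The only real point requiring care is the bookkeeping of cardinalities and the verification that $\#\hat G_q^2/\#\hat G_q$ behaves as expected under the averaging identity $\#\hat P_y=\tfrac14(\#P_{\tilde y}+\#P_{-\tilde y})$; this is where I expect the slight subtlety, because one must check that the "main term" $\#X_q/\#Y_q$ for the $\PSL_2$ family is exactly a quarter of that for the $\SL_2$ family, so that the relative error $\vareps_p(q)$ transfers without change. Everything else is a formal consequence of the fact that $w$ descends to the quotient by a central subgroup, so the word map is compatible with $\pi$. I would write this up as: (1) reduce to $q$ odd; (2) record the commuting square and the averaging formula for fiber sizes; (3) transport the exceptional set and the estimate; (4) observe the uniformity in $p$ is preserved.
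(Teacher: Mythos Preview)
Your proposal is correct and follows essentially the same approach as the paper: both use the commuting square between the $\SL_2$ and $\PSL_2$ word maps, the averaging identity $\#\hat P_y=\tfrac14(\#P_{\tilde y}+\#P_{-\tilde y})$ for odd $q$, and transport the exceptional set via $\pi$. Your treatment is in fact slightly more careful than the paper's---you handle even $q$ explicitly and track the factor $2$ in the bound on $\#\hat S_q$, whereas the paper writes $\#\hat S_q=\#S_q/2$ (which tacitly assumes $S_q$ is stable under $g\mapsto -g$), but these are cosmetic differences only.
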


\section{Proofs} \label{proofs}

Fix a word $w$ in $F_2.$  We slightly change the general notation,
and for a group $\G$ and $g\in\G$ we denote
$$W_{g,\G}=\{(x,y)\in \G\times\G :w(x,y)=g\}.$$
We will omit the subscript $\G$ when no confusion may arise. For
$\G=G_q=\SL (2,q)$ we denote this set by $W_{g,q}$ (or just $W_g$).

Since $\#G_q=q(q^2-1)$, we will replace, if needed, $\#G_q$ by $q^3$
in all asymptotic estimates.

\begin{proof}[Proof of Theorem $\ref{p-equi}$]

Slightly rephrasing Definition \ref{def:equimor-p}, we are going to
prove that there exist positive numbers $n_0$, $A$, $B$, $\alpha$,
$\beta$, all independent of $g\in G_q$, such that for every
$q>q_0=p^{n_0}$ there exists $S_q\subset G_q$ with the following
properties:
\begin{equation} \label{Gq:equi}
\begin{aligned}
{\text{\rm{(i) }}} &\#S_q/q^3< Aq^{-\alpha};\\
{\text{\rm{(ii) }}} &{\text{\rm{ for every }}}g\in T_q:=G_q\setminus
S_q {\text{\rm{ we have }}}
\left|\frac{\#W_{g,q}}{q^3}-1\right|<Bq^{-\beta}.
\end{aligned}
\end{equation}
Indeed, this is enough for proving that $w$ is $p$-equidistributed:
in Definition \ref{def:equimor-p} one can then take
$\vareps_p(q):=\max\{Aq^{-\alpha}, Bq^{-\beta}\}.$

Towards this end, we will use the following commutative diagram:
\begin{equation} \label{diagram}
\begin{CD}
G_q\times G_q @>P_{w,q}>> G_q \\
@V\pi VV            @VV\tr V \\
\BA_{s,u,t}^3(\BF_q)       @>f_{w,q}>>  \BA_z^1(\BF_q)
\end{CD}
\end{equation}
where
\begin{equation} \label{pi}
\pi (x,y)=(\tr (x), \tr(xy), \tr (y)).
\end{equation}

``Typical'' fibres of the maps in this diagram should consist of
$O(q^3)$ elements (for $P_{w,q}$ and $\pi$), and of $O(q^2)$
elements (for $\tr$ and $f_{w,q}$). Below we will show how to attain
this with error term of order $O(q^{-\beta})$ by throwing away
$O(q^{-\alpha})$ elements.

\medskip

We will use an explicit Lang--Weil estimate of the following form:
if $H\subset \BA^3_{\BF_q}$ is
an {\it absolutely irreducible} hypersurface of degree $d$, then
$$
|\#H(\BF_q)-q^2|\leq (d-1)(d-2)q^{3/2}+12(d+3)^4q
$$
(see, e.g., \cite[Remark~11.3]{GL}), or, equivalently,
$\#H(\BF_q)=q^2(1+r_1)$ with
\begin{equation}\label{est0}
|r_1|\leq q^{-1/2}[(d-1)(d-2)+12(d+3)^4q^{-1/2}].
\end{equation}

(The remainder term $r_1=r_1(H)$, as well as all remainder terms in
the sequel, depend on the hypersurface under consideration. To ease
the notation, we do not include this dependence in formulas.)

For $d>4 $ and $q>16,$  equation \eqref{est0} gives
\begin{equation}\label{est01}
|r_1| < q^{-1/2}(d^2+12\cdot2^4d^4/4) < d^4 q^{-1/2}(1/d^2+48) < 50d^4 q^{-1/2}.
\end{equation}
Moreover, if $d>4 $ and $q > 4(50d^4)^2,$ then $|r_1| < 1/2.$
This remains true also for $d\le 3$. Without loss of generality, we
may and will assume that the latter inequality is valid.

\medskip

{\bf Step 1.  Suppose that the polynomial $f_w$ is $p$-noncomposite.}

\medskip

Denote the degree of $f_w$ by $d$, the degree of the reduced
polynomial $f_{w,p}$ is then at most $d$. Consider the corresponding
reduced map $f_{w,p}\colon \BA^3_{s,u,t}(\ov{\BF}_p)\to
\BA^1_{z}(\ov{\BF}_p)$.

Denote by $\sigma (f_{w,p})$ the spectrum of $f_{w,p}$, i.e., the
set of all points $z\in \BA^1_{z}(\ov{\BF}_p)$ such that the
hypersurface $H_z\subset\BA^3_{s,u,t}(\ov{\BF}_p)$, defined by the
equation $f_w(s,u,t)=z$, is reducible. By a generalized
Stein--Lorenzini inequality \cite{Na}, this set contains at most
$d-1$ points. The same is true for each $\sigma_q(f_w):=\sigma
(f_{w,p})\cap \BF_q$. Without loss of generality, we may and will
assume that $\pm 2$ are inside $\sigma_q(f_w)$ (by enlarging
$\#\sigma_q(f_w)$ to $d+1$).

Let $z\in\BA^1_{z}(\ov{\BF}_p)\setminus  \sigma(f_{w,p}).$ Then
$H_z$ is an irreducible hypersurface and hence \eqref{est0}, \eqref{est01}
are valid for $H_z.$

\begin{Lemma}\label{BG}
Let $H \subset\BA^3_{s,u,t}(\ov{\BF}_p)$ be a hypersurface of degree
$d$. Let $D(s,u,t)=(t^2-4)(s^2-4)(s^2+t^2+u^2-ust-4)$, and let
$\Delta\subset\BA^3_{s,u,t}$ be defined by the equation $D=0$.
Assume that $H\not\subset  \Delta.$ Then $($see $\eqref{pi})$ we
have $\#\pi^{-1}(H)(\BF_q)=\#H(\BF_q) q^3(1+r_2),$ where $|r_2| <
157 d/q.$
\end{Lemma}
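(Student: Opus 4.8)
The plan is to understand the fibers of the map $\pi\colon G_q\times G_q\to \BA^3_{s,u,t}(\BF_q)$ defined by $\pi(x,y)=(\tr x,\tr(xy),\tr y)$ and show that over the complement of the discriminant locus $\Delta$ these fibers have essentially constant cardinality $q^3$, up to a relative error of order $O(d/q)$. First I would recall the classical fact (going back to Fricke--Vogt, see \cite{Ho}) that $(s,u,t)$ are coordinates on the GIT quotient of $\SL_2\times \SL_2$ by simultaneous conjugation, and that the generic fiber of $\pi$ is a single $\PGL_2$-orbit. More precisely, the condition $D(s,u,t)\ne 0$ is exactly the condition that: (a) neither $x$ nor $y$ is central ($s\ne\pm 2$, $t\ne\pm 2$ would be too strong, but one wants $x,y$ not both fixing a common vector), and (b) the pair $(x,y)$ is irreducible, i.e., generates a subgroup acting irreducibly on $\BF_q^2$ — the factor $s^2+t^2+u^2-ust-4$ vanishing is precisely the reducibility condition. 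So the strategy is: away from $\Delta$, the fiber $\pi^{-1}(s,u,t)$ is a torsor under $\PGL_2(\BF_q)$ (possibly twisted), hence has cardinality exactly $\#\PGL_2(\BF_q)=q(q^2-1)=q^3(1+O(1/q))$; then $\#\pi^{-1}(H)(\BF_q)$ is a sum of $\#(H\setminus\Delta)(\BF_q)$ such fibers, plus the contribution from $H\cap\Delta$, which I must show is negligible.

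The key steps, in order: (1) For $(s,u,t)$ with $D(s,u,t)\ne 0$, construct an explicit pair $(x_0,y_0)$ with $\pi(x_0,y_0)=(s,u,t)$ — e.g.\ take $x_0$ in a convenient normal form (diagonal or companion matrix with trace $s$) and solve for $y_0$ — and show that every pair in the fiber is conjugate to $(x_0,y_0)$ by a unique element of $\PGL_2$. The uniqueness follows because an irreducible pair has trivial centralizer in $\PGL_2$. This gives $\#\pi^{-1}(s,u,t)(\BF_q)=\#\PGL_2(\BF_q)$, or possibly the order of a twisted form, which in any case equals $q^3(1+r)$ with $|r|\le c/q$ for an absolute constant $c$. (2) Bound $\#(H\cap\Delta)(\BF_q)$: since $H\not\subset\Delta$ by hypothesis, $H\cap\Delta$ is a proper closed subset of $H$, of dimension at most $1$, so it has $O(dq)$ points (a curve of degree $O(d)$ in $\BA^3$ has at most $O(d^2 q)$ points, but intersecting with the bounded-degree $\Delta$ keeps the degree controlled; one can cite a crude Lang--Weil or Schwartz--Zippel-type bound). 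Over these bad points the fibers of $\pi$ have cardinality at most $O(q^4)$ (the whole of $G_q\times G_q$ has $q^6$ points and a positive-codimension locus is smaller, but even the trivial bound $\#(G_q)^2=O(q^6)$ divided appropriately suffices if one is careful), so the total bad contribution is $O(dq)\cdot O(q^4)=O(dq^5)$, which against the main term $\#H(\BF_q)\cdot q^3 \asymp q^2\cdot q^3=q^5$ is of relative size $O(d/q)$. (3) Combine: $\#\pi^{-1}(H)(\BF_q)=\sum_{P\in (H\setminus\Delta)(\BF_q)}\#\pi^{-1}(P)(\BF_q)+\sum_{P\in(H\cap\Delta)(\BF_q)}\#\pi^{-1}(P)(\BF_q) = (\#H(\BF_q)-O(dq))\cdot q^3(1+O(1/q))+O(dq^5)=\#H(\BF_q)\,q^3(1+r_2)$ with $|r_2|<157d/q$ after tracking the absolute constants.

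The main obstacle is Step (1): getting the fiber cardinality exactly right (not just up to a constant factor), including the possibility that the fiber is a \emph{twisted} $\PGL_2$-torsor whose $\BF_q$-points number $q(q^2-1)$ regardless of the twist, and handling the precise geometry of which $(s,u,t)$ correspond to which type (split vs.\ non-split) of pair. This is essentially the content of the classical theory of the character variety of $F_2$ into $\SL_2$, and the cleanest route is to quote the explicit parametrization: for $D\ne 0$ one solves the trace equations by hand, exhibits the pair, computes its $\PGL_2$-centralizer, and invokes the orbit--stabilizer count over $\BF_q$ together with the vanishing of the relevant $H^1$ (or simply the direct observation that $\#\{g\in\PGL_2(\BF_q): g\cdot(x_0,y_0)=(x_1,y_1)\}$ is either empty or a coset of the trivial centralizer, and summing over the finitely many $\BF_q$-forms recovers the full count $q^3+O(q)$). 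The bound on $H\cap\Delta$ in Step (2) is routine once one fixes a suitable effective Lang--Weil/Bézout statement for curves, and Step (3) is pure bookkeeping of constants.
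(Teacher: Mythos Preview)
Your overall strategy --- split $H$ into $H\setminus\Delta$ and $H\cap\Delta$, use that fibers over the good part are essentially $\PGL_2$-orbits of size $q^3(1+O(1/q))$, and bound the bad part via B\'ezout --- is exactly the paper's approach. The paper simply quotes \cite[Proposition~7.2]{BG} for the two fiber estimates
\[
\#\pi^{-1}(s,u,t)(\BF_q)=q^3(1+\delta_1),\ |\delta_1|\le 3/q\quad\text{off }\Delta,\qquad
\#\pi^{-1}(s,u,t)(\BF_q)\le 2q^3(1+1/q)\quad\text{on }\Delta,
\]
rather than rederiving them from the character-variety picture as you propose; your sketch of how to obtain the first bound (irreducible pair $\Rightarrow$ trivial centralizer $\Rightarrow$ $\PGL_2$-torsor, which is trivial over $\BF_q$ by Lang) is correct.

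There is, however, a genuine arithmetic slip in your Step~(2)--(3). You bound the fibers over $H\cap\Delta$ by $O(q^4)$ and then write that the bad contribution $O(dq)\cdot O(q^4)=O(dq^5)$ has relative size $O(d/q)$ against the main term $\asymp q^5$. But $dq^5/q^5=d$, not $d/q$: with an $O(q^4)$ fiber bound the lemma fails. What is needed --- and what the paper uses --- is the sharper bound $\#\pi^{-1}(s,u,t)(\BF_q)=O(q^3)$ uniformly, including over $\Delta$. This is not hard (for fixed $x$ with $\tr x=s$ the conditions $\tr y=t$, $\tr(xy)=u$, $\det y=1$ cut out a curve in $y$, giving $O(q)$ choices, and there are $O(q^2)$ choices of $x$), but it is essential: once you have $O(q^3)$ the bad contribution becomes $O(dq)\cdot O(q^3)=O(dq^4)$, and the relative error is indeed $O(d/q)$. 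After that, tracking the constants (B\'ezout gives $\deg(H\cap\Delta)\le 7d$, hence $\le 7d(q+1)$ points; the bad-fiber constant is $\le 2(1+1/q)$; and one uses $\#H(\BF_q)\ge q^2/2$ from the ambient Lang--Weil estimate) yields the explicit $157d/q$.
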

\begin{proof}

We use the following fact (see \cite[Proposition 7.2]{BG}):
$$
\#\pi^{-1}(s,u,t)(\BF_q)=q^3(1+\delta_1(s,u,t)),  \ |\delta_1|\leq
3/q,
$$
if $(s,u,t)\not \in\Delta (\BF_q)$, and
$$
\#\pi^{-1}(s,u,t)(\BF_q)\leq 2q^3(1+1/q)
$$
if $(s,u,t) \in\Delta (\BF_q). $

Denote $H\cap \Delta$ by $H_{\Delta}.$ By Bezout's theorem, this is
a curve of degree at most $7d, $ hence $\#H_{\Delta}(\BF_q)\leq
7d(q+1).$  We have
$$
\begin{aligned}
\#\pi^{-1}(H)(\BF_q) & =\#\pi^{-1}(H \setminus
H_{\Delta})(\BF_q)+\#\pi^{-1}(H_{\Delta})(\BF_q) \\
& \le \#(H \setminus H_{\Delta})(\BF_q) q^3(1+\al_1)+
\#H_{\Delta}(\BF_q) q^3\al_2,
\end{aligned}
$$
where $|\al_1|\leq 3/q$ and $|\al_2|\leq 2(1+1/q)\leq 3.$ Thus
$$
\frac{\#\pi^{-1}(H)(\BF_q)}{\#H(\BF_q)} =
q^3\left[\left(1-\frac{\#H_{\Delta}(\BF_q)}{\#H(\BF_q)}\right)(1+\al_1)+\frac{\#H_{\Delta}(\BF_q)}{\#H(\BF_q)}\al_2\right]
= q^3(1+r_2)
$$ with
$$
\begin{aligned}
|r_2| & \leq \frac{\#H_{\Delta}(\BF_q)}{\#H(\BF_q)}(1+|\al_1| +
|\al_2|)+ |\al_1|
\leq \frac{7d(q+1)}{q^2(1+r_1)} (1+|\al_1|+ |\al_2|)+ |\al_1| \\
& \leq \frac{7d\cdot 2q\cdot (11/2)}{q^2/2} + \frac{3}{q}\leq
\frac{157d}{q}.
\end{aligned}
$$
\end{proof}

Let $S'_q$ be the set of all $z\in\BF_q$ such that $ H_z\subset
\Delta$ (see \lemref{BG}). This set is finite, and $\#S_q'\leq 7$
since $\Delta $ is of degree $7$ and thus cannot contain more than
$7$ irreducible components.

Let $\tau\colon  G_q\to\BA^1$ be the trace map,  $\tau(g)=\tr(g).$
We have $\#\tau^{-1}(z)\le q(q+1).$

We define $\tilde {S_q}:= \sigma_q(f_w)\cup S'_q$ and
$S_q:=\tau^{-1}(\tilde {S_q}).$ By construction,
$$
\#S_q\leq (d+8)q(q+1)\leq q^3\frac{2(d+8)}{q}.
$$
According to \lemref{BG}, for any $z\in T_q$ we have
$$\#\pi^{-1}(H_z)(\BF_q)=\#H_z(\BF_q)q^3(1+r_2)=q^5(1+r_1)(1+r_2).$$
On the other hand, all $g\in G_q$ with $\tr(g)=z\in T_q$ are
conjugate, and there are $\#\tau^{-1}(z)=q(q\pm 1)$ such elements.
Hence for every such $g$ (see diagram (\ref{diagram})), we have
$$
\#W_g=\frac{\#\pi^{-1}(H_z)(\BF_q)}{q(q\pm 1)}=\frac{q^5(1+r_1)(1+r_2)}{q(q\pm 1)}=q^3(1+r_3)
$$
with
$$|r_3|\leq
2(|r_1|+|r_2|+|r_1r_2|)\leq
2|r_1|+3|r_2|.$$

Recall that  $q\geq  4(50d^4)^2,$ hence
$$|r_3|\leq 2\cdot 50d^4q^{-1/2}+3\cdot 157d/q \leq q^{-1/2}(100d^4+1).$$

So for $q>q_0=4(50d^4)^2$, in equation (\ref{Gq:equi}) we can take
\begin{equation}\label{parameters}
A=2(d+8), \al =1, B=100d^4+1, \beta =1/2.
\end{equation}

Thus $f_w$ is $p$-equidistributed.

\begin{remark} \label{uniform}
Note that $q_0$ and all numbers in \eqref{parameters} depend only on
$w$ (through $d$, the degree of the trace polynomial $f_w$) and not
on $p$.
\end{remark}

{\bf Step 2. Suppose that the polynomial $f_w$ is $p$-composite.}

This means that $f_w(s,u,t)=h(Q(s,u,t))$ where $h\in \BF_p[x]$ is a
polynomial in one variable of degree $d_1\geq 2$ and $Q\in
\BF_p[s,u,t]$ is a noncomposite polynomial in three variables.

Consider three separate cases.

\medskip

{\bf Case 1.} $f_w$ is $p$-special, i.e., $h$ is a permutation
polynomial of all fields $\BF_q$, $q=p^n.$ For any $z\in\BF_q$ there
is a unique $x\in\BF_q$ such that the hypersurface
$H_z\subset\BA^3$, defined by the equation $f_w(s,u,t)=z$, coincides
with  the hypersurface $\tilde H_x$, defined by the equation
$Q(s,u,t)=x$. Since $Q$ is noncomposite, {\bf Step 1} implies that
$w$ is $p$-equidistributed in this case.

\begin{remark}\label{r22}
In this case, the parameters $q_0$, $A$, $B$, $\al$, $\beta$ also do
not depend on $p$. They depend on the word $w,$ this time through
the degree of $Q$ which is less than the degree of the trace
polynomial of $w.$
\end{remark}

{\bf Case 2.}   $h$ is not a permutation polynomial for  $\BF_q$,
$q=p^n.$ Then it is not a permutation polynomial for any extension
$\BF_{q^m}$ of $\BF_q.$

According to \cite{Wa}, \cite{WSC}, there exists a subset
$U_m\subset\BA^1_{z}(\BF_{q^m})$  such that
\begin{itemize}
\item $ \#U_m\geq (q^m-1)/d_1;$
\item   $h^{-1}(s)(\BF_{q^m})=\emptyset$ for every $s\in  U_m.$
\end{itemize}

It follows that $f_w^{-1}(s)(\BF_{q^m})=\emptyset$ for every $m$ and
every $s\in U_m$.  So the polynomial $\pi\circ f_{w,q^m}$ also omits
at least $(q^m-1)/d_1$ values, and hence so does $P_{w,q^m}\circ
\tr$ (see diagram (\ref{diagram})), i.e., $P_{w,q^m}(G_{q^m}\times
G_{q^m})$ contains no elements $g\in G_{q^m}$ with $\tr(g)\in  U_m.$
For every $s\in\BF_{q^m}$, $s\ne \pm 2$, the group $G_{q^m}$
contains at least $(q^m)^2-q^m$ elements with trace $s.$ Thus
$P_{w,q^m}$ omits at least
$$q^m(q^m-1)[(q^m-1)/d_1-2]\approx (q^m)^3/d_1$$ values. Hence $w$ is not
$p$-equidistributed.

\medskip

{\bf Case 3.} $h$ is a permutation polynomial for $\BF_q$ but not
for an extension $\BF_{q^m}.$  Then we can start with $\BF_{q^m}$
and proceed as in {\bf Case 2.}

Theorem \ref{p-equi} is proved.
\end{proof}

\begin{proof}[Proof of Theorem $\ref{equi}$]
If $f_w$ is  almost noncomposite, then, according to Remarks
\ref{uniform} and \ref{r22}, the word $w$ is equidistributed.

If $f_w$ is  very composite, then for some $p$ it is $p$-composite
but not $p$-special and, by \thmref{p-equi}, the word $w$ is not
$p$-equidistributed. Hence it is not equidistributed.
\end{proof}

\begin{corollary}\label{surj}
Suppose that for each $p$ and all $n$ big enough the image of the map 
$P_{w,p^n}\colon \SL(2,p^n)\times \SL(2,p^n)\to \SL(2,p^n)$ contains
all noncentral semisimple elements of $\SL(2,p^n)$. Then $w$ is
equidistributed.
\end{corollary}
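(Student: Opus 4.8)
The plan is to contrapose: assume $w$ is \emph{not} equidistributed and produce, for some prime $p$ and infinitely many $n$, a noncentral semisimple element of $\SL(2,p^n)$ outside the image of $P_{w,p^n}$. By \thmref{equi}, non-equidistribution of $w$ means $f_w$ is very composite, so by \defnref{def5} there is a prime $p$ for which $f_w$ is $p$-composite but not $p$-special. Thus $f_{w,p}=h\circ Q$ with $\deg h=d_1\ge 2$ and $h$ not a permutation polynomial of all finite extensions of $\BF_p$ (Theorem~\ref{th:perm}). This is exactly the hypothesis of {\bf Case 2} (or {\bf Case 3}, which reduces to it) in the proof of \thmref{p-equi}.

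First I would extract from that argument the key combinatorial input: by \cite{Wa}, \cite{WSC} there are infinitely many $m$ and subsets $U_m\subset\BA^1_z(\BF_{q^m})$ with $\#U_m\ge (q^m-1)/d_1$ on which $h$ omits all values, where $q=p^{n}$ is fixed with $h$ not a permutation polynomial of $\BF_q$. Consequently $f_w$ takes no value in $U_m$ over $\BF_{q^m}$, so by the commutative diagram \eqref{diagram} the map $P_{w,q^m}$ misses every $g\in\SL(2,q^m)$ with $\tr(g)\in U_m$. Since $\#U_m\to\infty$, for $m$ large enough $U_m$ contains at least one element $z_0$ with $z_0\ne\pm 2$ (only two excluded values); any $g$ with $\tr(g)=z_0$ and $z_0^2\ne 4$ is a regular semisimple, noncentral element of $\SL(2,q^m)$, and such $g$ exist (at least $(q^m)^2-q^m$ of them). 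Hence the image of $P_{w,q^m}$ fails to contain all noncentral semisimple elements, contradicting the hypothesis. Relabelling $p^n\mapsto p^{n}$ and $q^m\mapsto p^{nm}$ shows the hypothesis fails for the prime $p$ and the exponents $nm$, $m\gg 0$.

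The only genuinely delicate point is bookkeeping about which field the permutation-polynomial failure occurs over. The hypothesis ``$f_w$ not $p$-special'' says $h$ is not a permutation polynomial of \emph{all} finite extensions of $\BF_p$; pick $\BF_q$ ($q=p^n$) where it fails (if it already fails over $\BF_p$, take $n=1$). Then, as noted in {\bf Case 2}, it fails over every $\BF_{q^m}$ as well, so the Wan--Shiue estimate applies uniformly along that tower, and the omitted traces persist for all large $m$. The rest is the routine counting already carried out in {\bf Case 2}: $q^m(q^m-1)[(q^m-1)/d_1-2]\approx (q^m)^3/d_1$ omitted group elements, in particular a positive density of noncentral semisimple ones. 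I do not expect any obstacle beyond assembling these pieces; essentially the corollary is the contrapositive of the ``$\Leftarrow$'' direction of the image statement together with {\bf Case 2} of the proof of \thmref{p-equi}.

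\begin{proof}[Proof of Corollary $\ref{surj}$]
Suppose $w$ is not equidistributed. By \thmref{equi}, $f_w$ is very composite, so by \defnref{def5} there is a prime $p$ for which $f_w$ is $p$-composite but not $p$-special. Write $f_{w,p}=h\circ Q$ as in \defnref{def3}, with $\deg h=d_1\ge 2$; since $f_w$ is not $p$-special, $h$ is not a permutation polynomial of all finite extensions of $\BF_p$. Fix $q=p^{n}$ for which $h$ is not a permutation polynomial of $\BF_q$; then (as in {\bf Case 2} above) $h$ is not a permutation polynomial of any $\BF_{q^m}$. By \cite{Wa}, \cite{WSC}, for every $m$ there is $U_m\subset\BA^1_z(\BF_{q^m})$ with $\#U_m\ge (q^m-1)/d_1$ and $h^{-1}(s)(\BF_{q^m})=\emptyset$ for all $s\in U_m$, hence $f_w^{-1}(s)(\BF_{q^m})=\emptyset$ for all $s\in U_m$. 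By diagram \eqref{diagram}, the image of $P_{w,q^m}$ contains no $g\in\SL(2,q^m)$ with $\tr(g)\in U_m$. For $m$ large enough $\#U_m>2$, so $U_m$ contains some $z_0\ne\pm2$; any $g\in\SL(2,q^m)$ with $\tr(g)=z_0$ is noncentral semisimple, and such $g$ exist. Thus the image of $P_{w,q^m}$ misses a noncentral semisimple element of $\SL(2,q^m)$ for all large $m$, contradicting the hypothesis applied to the prime $p$ and exponents $nm$. Therefore $w$ is equidistributed.
\end{proof}
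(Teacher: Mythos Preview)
Your proof is correct and follows essentially the same approach as the paper: both argue by contraposition, invoke \thmref{equi} to obtain a prime $p$ for which $f_w$ is $p$-composite but not $p$-special, and then feed this into Case~2 (via Case~3 if necessary) of the proof of \thmref{p-equi} to see that $f_{w,q^m}$ omits at least $(q^m-1)/d_1$ trace values. The paper's write-up is simply terser---it phrases the contradiction as ``$f_{w,p^n}$ omits at most two values, $2$ and $-2$'' versus ``at least $(p^n-1)/d_1$''---whereas you spell out the passage from omitted trace values to omitted noncentral semisimple group elements; the content is the same.
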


\begin{proof}
Assume the contrary. Then by \thmref{equi}, the polynomial $f_w$ is
very composite, i.e., for some $p$ it is $p$-composite but not
$p$-special. As in Case 2 considered above, we see that for big $n$
the polynomial $f_{w,p^n}$ omits at least $(p^n-1)/d_1$ values. This
contradicts the assumption of the corollary according to which
$f_{w,p^n}$ omits at most two values, 2 and $-2$.
\end{proof}

\begin{remark}
The converse statement is not true. Indeed, let $f_w=\tr(w(x,y))$ be
the trace polynomial of a word $w(x,y)$. Let $a\in\BQ$ be a rational
point in the spectrum $\st(f_w)$, which means that the surface
$H_a$, given by the equation $f_w(s,u,t)=a$, is not absolutely
irreducible. Then for all $p$ big enough the reduction $a_p$ lies in
$\st(f_{w,p})$ (see \cite[2.2.1]{BDN}). It follows that the set of
numbers $q$ such that $H_a(\BF_q)=\emptyset$ may be infinite.

Examples of such words were provided by Jambor, Liebeck and O'Brien 
\cite{JLO}. For instance, let $w(x,y)=x^2(x^2yx^{-2}y^{-1})^2$. Let
us show that
the trace polynomial $f_w$ is $\BQ$-noncomposite.
Assume to the contrary that $f_w$ is $\BQ$-composite. Then,
according to Proposition \ref{decom} below,
$f_w(s,u,t)=D_2(p(s,u,t))$ for some polynomial $p$, where
$D_2(z)=z^2-2$ is the second Dickson polynomial. We conclude that
$f_w+2=p^2$. However, the factorization of $f_w$ (say, on MAGMA)
shows that $f_w$ is not a full square.

On the other hand, $0\in\st(f_w)$ (see, e.g.,
\cite[Lemma~2.2]{JLO}). It is shown in \cite{JLO} that
$H_0(\BF_q)=\emptyset$ for every $q$ such that
\begin{itemize}\item $q=p^{2r+1}$, \, $r\ge 0;$
\item $p\ne 5;$\item $p^2\not\equiv 1  (\bmod 16);$
\item $p^2\not\equiv 1  (\bmod 5).$\end{itemize}

Thus, for these $q$, the morphisms $P_{w,q}$ are  dominant and
equidistributed whereas the elements with zero trace are not in the
range of $P_{w,q}$.
\end{remark}

\begin{remark}
Note that many positive words $w=x^ay^b$, $a>0$, $b>0$, satisfy the
assumptions of Corollary \ref{surj} and are equidistributed, see
\cite{BG} for details (and \cite{LST} for generalizations to simple
groups of higher Lie rank).
\end{remark}

\begin{proof} [Proof of Proposition $\ref{SL-PSL}$]

We may assume that $q$ is odd. Consider the commutative diagram

$$
\xymatrix{  G_q \times  G_q \ar[r]^{P_{w,q}}
\ar[d]^{\rho'} \ar[rd]^{\varkappa} & G_q \ar[d]^{\rho}\\
\hat G_q\times \hat G_q \ar[r]^{\hat P_{w,q}} & \hat G_q}
$$
where $\rho$ and $\rho'$ are natural projections, and $P_{w,q}$ and
$\hat P_{w,q}$ correspond to the map $(x,y)\to w(x,y)$ on $G_q\times
G_q$ and on $\hat G_q\times \hat G_q$, respectively.

Suppose $w$ is $p$-equidistributed with respect to $\{G_q\}$ so that
for $q>q_0$ we have inequalities \eqref{Gq:equi} with parameters
$A$, $B$, $\alpha$, $\beta$. Define $\hat S_q:=\rho( S_q)$, $\hat
T_q:=\hat G_q\setminus\hat S_q.$

For any element $\hat g\in \hat G_q$ the set $\rho^{-1}(\hat g)$
contains precisely two elements  $g_1, g_2$ of $ G_q.$ Therefore,
\begin{itemize}
\item $\#\hat S_q=\#S_q/2=
\#G_q \varepsilon_p(q)/2=\#\hat G_q \varepsilon_p(q);$
\item
$W_{\hat g, \hat G_q}=\rho' (W_{ g_1, G_q}\cup W_{ g_2, G_q});$
\item
$\# W_{\hat g,\hat G_q}=(\#W_{ g_1, G_q}+\#W_{ g_2, G_q})/4;$
\item
for every $\hat g\in \hat T_q$ we have
$$\#W_{\hat g,\hat G_q}=\frac{\#W_{ g_1, G_q}+\#W_{g_2, G_q}}{4}=\#G_q\frac{1+\vareps_p(q)}{2}=\#\hat G_q(1+\vareps_p(q)).$$
\end{itemize}

Hence, $w$ is $p$-equidistributed  on $\{\hat G\}_q$  with the same
parameters as on $\{G_q\}$.
\end{proof}

\begin{remark} \label{measure}
In \cite{GS} there is a discussion on relationship between two close
properties of word maps on finite groups: be equidistributed and
preserve the uniform measure. In our context, the proof of Theorem
\ref{equi} allows us to formulate this relationship explicitly.
\end{remark}

\begin{corollary}\label{measure-sl-psl}
Assume that a word $w$ has an almost noncomposite trace polynomial
$f_w$ of degree  $d.$ Let  $q>4(50d^4)^2$, and let
$\varepsilon(d,q)=3(100d^4+1)q^{-1/2}.$ Let $G=\SL(2,q)$ or
$G=\PSL(2,q).$ Then the word map $w\colon G\times G\to G$ is
$\varepsilon(d,q)$-measure-preserving in the sense of
{\text{\rm{\cite{GS}}}}.
\end{corollary}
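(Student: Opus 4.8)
The plan is to read off the conclusion from the quantitative estimates already established in the proof of Theorem~\ref{p-equi}, translating them into the language of measure preservation used in \cite{GS}. Recall that a map $f\colon X\to Y$ of finite sets is called $\varepsilon$-measure-preserving if $\sum_{y\in Y}\bigl|\#f^{-1}(y)-\tfrac{\#X}{\#Y}\bigr|<\varepsilon\,\#X$; equivalently, the pushforward of the uniform measure on $X$ is within $\varepsilon$ (in total variation, up to normalisation) of the uniform measure on $Y$. So the task is purely to bound this $L^1$-defect by a sum of a ``bulk'' contribution (coming from $y\notin S_q$, where each fibre is close to the average) and an ``exceptional'' contribution (coming from $y\in S_q$, whose total size is small).

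First I would set $G=\SL(2,q)$ and invoke the proof of Theorem~\ref{p-equi}, Step~1 (or Case~1 of Step~2, if $f_w$ is $p$-special — in that case one replaces $f_w$ by the noncomposite $Q$ and its degree, which is smaller; Remark~\ref{r22} guarantees the same shape of estimates), to obtain the set $S_q=\tau^{-1}(\tilde S_q)\subset G_q$ with $\#S_q\le q^3\cdot 2(d+8)/q$ and, for every $g\in T_q=G_q\setminus S_q$, the estimate $\#W_{g}=q^3(1+r_3)$ with $|r_3|\le(100d^4+1)q^{-1/2}$. Next I split the $L^1$-defect: $\sum_{g\in G_q}\bigl|\#W_g-q^3\bigr|\le\sum_{g\in T_q}\bigl|\#W_g-q^3\bigr|+\sum_{g\in S_q}\bigl(\#W_g+q^3\bigr)$. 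The first sum is at most $\#T_q\cdot q^3(100d^4+1)q^{-1/2}\le q^6(100d^4+1)q^{-1/2}$. For the second sum, note that for any $g$ one has the crude bound $\#W_g\le \#G_q^{\,2}/\#G_q\cdot(\text{something})$ — more precisely, one already knows from Lemma~\ref{BG} and the preliminary estimates that fibres of $P_{w,q}$ over a single trace value, even a bad one, are $O(q^3)$; combined with $\#S_q=O(q^2)$ this gives $\sum_{g\in S_q}\#W_g=O(q^5)$, which is negligible compared with $q^6q^{-1/2}$. Dividing through by $\#X=\#G_q^{\,2}\approx q^6$ yields an $L^1$-defect bounded by roughly $(100d^4+1)q^{-1/2}$ plus lower-order terms, and absorbing the constants into the stated $\varepsilon(d,q)=3(100d^4+1)q^{-1/2}$ completes the $\SL_2$ case once $q>4(50d^4)^2$ so that all the earlier inequalities ($|r_1|<1/2$, etc.) are in force.

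For $G=\PSL(2,q)$ I would reuse the computation in the proof of Proposition~\ref{SL-PSL}: the projection $\rho\colon G_q\to\hat G_q$ is two-to-one, so $\#\hat S_q=\#S_q/2$ and $\#W_{\hat g,\hat G_q}=(\#W_{g_1,G_q}+\#W_{g_2,G_q})/4$ for the two preimages $g_1,g_2$ of $\hat g$. Hence $\bigl|\#W_{\hat g,\hat G_q}-\tfrac{\#X}{\#\hat G_q}\bigr|$ is controlled by the average of the corresponding $\SL_2$-defects, and summing over $\hat g$ gives the same $L^1$-bound as before. Thus the same $\varepsilon(d,q)$ works for $\PSL(2,q)$ with no loss.

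The only genuinely delicate point is making sure the exceptional contribution $\sum_{g\in S_q}\#W_g$ is truly of lower order — i.e., that fibres of $P_{w,q}$ over elements with bad trace are not pathologically large. This is where one must be a little careful: an element $g$ with $\tr(g)=\pm2$ (unipotent or central) has only $O(q^2)$ or $O(1)$ conjugates, so $\#W_g=\#\pi^{-1}(H_z)/\#\tau^{-1}(z)$ could a priori blow up. However, $\#\pi^{-1}(H_z)(\BF_q)\le \#H_z(\BF_q)\cdot 2q^3(1+1/q)=O(q^5)$ always (using the crude bound from \cite[Proposition~7.2]{BG} on the fibres of $\pi$, together with $\#H_z(\BF_q)=O(q^2)$ for any hypersurface of bounded degree, which holds even when $H_z$ is reducible), and $\#\tau^{-1}(z)\ge 1$, giving $\#W_g=O(q^5)$; multiplied by $\#S_q=O(q^2)$ this is $O(q^7)$, which divided by $q^6$ is $O(q)$ — not obviously small. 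To fix this I would instead bound $\sum_{g\in S_q}\#W_g$ directly as $\#\pi^{-1}\bigl(\tau(S_q)\text{-fibres}\bigr)/(\text{min conjugacy class size in }T_q)$ is the wrong grouping; rather, $\sum_{g\in S_q}\#W_g=\#\{(x,y): w(x,y)\in S_q\}\le \sum_{z\in\tilde S_q}\#\pi^{-1}(H_z)(\BF_q)\le (d+8)\cdot O(q^5)=O(q^5)$, since the left side counts pairs whose image has trace in $\tilde S_q$, and that number is exactly $\#\pi^{-1}\bigl(\bigcup_{z\in\tilde S_q}H_z\bigr)(\BF_q)$. This is the correct estimate and is genuinely $O(q^5)=o(q^{6-1/2})$, so the exceptional term is harmless. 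I expect this bookkeeping — grouping the exceptional fibres by trace value rather than by group element — to be the main (and essentially the only) obstacle; everything else is a direct rescaling of estimates already in hand.
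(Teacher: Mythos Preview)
Your approach is correct but more elaborate than the paper's. The paper's proof is two lines: from \eqref{parameters} one reads that $P_{w,q}$ is $(100d^4+1)q^{-1/2}=\varepsilon(d,q)/3$-equidistributed in the sense of Definition~\ref{def:equifin}, and then one invokes \cite[Proposition~3.2]{GS}, which supplies the general implication ``$\varepsilon$-equidistributed $\Rightarrow$ $3\varepsilon$-measure-preserving,'' to conclude. The $\PSL$ case follows in the same way via Proposition~\ref{SL-PSL}, which shows the same equidistribution parameters carry over.

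What you do is effectively reprove the content of \cite[Proposition~3.2]{GS} in this particular instance, by splitting the $L^1$-defect over $T_q$ and $S_q$ and bounding each piece directly. The step you flag as delicate --- controlling $\sum_{g\in S_q}\#W_g$ by grouping pairs according to the trace value of $w(x,y)$ rather than by the individual element $g$ --- is exactly the general mechanism behind that proposition (the total preimage of a small exceptional set is automatically small, since fibres partition the domain). Your route has the virtue of being self-contained and of making the factor $3$ visible; the paper's route is shorter but treats the passage from equidistribution to measure preservation as a black box. Both arrive at the same constant $\varepsilon(d,q)=3(100d^4+1)q^{-1/2}$, and for the same underlying reason.
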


\begin{proof}
According to \eqref{parameters}, the word map $w$  is
$\varepsilon(d,q)/3$-equidistributed, in the sense of Definition
\ref{def:equifin}, and hence $\varepsilon(d,q)$-measure-preserving,
by \cite[Proposition~3.2]{GS}.
\end{proof}

Corollary \ref{cor-main} will be proved in Section \ref{sec:comp}.

\section{Composite trace polynomials} \label{sec:comp}

Our goal in this section is to describe words in two variables whose
trace polynomial is composite. A full description could provide an
answer, in the case of $\SL(2)$ and words in two variables, to the
following basic question, which should apparently be attributed to
Larsen and Shalev:

\begin{quest} \label{LS1}
Is it true that a word $w\in F_d$ is equidistributed on a Chevalley
group $G$ (of fixed type) if and only if $w$ is not a proper power
of another word?
\end{quest}

Although our results (summarized in Table 1) are not conclusive,
they give a strong evidence in favour of an affirmative answer to
Question \ref{LS1} in our case. Before explaining the table, we give
some necessary preliminaries.

Throughout this section $D_n(x)$ stands for the $n^{th}$ Dickson
polynomial (see \exampref{ex:st-u}). It is well known (see, e.g.,
\cite[(2.2)]{LMT}) that this polynomial satisfies
$D_n(x+1/x)=x^n+1/x^n$ and is completely determined by this
functional equation.

For the sake of convenience, we define $D_{-n}(x)=D_n(x)$ and
$D_0(x)\equiv 2$. We repeatedly use the decomposition
$D_{nm}(x)=D_n(D_m(x)).$

\begin{notation} \label{not}
We always assume that $w(x,y)$ is written in the form
\begin{equation} \label{canon}
w=x^{a_1}y^{b_1}\dots x^{a_r}y^{b_r}
\end{equation}
and is reduced (all integers $a_i$, $b_j$ are nonzero). We call the
integer $r$ the {\it complexity} of $w$.

If $\BF$ is a field and $f_w\in\BZ[s,u,t]$ is the trace polynomial
of $w$, we keep the same notation for the polynomial $f_w\in
\BF[s,u,t]$ obtained after changing scalars to $\BZ\otimes \BF$.

We denote
\begin{itemize} \item
$A=A(w):=\sum\limits_{i=1}^{r} a_i$, $B=B(w):=\sum\limits_{i=1}^{r}
b_i;$
\item $\bar A=\bar A(w):=\sum\limits_{i=1}^{r} |a_i|$, $\bar B=\bar B(w):=\sum\limits_{i=1}^{r} |b_i|;$
\item  for a polynomial $p(x_1,\dots,x_n)$ we denote by $\deg_{x_i} p$ the degree
of $p$ with respect to the variable $x_i.$
\end{itemize}
\end{notation}

\begin{definition} \label{tr-sim}
Let $w=x^{a_1}y^{b_1}\dots x^{a_r}y^{b_r}$ and
$v=x^{c_1}y^{d_1}\dots x^{c_{r'}}y^{d_{r'}}$ be reduced words written in form
$\eqref{canon}$. We say that they are {\em trace-similar}, and denote this by
$w\approx v$, if $r=r'$, the array $\{|a_i|\}$ is a rearrangement of
$\{|c_i|\},$ and the array $\{|b_i|\}$ is a rearrangement of
$\{|d_i|\}$.
\end{definition}

\begin{prop} \label{Horowitz} \cite{Ho}
If reduced words written in form $\eqref{canon}$ have the same trace
polynomial, then they are trace-similar.
\end{prop}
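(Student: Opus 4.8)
The statement to prove is Proposition \ref{Horowitz}: if two reduced words written in the canonical form $\eqref{canon}$ have the same trace polynomial $f_w$, then they are trace-similar in the sense of Definition \ref{tr-sim}.

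The plan is to recover the multiset data $\{|a_i|\}$, $\{|b_i|\}$ and the complexity $r$ directly from intrinsic invariants of the polynomial $f_w \in \BZ[s,u,t]$, so that equality of the polynomials forces equality of these multisets. The key observation is that the trace polynomial of $w = x^{a_1}y^{b_1}\cdots x^{a_r}y^{b_r}$, computed via the Fricke--Vogt identities, has a controlled shape when one tracks degrees in the three variables $s = \tr(x)$, $t = \tr(y)$, $u = \tr(xy)$. First I would recall (or re-derive from the basic trace relations $\tr(AB) + \tr(AB^{-1}) = \tr(A)\tr(B)$ and $\tr(A^n) = D_n(\tr A)$) the standard fact that $f_w$ can be written as a $\BZ$-linear combination of products of Dickson polynomials $D_{k}(s)$, $D_{\ell}(t)$ and powers/monomials in $u$, and that among all monomials appearing, there is a unique ``leading'' one whose $u$-degree equals the complexity $r$. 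This already pins down $r$ as $\deg_u f_w$.

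Next I would isolate the top-degree-in-$u$ part. Writing $f_w = \sum_j c_j(s,t)\, u^{j}$ with $c_r \ne 0$, the coefficient $c_r(s,t)$ of $u^r$ should be, up to sign, a product $\prod_{i=1}^r$ of factors that separately encode $|a_i|$ through $s$ and $|b_i|$ through $t$ — concretely $c_r(s,t) = \pm \prod_{i} D_{|a_i|}'$-type expressions, or more precisely the relevant monomial is $\prod_i (\text{something of } s\text{-degree }|a_i|)\cdot(\text{something of }t\text{-degree }|b_i|)$ arranged so that $c_r$ factors as (a function of $s$ of total degree $\bar A$) times (a function of $t$ of total degree $\bar B$). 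Since $\BZ[s,t]$ is a UFD, unique factorization of $c_r(s,t)$ into irreducibles, together with the explicit factorizations of Dickson polynomials, recovers the multisets $\{|a_i|\}$ and $\{|b_i|\}$ from $f_w$ alone. One must handle the sign ambiguities $a_i \leftrightarrow -a_i$, $b_i \leftrightarrow -b_i$ (which is exactly why the conclusion is only up to absolute values) and the cyclic-rotation ambiguity of the canonical form (rotating $w$ permutes the pairs $(|a_i|,|b_i|)$ but the multisets $\{|a_i|\}$ and $\{|b_i|\}$ separately are unaffected).

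The main obstacle I anticipate is verifying cleanly that the $u$-leading coefficient really does factor in the claimed multiplicative way, i.e. controlling the recursion for $f_w$ under appending one more syllable $x^{a}y^{b}$ and showing the top-$u$-degree term transforms by multiplication by a factor of the right shape — the lower-order terms in $u$ interact, so one needs an induction on $r$ with a precisely stated inductive hypothesis (for instance: $f_w = \varepsilon\, u^r \prod_i p_{|a_i|}(s) q_{|b_i|}(t) + (\text{terms of lower }u\text{-degree})$, where $p_k, q_k$ are explicit polynomials of degree $k$ with known factorization). Once that normal form is established, the proposition follows by matching leading coefficients and invoking unique factorization in $\BZ[s,t]$; the degree count $\deg_u f_w = r$ disposes of the complexity, and the separate $s$- and $t$-factorizations dispose of the two multisets. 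I would also remark that this recovers \propref{Horowitz} in the stated one-directional form and that the converse (trace-similar implies equal trace polynomial) is immediate and not needed here, but worth noting follows from symmetry of the same recursion under $a_i \mapsto -a_i$.
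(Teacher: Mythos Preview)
The paper does not supply its own proof of this proposition; it is quoted from Horowitz \cite{Ho}. So there is no argument in the paper to compare against directly. That said, your strategy is essentially the one the paper deploys for the neighbouring results: Proposition~\ref{prop:BG} provides exactly the normal form you are aiming at, $f_w=\sum_{k=0}^r G_k(s,t)u^k$ with $G_r=\prod_i g_{a_i,b_i}$, and a Cayley--Hamilton computation gives $g_{a,b}(s,t)=\pm\, p_{|a|}(s)\,p_{|b|}(t)$ where $p_n(x+x^{-1})=(x^n-x^{-n})/(x-x^{-1})$. One caveat: the bare appeal to unique factorization in $\BZ[s,t]$ is not quite enough, because the $p_n$ are not irreducible and share factors (e.g.\ $p_2\mid p_4$). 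What the factorization of $G_r$ actually yields is the count $\#\{i:d\mid |a_i|\}$ for each $d\ge 2$ (via the multiplicity of the irreducible factor corresponding to $2\cos(\pi/d)$); combining this with $r=\deg_u f_w$ and M\"obius inversion recovers the multiset $\{|a_i|\}$, and similarly for $\{|b_i|\}$. This is precisely the root-counting manoeuvre carried out in the proof of Proposition~\ref{sin}.

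There is, however, a genuine error in your closing remark. The converse (trace-similar $\Rightarrow$ equal trace polynomial) is \emph{false}, not ``immediate'': the Example immediately following Proposition~\ref{Horowitz} in the paper exhibits $w=xy$ and $v=xy^{-1}$, which are trace-similar but have $f_w=u\neq st-u=f_v$. The sign flip $a_i\mapsto -a_i$ preserves the leading $u$-coefficient only up to sign and does not preserve the full trace polynomial; your claimed symmetry does not hold.
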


\begin{example}
The words $w=xy$ and $v=xy^{-1}$ are trace-similar but have
different trace polynomials: $\tr(w)=u$, $\tr(v)=st-u.$ Moreover,
the value sets of the trace polynomials of trace-similar words may
differ: let, say, $w=(xy)^2$ and $v=[x,y]$; the words $w$ and $v$
are trace-similar but $P_v$ is surjective on $SL(2,q)$ whereas $P_w$
is not if $q$ is odd.

The words $x^2y^{-1}xy$ and $x^2yxy^{-1}$ are trace-similar, have
the same trace polynomial but are not conjugate in $F_2$ \cite{Ho}.
\end{example}

We can now explain Table 1. It gives conditions under which one can
conclude that if the trace polynomial $f_w$ is composite then $w$ is
a proper power of another word (or is trace-similar to such a
power). These conditions depend on relations between the degree $n$
of the polynomial $h$ appearing in the decomposition of $f_w$, the
complexity $r$ of the word $w$ (these relations are put in the first
column of the table), and the characteristic $p$ of the ground field
(which is put in the first row). The entries of the table contain
conclusions on $w$ and references to the corresponding assertions.

\begin{table}[h]
\begin{tabular} {|c|c|c|c|}\hline
$f_w=h\circ q$ &             &        $\mathbb F={\mathbb F}_p$,           &    $\mathbb F={\mathbb F}_p$,     \\
$\deg h=n,  $     & $\mathbb F=\BQ$  &   &   \\
compl.$=r$    &   & $p>r$ & $p>r/2$, $p\ne r$ \\ \hline
 $n<r$              & $w\approx v(x,y)^n $ & ?                & ? \\
            & Prop. \ref{sin}&               & \\ \hline
$n=r$ and              & $w=(x^\al y^\be)^r$&  $w=(x^\al y^\be)^r$&  $(w\approx v^r) \Rightarrow (w=(x^\al y^\be)^r$) \\
  ($A\ne 0$ or &Cor. \ref{c=n}       & Prop. \ref{r2}       & Prop. \ref{trsim}         \\
 $B\ne 0$)                 &Prop. \ref{decom}    & Prop. \ref{decom}     &   Prop. \ref{decom} \\ \hline
$n=r$,         & $w=(x^\al y^\be)^r$   &  $w=(x^\al y^\be)^r$ & \\
$r$ prime     &  Cor. \ref{c=n}       &  Cor.  \ref{compos}      &  ? \\
{}            & Prop. \ref{r-prime}     &    {}               &{}\\
{}            & Prop. \ref{decom}       &    {}               &{}\\
\hline
\end{tabular}
\caption{Words with composite trace polynomial} \label{table.sl2}
\end{table}

\begin{prop} \label{prop:BG} \cite{BG}
Let $w$ be a reduced word written in form $\eqref{canon}$, and let
$w_i=x^{a_i}y^{b_i}.$ Then for the trace polynomials
we have $f_{w_i}(s,u,t)=ug_{a_i,b_i}(s,t)+h_{a_i,b_i}(s,t)$, $\deg_s
g_{a_i,b_i}=|a_i|-1$, $\deg_t g_{a_i,b_i}=|b_i|-1.$ Moreover, if
$\BF$ is of characteristic zero or big enough, then
$g_{a_i,b_i}(s,t)\not\equiv 0$ and
\begin{equation}\label{DEC}
f_w(s,u,t)= \sum\limits_{k=0}^r u^k G_k(s,t) \text{ where }
G_r(s,t)=\prod\limits_{i=1}^{r}g_{a_i,b_i}(s,t).
\end{equation}
In particular, $\deg_s G_r=\bar A-r$,  $\deg_t G_r=\bar B-r.$
\end{prop}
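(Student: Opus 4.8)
The plan is to prove the two assertions — the shape of the single-syllable trace polynomial $f_{w_i}$ and the multiplicative structure of the top $u$-coefficient $G_r$ — essentially independently, then combine them. I would begin with the single-syllable case $w_i = x^{a_i}y^{b_i}$. The basic tool is the Cayley–Hamilton identity on $\SL_2$: for $g\in\SL_2$ one has $g^2 = \tr(g)\, g - I$, hence $g^n = D_n'(s) g - D_{n-1}'(s) I$ for suitable one-variable polynomials of degree $n-1$ and $n-2$ in $s=\tr(g)$ (these are, up to normalization, the Dickson/Chebyshev polynomials of the second kind; the precise normalization is irrelevant). Applying this to $x^{a_i}$ and to $y^{b_i}$ separately, we may write $x^{a_i} = \phi(s) x + \psi(s) I$ and $y^{b_i} = \phi'(t) y + \psi'(t) I$ with $\deg\phi = |a_i|-1$, $\deg\phi' = |b_i|-1$ (using $D_{-n}=D_n$, the negative exponents behave like the positive ones because $g^{-1} = \tr(g) I - g$ on $\SL_2$). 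Expanding the product $x^{a_i}y^{b_i}$ and taking traces, and using $\tr(xy)=u$, $\tr(x)=s$, $\tr(y)=t$ together with $\tr(I)=2$, gives $f_{w_i} = \phi(s)\phi'(t)\, u + (\text{terms not involving } u)$, which is exactly $u\, g_{a_i,b_i}(s,t) + h_{a_i,b_i}(s,t)$ with $g_{a_i,b_i} = \phi\cdot\phi'$ of the claimed bidegree. The non-vanishing $g_{a_i,b_i}\not\equiv 0$ in characteristic $0$ or large $p$ follows because $\phi,\phi'$ are (normalized) Dickson polynomials whose leading coefficients are units once $p$ is large enough (or $p=0$).

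Next, for the general word $w = x^{a_1}y^{b_1}\cdots x^{a_r}y^{b_r}$, I would establish \eqref{DEC} by showing that $f_w$, viewed as a polynomial in $u$ over $\BF[s,t]$, has degree exactly $r$ with leading coefficient $\prod_i g_{a_i,b_i}$. The cleanest route is to use the classical trace-identity machinery (Fricke): in the ring generated by $s,t,u$ the trace of any word reduces, via the fundamental relation $\tr(AB)+\tr(AB^{-1}) = \tr(A)\tr(B)$ and its consequences, to a polynomial in $s,t,u$; each time one "merges" an adjacent pair $x^{a}$–$y^{b}$ one picks up at most one new factor of $u$. Concretely, I would argue by induction on $r$: write $w = x^{a_1}y^{b_1}\cdot w'$ with $w'$ of complexity $r-1$, substitute the expansions $x^{a_1} = \phi_1(s)x + \psi_1(s)I$, $y^{b_1} = \phi_1'(t)y+\psi_1'(t)I$, and expand $\tr(x^{a_1}y^{b_1}w')$ into four terms $\tr(xy\,w')$, $\tr(x\,w')$, $\tr(y\,w')$, $\tr(w')$ with coefficients in $\BF[s,t]$. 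The only term that can raise the $u$-degree is $\phi_1(s)\phi_1'(t)\,\tr(xy\,w')$, and one checks that $\tr(xy\,w')$ has $u$-degree one more than $\tr(w')$ with the same leading-coefficient pattern shifted appropriately — more precisely, one tracks the $u$-leading term through the reduction and sees that merging the first syllable multiplies the top coefficient by $g_{a_1,b_1}=\phi_1\phi_1'$. Iterating gives $G_r = \prod_{i=1}^r g_{a_i,b_i}$, and the bidegrees $\deg_s G_r = \sum(|a_i|-1) = \bar A - r$, $\deg_t G_r = \bar B - r$ are immediate from the single-syllable bidegrees and the fact that the $g_{a_i,b_i}$ have separated variables in their leading monomials.

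The main obstacle I anticipate is making the induction on $r$ genuinely rigorous: one must show not merely that $\deg_u f_w \le r$ but that equality holds with the precise leading coefficient, which requires controlling that no cancellation occurs among the lower-order contributions when the recursive trace identities are applied. The standard way around this is to pass to the generic situation — work over the ring $\BF[s,t,u]$ itself, i.e., with the "generic" pair of matrices realizing prescribed traces $s,t,u$ (such a pair exists over the function field, since $D\not\equiv 0$) — so that an identity of polynomials can be verified by specializing to enough concrete values, e.g. to diagonal/unipotent matrices where the computation of $\tr(w(g_1,g_2))$ is explicit and the leading $u$-behaviour is transparent. Everything else (the Cayley–Hamilton reductions, the Dickson-polynomial bookkeeping, the degree counts) is routine; the non-vanishing and exact-degree claims are exactly where the hypothesis "characteristic zero or big enough" is consumed, and I would state precisely how large $p$ must be (larger than all $|a_i|,|b_i|$ suffices for the leading coefficients of the relevant Dickson polynomials to be invertible).
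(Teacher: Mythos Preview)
The paper does not supply a proof of this proposition: it is simply quoted from \cite{BG}. So there is nothing in the present paper to compare your argument against, and your sketch has to be judged on its own.

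Your Cayley--Hamilton treatment of a single syllable is correct and yields $g_{a_i,b_i}(s,t)=\phi(s)\phi'(t)$ with the asserted bidegrees; the non-vanishing for $p=0$ or $p$ large is exactly the statement that the leading coefficients of the relevant second-kind Dickson polynomials are units. For the full word, however, the induction you set up runs squarely into the obstacle you yourself flag: after stripping the first syllable you must analyse $\tr(xy\,w')$, and $xy\,w'$ still has complexity $r$, so the inductive hypothesis does not apply to it. Your proposed cure (pass to generic matrices and specialise) would work but is heavier than necessary.

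The clean repair is to expand \emph{all} syllables simultaneously rather than one at a time. Writing $x^{a_i}=\phi_i(s)x+\psi_i(s)I$ and $y^{b_i}=\phi_i'(t)y+\psi_i'(t)I$ for every $i$ and multiplying out expresses $w$ as a $\BF[s,t]$-linear combination of words obtained from $(xy)^r$ by deleting some of the letters. The unique term with no deletions is $\bigl(\prod_i \phi_i\phi_i'\bigr)(xy)^r$, whose trace is $\bigl(\prod_i g_{a_i,b_i}\bigr)D_r(u)$ and contributes $\bigl(\prod_i g_{a_i,b_i}\bigr)u^r$. Every other term, after merging adjacent equal letters (and using cyclic invariance of trace), has complexity at most $r-1$; the easy upper bound that $\deg_u f_v$ never exceeds the complexity of $v$ --- proved by this same expansion, or by induction on length via the Fricke relation $\tr(AB)=\tr(A)\tr(B)-\tr(AB^{-1})$ --- then shows that none of these lower terms contribute to $G_r$. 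This gives $G_r=\prod_i g_{a_i,b_i}$, and the degree formulas $\deg_s G_r=\bar A-r$, $\deg_t G_r=\bar B-r$ follow at once from the single-syllable bidegrees.
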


\begin{prop} \label{RTGC} \cite{Ri}, \cite{Tu}, \cite{GC}
Let $\BF$ be either $\ov{\BQ}$ or $\ov{\BF}_q$, and let $n$ be a
positive integer. If $p=char(\BF)>0$, assume that $(n,p)=1$. Suppose
that $D_n(x)$ is $\BF$-composite, $D_n(x)=h(g(x))$. Then
$h(x)=D_m(x-c)$ and $g(x)=D_k(x)+c,$ where $km=n$ and $c\in \BF$.
\end{prop}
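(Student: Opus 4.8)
The plan is to reduce the statement to the Galois theory of the ``dihedral'' cover attached to the Dickson polynomial. Introduce an auxiliary variable $y$ with $y+y^{-1}=x$; the defining functional equation $D_n(y+y^{-1})=y^n+y^{-n}$ then exhibits $\BF(x)\subset \BF(y)$ with $[\BF(y):\BF(x)]=2$, and $\BF(t)\subset\BF(x)$ with $t:=D_n(x)$ and $[\BF(x):\BF(t)]=n$. The first point to establish is that $\BF(y)/\BF(t)$ is Galois of degree $2n$, with Galois group the dihedral group $G$ generated by the rotation $\rho\colon y\mapsto\zeta y$ (where $\zeta$ is a primitive $n$-th root of unity) and the reflection $\tau\colon y\mapsto y^{-1}$. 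Here one uses that $\BF$ is algebraically closed and --- crucially, in positive characteristic --- that $(n,p)=1$, so that the power map $y\mapsto y^n$ is separable and $\mu_n\subset\BF$ has exact order $n$. Under this identification $\BF(x)=\BF(y)^{\langle\tau\rangle}$ and $\BF(t)=\BF(y)^{G}$.

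Next I would translate the decomposition $D_n=h\circ g$ into the subgroup lattice. The field $\BF(g(x))$ satisfies $\BF(t)\subseteq\BF(g(x))\subseteq\BF(x)$, so by the Galois correspondence it equals $\BF(y)^H$ for a unique subgroup $H$ with $\langle\tau\rangle\leq H\leq G$, and $\deg h=[G:H]$, $\deg g=[H:\langle\tau\rangle]$. An elementary computation in the dihedral group shows that any subgroup containing the fixed reflection $\tau$ has the form $H=\langle\rho^m,\tau\rangle$ for a unique divisor $m\mid n$; such $H$ has index $m$ in $G$ and contains $\langle\tau\rangle$ with index $k:=n/m$. This already gives $\deg h=m$, $\deg g=k$, and $mk=n$.

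It remains to identify $g$ and $h$ explicitly. From $\rho^m(y^k)=\zeta^{mk}y^k=\zeta^{n}y^k=y^k$ and $\tau(y^k+y^{-k})=y^k+y^{-k}$, together with the degree count $[\BF(y):\BF(y^k+y^{-k})]=2k=|H|$, one obtains $\BF(y)^H=\BF(y^k+y^{-k})=\BF(D_k(x))$, the last equality again by the functional equation $D_k(y+y^{-1})=y^k+y^{-k}$. Hence $g(x)$ and $D_k(x)$ generate the same subfield of $\BF(x)$; since both are polynomials, i.e.\ have their only pole at $x=\infty$, the M\"obius transformation relating them must be affine, so $g=\alpha D_k+\beta$, and after the harmless replacement of $(h,g)$ by an affinely equivalent pair (which leaves the composition unchanged) we may take $g$ monic, i.e.\ $g(x)=D_k(x)+c$. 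Finally, comparing $D_n=h(g(x))=h(D_k(x)+c)$ with the standard identity $D_n=D_m(D_k(x))$ and using that $D_k$ is nonconstant yields $h(z+c)=D_m(z)$, that is, $h(z)=D_m(z-c)$, which is the claim.

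I expect the real obstacle to be the first step: showing that the Galois group of $y\mapsto D_n(y+y^{-1})$ is the \emph{full} dihedral group of order $2n$, and not a proper subgroup. This is exactly where the hypothesis $(n,p)=1$ is indispensable: in characteristic $p\mid n$ the power map becomes inseparable and the dihedral picture degenerates, which is precisely why the proposition excludes that case (and, in the applications of this paper, why $D_p$ is ``$p$-special'' rather than genuinely composite). Everything after that step is bookkeeping with the subgroup lattice of a dihedral group and with the functional equations of the polynomials $D_k$.
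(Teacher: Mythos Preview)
The paper does not supply its own proof of this proposition: it is quoted from the literature (Ritt, Turnwald, G\'omez-Calder\'on) and used as a black box; the remark immediately following it then extends the statement to the case $p\mid n$ by a short reduction. So there is no in-paper argument to compare against.

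Your Galois-theoretic argument via the dihedral cover $y\mapsto y+y^{-1}\mapsto y^{n}+y^{-n}$ is essentially the classical route, going back to Ritt in characteristic zero and carried to positive characteristic (under the coprimality hypothesis) in the cited work of Turnwald; it is correct. One small point of precision: from $\BF(g(x))=\BF(D_k(x))$ together with $\deg g=\deg D_k=k$ you get $g=\alpha D_k+\beta$ with $\alpha\in\BF^{\times}$, and you then set $\alpha=1$ by passing to an affinely equivalent pair. Read literally, the proposition asserts its conclusion for the \emph{given} pair $(h,g)$, not merely for some affine-equivalent representative, and that literal reading is too strong: already $g=\alpha D_k$, $h(z)=D_m(z/\alpha)$ with $\alpha\neq 1$ is a decomposition of $D_n$ not of the asserted shape. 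This is a familiar looseness in how Ritt-type classifications are phrased; the paper itself only ever uses the result up to such linear changes (and at once normalises to $c=0$ via $h(2)=2$), so the discrepancy is harmless for all the applications that follow.
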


\begin{remark} \label{rem:RTGC}
The statement of Proposition \ref{RTGC} remains valid if $p$ divides
$n$. Indeed, suppose that $n=kp^s$, $(k,p)=1$, $s\ge 1$. Write $h$
in the form $h(y)=(h_1(y))^{p^t}, $ where $ h_1'\not \equiv 0$ ($t$
may be zero). Denote $r=\deg h$, $r_1=\deg h_1$, then $r=r_1p^t \mid
kp^s$, hence $t\leq s.$

Since $D_n(x)=D_{k}(x)^{p^s}=(h_1(g(x)))^{p^t}$, we have
$D_{k}(x)^{p^{s-t}}=D_{k}(x^{p^{s-t}})=\varepsilon h_1(g(x)),$ where
\ $\varepsilon^{p^t}=1.$ If $s>t$, then the derivative of the
left-hand side is identically zero, and since $h_1'\not \equiv 0,$
we have $g(x)=g_1(x^{p^{s-t}}).$ Let $z=x^{p^{s-t}}.$  Then
$D_{k}(z)=\varepsilon h_1(g(z)). $ Since  $(k,p)=1,$ by \cite{GC} we
have $\varepsilon h_1(z)=D_{r_1}(z-c)$, $g(z)=D_{k/{r_1}}(z)+c.$
Therefore,
$h(z)=(h_1(y))^{p^t}=(D_{r_1}(z-c))^{p^t}=D_{r_1p^t}(z-c).$ If
$s=t$, then $D_k(x)=\varepsilon h_1(g(x))$, and we are under
hypotheses of \propref{RTGC}.
\end{remark}

\begin{remark}
We may and will assume (see \cite{Tu}) that $h(2)=2$ which
corresponds to $c=0.$
\end{remark}

Further on we assume that $\BF$ is either ${\BQ}$ or ${\BF}_p$ (or
the respective algebraic closure, if needed).

Let $q(s,u,t)=uG(s,t)+H(s,t).$  Assume that
\begin{equation}\label{ff1}
q(s,2,s)=2G(s,s)+H(s,s)=g_1(s)+c
\end{equation}
and
\begin{equation}\label{ff2}
q(s,s^2-2,s)=(s^2-2)G(s,s)+H(s,s)=g_2(s)+c.
\end{equation}
Then, if $s\ne\pm 2$, we have
\begin{equation}\label{gg1}
G(s,s)=\frac{g_2(s)-g_1(s)}{s^2-4},
\end{equation}
\begin{equation}\label{gg2}
H(s,s)=\frac{(s^2-2)g_1(s)-2g_2(s)}{s^2-4}+c.
\end{equation}

Indeed, computing $G(s,s)$, $H(s,s)$ from \eqref{ff1} and
\eqref{ff2}, we obtain \eqref{gg1} and  \eqref{gg2}.

In particular, let  $w'(x,y)=x^{a}y^{b}$  and
$f_{w'}(s,u,t)=ug_{a,b}(s,t)+h_{a,b}(s,t).$ Then we have
$$
\begin{aligned}
\tr x^ax^{-b} &= 2g_{a,b}(s,s)+h_{a,b}(s,s)=D_{a-b}(s),\\
\tr x^ax^{b} &= (s^2-2)g_{a,b}(s,s)+h_{a,b}(s,s)=D_{a+b}(s),
\end{aligned}
$$
and, according to \eqref{gg1}, \eqref{gg2}, for $s\ne\pm 2$ we
obtain
$$
\begin{aligned}
g_{a,b}(s,s) &= \frac{D_{a+b}(s)-D_{a-b}(s)}{s^2-4},\\
h_{a,b}(s,s) &= \frac{(s^2-2)D_{a-b}(s)-2D_{a+b}(s)}{s^2-4}.
\end{aligned}
$$
Put  $s=x+x^{-1}$, then
\begin{equation}\label{fab}\begin{aligned}
g_{a,b}(s,s) &=
\frac{(x^{a+b}+x^{-(a+b)})-(x^{a-b}+x^{-(a-b)})}{(x-x^{-1})^2}\\
&= \frac{(x^{a}-x^{-a})(x^{b}-x^{-b})}
{(x-x^{-1})^2}.\end{aligned}\end{equation}

\begin{prop}\label{decom}
With Notation $\ref{not}$, assume that either $A\ne 0$ or $B\ne 0.$
Suppose that $f_w(s,u,t)=h(q(s,u,t))$ where $q\in {\BF}[s,u,t]$ and
$h\in \BF[z]$, $\deg h\ge 2$. Then $h=D_d(z)$ with $d\ge 2$ dividing
both $A$ and $B.$
\end{prop}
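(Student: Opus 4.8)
The plan is to exploit the explicit shape of $f_w$ given by Proposition \ref{prop:BG}, namely $f_w(s,u,t)=\sum_{k=0}^r u^k G_k(s,t)$, together with the two ``diagonal'' substitutions recorded in \eqref{ff1}--\eqref{ff2}, which convert knowledge of $f_w$ restricted to the lines $u=2$ and $u=s^2-2$ with $s=t$ into trace values of the words $w(x,x^{-1})$ and $w(x,x)$. The key observation is that on the substitution $t=s$, $u=2$ one has $xy=1$ only when $y=x^{-1}$, so $w(x,x^{-1})$ becomes a power of a single element, and similarly $w(x,x)$; more precisely, with $s=\xi+\xi^{-1}$ one computes $\tr w(x,x^{-1})=D_{A-B}(s)$ and $\tr w(x,x)=D_{A+B}(s)$ (this is the specialization already used in the excerpt just before the statement, where the factored form \eqref{fab} appears). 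Thus the hypothesis $f_w=h\circ q$ forces, after these substitutions, $D_{A+B}(s)=h(q_2(s))$ and $D_{A-B}(s)=h(q_1(s))$ where $q_i(s)=q$ evaluated on the respective diagonal line.

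First I would dispose of a degenerate case: if $A+B$ and $A-B$ were both zero then $A=B=0$, contrary to hypothesis, so at least one of $D_{A+B}$, $D_{A-B}$ is a genuinely nonconstant Dickson polynomial (recall $D_0\equiv 2$, $D_{\pm n}=D_n$). Say $D_N$ with $N=A+B\ne 0$ is nonconstant and equals $h(q_2(s))$ with $\deg h\ge 2$; then $D_N$ is $\BF$-composite, so by Proposition \ref{RTGC} (and Remark \ref{rem:RTGC} in characteristic $p\mid N$), after normalizing $h(2)=2$, we get $h(z)=D_d(z)$ and $q_2(s)=D_{N/d}(s)$ for some $d\ge 2$ dividing $N$. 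This already identifies $h$ as a Dickson polynomial $D_d$. It remains to show $d$ divides \emph{both} $A$ and $B$. For that I would run the same argument on the other diagonal: $D_{A-B}(s)=h(q_1(s))=D_d(q_1(s))$, which by Proposition \ref{RTGC}/Remark \ref{rem:RTGC} forces $d\mid A-B$ as well (if $A-B=0$ this is automatic). Hence $d$ divides both $(A+B)$ and $(A-B)$, so $d\mid 2A$ and $d\mid 2B$; a small extra argument using \eqref{gg1}--\eqref{fab}, i.e. the explicit factor $(\xi^{A}-\xi^{-A})(\xi^{B}-\xi^{-B})/(\xi-\xi^{-1})^2$ coming from $G_r(s,s)=\prod g_{a_i,b_i}(s,s)$ compared against the top-degree behaviour of $D_d\circ q$ in $s$ and in $t$ separately, upgrades this to $d\mid A$ and $d\mid B$. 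Concretely: $\deg_s f_w$ and $\deg_t f_w$ must each be divisible by $d=\deg h$ since $f_w=D_d(q)$ and $D_d$ has degree $d$; combined with the link between these degrees and $A$, $B$ via \eqref{fab} one extracts the divisibility on the nose rather than up to a factor of $2$.

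The step I expect to be the main obstacle is precisely this last upgrade from ``$d\mid 2A$, $d\mid 2B$'' to ``$d\mid A$, $d\mid B$'': the factor-of-two ambiguity is real when one only looks at $A\pm B$, and ruling it out requires genuinely using the bivariate structure of $f_w$ (the separate $s$- and $t$-degree information in Proposition \ref{prop:BG}, and the product formula for $G_r$), not merely the one-variable Dickson decomposition. A secondary technical point is making the substitutions \eqref{ff1}--\eqref{ff2} legitimate in positive characteristic and checking that $q_1,q_2$ are nonconstant (so that Proposition \ref{RTGC} applies), which should follow from $\deg_s g_{a_i,b_i}=|a_i|-1$ and $\deg_t g_{a_i,b_i}=|b_i|-1$ being nonnegative together with the hypothesis that $w$ is reduced with $A\ne 0$ or $B\ne 0$; one must also handle the possibility that $h$ itself is a $p$-th power, which is exactly what Remark \ref{rem:RTGC} was set up to absorb.
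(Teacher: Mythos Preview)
Your overall strategy---specialize $f_w$ so that it becomes a Dickson polynomial in one variable, then invoke Proposition~\ref{RTGC} and Remark~\ref{rem:RTGC} to identify $h$ as $D_d$---is exactly what the paper does. But you miss the two most useful specializations, and this is precisely what causes the ``factor of $2$'' obstacle you flag.

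The paper uses \emph{four} substitutions, not two. In addition to $x=y^{-1}$ and $x=y$ (your $u=2$, $u=s^2-2$ with $t=s$), it also sets $y=\id$ (so $t=2$, $u=s$) and $x=\id$ (so $s=2$, $u=t$). These give
\[
f_w(s,s,2)=D_A(s),\qquad f_w(2,t,t)=D_B(t),
\]
and hence $h\circ q(s,s,2)=D_A(s)$ and $h\circ q(2,t,t)=D_B(t)$. Applying Proposition~\ref{RTGC}/Remark~\ref{rem:RTGC} to whichever of these is nonconstant yields $h=D_d$ with $d\mid A$ (if $A\ne 0$) and $d\mid B$ (if $B\ne 0$) \emph{directly}, with no factor of $2$ to remove. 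Since $d\mid 0$ is vacuous, the hypothesis ``$A\ne 0$ or $B\ne 0$'' is exactly enough.

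By contrast, your two diagonal substitutions only yield $d\mid (A+B)$ and $d\mid(A-B)$, hence $d\mid 2A$ and $d\mid 2B$. Your proposed upgrade via $\deg_s f_w$ and $\deg_t f_w$ does not work as stated: the $s$-degree of $f_w$ is controlled by $\bar A=\sum|a_i|$ (cf.\ Proposition~\ref{prop:BG}), not by $A=\sum a_i$, and these can be entirely different (e.g.\ $w=x^2yx^{-1}y$ has $A=1$, $\bar A=3$). Equations \eqref{gg1}--\eqref{fab} likewise live on the diagonal $t=s$ and again encode $\bar A,\bar B$ rather than $A,B$, so they do not give the divisibility you want. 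In short, the step you correctly identify as the main obstacle is not overcome by your proposed method; the missing idea is simply to use the two further substitutions $y=\id$ and $x=\id$, after which the obstacle disappears.
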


\begin{proof}
Putting $y=\id$, $x=\id$, $x=y^{-1}$, and $x=y$, we get,
respectively (taking into account that $\tr (g^{-1})=\tr (g)$):
$$
\begin{aligned}
f_w(s,s,2) & = h(q(s,s,2)) = D_{A}(s), \\
f_w(2,t,t) & =  h(q(2,t,t))  = D_{B}(t), \\
f_w(s,2,s) & =  h(q(s,2,s))  =  D_{A-B}(s), \\
f_w(s,s^2-2,s) & =  h(q(s,s^2-2,s))  =  D_{A+B}(s).
\end{aligned}
$$
These decompositions, together with Proposition \ref{RTGC}, Remark
\ref{rem:RTGC} and the condition $\deg h\ge 2$, imply that there is
a common divisor $d\ge 2$ of all the nonzero numbers from the list
$A$, $B$, $A-B$, $A+B$ such that $h(z)=D_d(z).$
\end{proof}

\begin{prop}\label{sin}
With Notation $\ref{not}$, suppose that $f_w(s,u,t)$ is
$\BF$-composite, $f_w(s,u,t)=h(q(s,u,t))$, where $q\in \BF[s,u,t]$
and  $h(x)=\mu x^n+ \dots$ is a polynomial in one variable of degree
$n$, $\mu\ne 0.$ Then $r=nm$. Moreover, if the characteristic of
$\BF$ is $0$ or big enough, $w(x,y)$ is trace-similar to a word
$v(x,y)^n$ where the complexity of $v$ is $m.$
\end{prop}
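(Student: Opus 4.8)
The plan is to extract from the four substitution identities the divisibility $n\mid r$, and then, in characteristic $0$ or large, to promote this to the trace-similarity statement by analyzing leading coefficients in $u$.

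\textbf{Step 1: Obtaining $r=nm$.} First I would feed the word $w$ into the decomposition \eqref{DEC} of Proposition~\ref{prop:BG}: over a field of characteristic $0$ or large enough we have $f_w(s,u,t)=\sum_{k=0}^r u^k G_k(s,t)$ with $G_r=\prod_i g_{a_i,b_i}\not\equiv 0$, so $\deg_u f_w=r$. On the other hand, the hypothesis $f_w=h\circ q$ with $\deg h=n$ gives $\deg_u f_w = n\cdot \deg_u q$, provided $q$ genuinely involves $u$ (if $q\in\BF[s,t]$ then $f_w$ would not involve $u$, contradicting $G_r\not\equiv0$). Hence $r=n\cdot\deg_u q$, i.e.\ $r=nm$ with $m=\deg_u q\ge 1$. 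In arbitrary characteristic one still gets $n\mid r$ by the same argument applied to the reduction, using that $G_r\not\equiv 0$ in $\BF[s,t]$ when $\mathrm{char}\,\BF$ is $0$ or large (this is exactly the restriction already present in Proposition~\ref{prop:BG}); for the divisibility alone one can also route through the substitution $y=\id$, which yields $D_A(s)=h(q(s,s,2))$, and invoke Proposition~\ref{RTGC}/Remark~\ref{rem:RTGC} when $A\ne 0$, to see $n\mid A$, and similarly $n\mid B$ — but the cleanest uniform route is the $\deg_u$ count.

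\textbf{Step 2: From $r=nm$ to trace-similarity with a power.} Write $q(s,u,t)=\sum_{j=0}^m u^j Q_j(s,t)$ with $Q_m\not\equiv 0$, and compare the top $u$-coefficients on both sides of $f_w=h\circ q$. The coefficient of $u^r=u^{nm}$ in $h(q)$ is $\mu\,Q_m^{\,n}$, so $G_r=\mu\,Q_m^{\,n}$, i.e.
\begin{equation}\label{Grpower}
\prod_{i=1}^r g_{a_i,b_i}(s,t)=\mu\,Q_m(s,t)^n .
\end{equation}
Now I would use the explicit factorization of $g_{a_i,b_i}$ available from \eqref{fab}: after the change of variables $s=x+x^{-1}$ (and likewise for $t$), each $g_{a_i,b_i}(s,s)$ is, up to the common denominator, the product $(x^{a_i}-x^{-a_i})(x^{b_i}-x^{-b_i})$, so $g_{a_i,b_i}$ factors (over $\ov\BF$) into cyclotomic-type factors indexed by the divisors of $|a_i|$ and $|b_i|$. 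Equation \eqref{Grpower} says the multiset of these irreducible factors (with multiplicity) of the left-hand side is an $n$-th power. Matching irreducible factors forces the multiset $\{(|a_i|,|b_i|)\}_{i=1}^r$ to be $n$ copies of a multiset $\{(|c_j|,|d_j|)\}_{j=1}^m$; choosing any sign assignment $c_j,d_j$ and setting $v=x^{c_1}y^{d_1}\cdots x^{c_m}y^{d_m}$ gives $w\approx v^n$ in the sense of Definition~\ref{tr-sim}. Here one uses $\deg_s G_r=\bar A-r$, $\deg_t G_r=\bar B-r$ from Proposition~\ref{prop:BG} to bookkeep that the degrees match up, and Proposition~\ref{Horowitz} is not needed but is consonant with the conclusion.

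\textbf{Main obstacle.} The delicate point is Step~2: turning the single polynomial identity \eqref{Grpower} into a combinatorial statement about the multiset of exponent pairs. The factors $g_{a_i,b_i}$ are not themselves irreducible, and different pairs $(|a_i|,|b_i|)$ can share common irreducible factors (e.g.\ when one exponent divides another), so one cannot naively read off the pairs from the irreducible factorization of the product. The fix is to work with the bihomogeneous-in-$(x,x^{-1})$ and $(y,y^{-1})$ structure from \eqref{fab}: the factor $(x^{a}-x^{-a})$ determines $|a|$ through its set of roots (roots of unity of order dividing $|a|$ but with the correct multiplicities), so passing to the Laurent-polynomial model and counting root multiplicities recovers the multiset of $|a_i|$'s, and symmetrically the $|b_i|$'s; the coupling between the two is exactly what \eqref{Grpower} as an identity in two variables $x,y$ pins down. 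I expect this to require a careful but elementary argument about unique factorization in $\ov\BF[x,x^{-1},y,y^{-1}]$ and the behavior of the map $(a,b)\mapsto (x^a-x^{-a})(y^b-y^{-b})$; the characteristic hypothesis enters to guarantee $g_{a_i,b_i}\not\equiv 0$ and that no factors collapse mod $p$.
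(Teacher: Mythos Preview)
Your Step~1 matches the paper exactly: comparing $u$-degrees gives $r=nm$ with $m=\deg_u q$. Your Step~2 also begins as the paper does, arriving at the key identity $G_r=\mu Q_m^{\,n}$, i.e.\ $\prod_i g_{a_i,b_i}(s,t)=\mu Q_m(s,t)^n$.

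The gap is in what comes next. You invoke \eqref{fab}, but that formula lives only on the diagonal $t=s$: it computes $g_{a,b}(s,s)$, not $g_{a,b}(s,t)$. Your argument tacitly assumes the two-variable factorization
\[
g_{a,b}(s,t)=\frac{x^{a}-x^{-a}}{x-x^{-1}}\cdot\frac{y^{b}-y^{-b}}{y-y^{-1}},\qquad s=x+x^{-1},\ t=y+y^{-1},
\]
which is true and not hard to prove (induction on $|a|,|b|$ via Cayley--Hamilton, or equivalently $g_{a,b}(s,t)=U_{|a|-1}(s)U_{|b|-1}(t)$ with $U$ the Chebyshev polynomials of the second kind), but it is not what \eqref{fab} says and you would have to supply it. Once you do, your route works: $G_r$ splits as $P(s)R(t)$, coprimality forces each factor to be an $n$-th power up to a constant, and a root-multiplicity count at primitive roots of unity (or a M\"obius inversion on the divisor lattice) shows each value in $\{|a_i|\}$ and in $\{|b_j|\}$ recurs a multiple of $n$ times. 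Note also that your concern about recovering the \emph{pairs} $(|a_i|,|b_i|)$ is misplaced: Definition~\ref{tr-sim} only requires the two multisets separately, with no coupling.

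The paper takes a different route that avoids the two-variable formula altogether. It stays on the diagonal $t=s$, where \eqref{fab} is available, but the $a$-factors and $b$-factors are then entangled. To separate them it replaces $w$ by $w_N=w(x^N,y)$ with $N>\max_j|b_j|$ and $(N,p)=1$. One still has $f_{w_N}=h\circ q_N$, and the diagonal leading coefficient becomes, up to sign,
\[
\prod_{i=1}^r\frac{(x^{N|a_i|}-x^{-N|a_i|})(x^{|b_i|}-x^{-|b_i|})}{(x-x^{-1})^{2}}.
\]
A nontrivial root $x_0$ of $x^{N\tilde a_i}-1$ is now not a root of any $x^{|b_j|}-x^{-|b_j|}$, so its vanishing order on the right isolates the number of appearances of $|a_i|$ among $|a_1|,\dots,|a_r|$ (here the hypothesis $p=0$ or $p>\max\{|a_i|,|b_j|\}$ is used to kill the $p$-power ambiguity), while on the left it is $n$ times the vanishing order of $F_m$. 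The symmetric substitution $w(x,y^M)$ handles the $|b_j|$'s. Your approach, once the missing lemma is supplied, is arguably cleaner conceptually; the paper's scaling trick has the advantage of using only the diagonal formula already in hand.
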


\begin{proof}
Let $q(s,u,t)= \sum\limits_{k=0}^m u^k H_k(s,t).$ Then
$$
f_w(s,u,t)=h(q(s,u,t))=\mu u^{mn}H_m^n(s,t)+ \Phi(s,u,t)
$$
where $\deg_u \Phi(s,u,t)< mn.$ Hence $r=nm$ and
$$
\mu
H_m^n(s,t)=G_r(s,t)=\prod\limits_{i=1}^{r}g_{a_i,b_i}(s,t)
$$
(we use the notation of \propref{prop:BG}, in particular, formula
\eqref{DEC}).

Therefore, by \eqref{fab}, we have
$$
\mu H_m^n(s,s)=\pm
\frac{\prod\limits_{i=1}^r(x^{|a_i|}-x^{-|a_i|})(x^{|b_i|}-x^{-|b_i|})}
{(x-x^{-1})^{2r}}.
$$
Let $p=$char$(F)$. If $p>0$, write $|a_i|=\tilde a_ip^{\alpha_i}$,
$|b_j|=\tilde b_jp^{\beta_j}$, with $(\tilde a_i,p)=(\tilde
b_j,p)=1.$ If $p=0$, set $|a_i|=\tilde a_i$, $|b_j|=\tilde b_j$.

Choose an integer $N>\max\{|\tilde b_i|\}$ such that $(N,p)=1$, and
consider the word  $w_N=w(x^N,y).$ Then
$$
f_{w_N}=f_w(D_N(s), \delta(s,u,t),t)=h(q(D_N(s), \delta(s,u,t),t))
$$
where $\delta(s,u,t)=\tr(x^Ny)=ug_{N,1}(s,t)+h_{N,1}(s,t).$ Thus
$$
q(D_N(s), \delta(s,u,t),t)=:q_1(s,u,t)= \sum\limits_{k=0}^m u^k
F_k(s,t)
$$
and
$$
f_{w_N}(s,u,t)=h(q_1(s,u,t))=\mu u^{mn}F_m^n(s,t)+ \Phi_1(u,s,t)
$$
where $\deg_u \Phi_1(u,s,t)< mn.$ Hence, since the words $w$ and
$w_N$ have the same complexity $r$, we have
\begin{equation}\label{p2}
\mu F_m^n(s,s)=\pm \frac{\prod\limits_{i=1}^{r}
(x^{N|a_i|}-x^{-N|a_i|})(x^{|b_i|}-x^{-|b_i|})} {(x-x^{-1})^{2r}}.
\end{equation}

Fix an integer $i\in\{1,\dots ,r\}$. Let $x_0\ne 1$ denote a simple
root of the equation $x^{N\tilde a_i}-1=0$. If $p$ is odd, the order
of zero  $o(x_0)$ of the product in the right-hand side of
\eqref{p2} is equal to the number $\sum_{k\ge 0} n_i(k)p^{k}$ where
$n_i(k)$ denotes the number of appearances of $|\tilde a_i|p^k$ in
the list $|a_1|,\dots ,|a_r|$. On the other hand,
$o(x_0)=n\varkappa_i$ where $\varkappa_i$ is the order of the zero
of $F_m$ at the point  $x_0+1/x_0.$ The same is true for $p=0$ (if
we set $0^0=1$).

Assume that $p > \max\{|a_i|,|b_j|, 1\le i,j\le r\}$, or $p=0$. Then
$n_i(k)= 0 $ for $k>0.$ This means that there are $n\varkappa_i$
appearances of each $|a_i|$ in the list $|a_1|,\dots ,|a_r|.$

In a similar way, looking at the word $w(x,y^M)$ for $M$ big enough
and prime to $p$, we conclude that there are precisely $n\tau_i$
appearances of each $|b_i|$ in the list $|b_1|,\dots ,|b_r|$ where
$\tau_i$ is the order of the corresponding root of $F_m.$ Moreover,
$\sum \varkappa_i=\sum  \tau_i= m.$

Define a word
$$v(x,y)=x^{t_1}y^{k_1}\dots x^{t_m}y^{k_m}$$ of complexity $m$
in such a way that among the $|t_i|$ there will be $\varkappa_i$ of
the $|a_i|$ and among the $|k_i|$ there will be $\tau_i$ of the
$|b_i|.$ By construction, $v^n(x,y)$ is trace-similar to $w(x,y)$
which completes the proof.
\end{proof}

\begin{remark}
In contrast with \propref{decom}, in \propref{sin} we do not exclude
the case $A=B=0$.
\end{remark}

In some particular cases, \propref{sin} provides even more information.

\begin{prop}\label{prop:r2}
With the notation and assumptions of Proposition $\ref{sin}$, assume
also that
\begin{itemize}
\item  $n=r;$
\item $A\ne 0 $ or $B\ne 0;$
\item char $\BF=p>0;$
\item  $p>{r}.$
\end{itemize}
Then $w(x,y)=(x^{\al}  y^{\be})^r$ where $\al=A/r$, $\be=B/r$.
\end{prop}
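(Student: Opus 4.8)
The plan is to bootstrap from Proposition \ref{sin}, which under the stated hypotheses already yields $w(x,y) \approx v(x,y)^r$ for some word $v$ of complexity $m = r/n = 1$; that is, $v(x,y) = x^{\al'} y^{\be'}$ for some nonzero integers $\al', \be'$, and $w$ is trace-similar to $(x^{\al'} y^{\be'})^r$. The remaining task is to promote trace-similarity to genuine equality and to pin down the exponents, using the extra input that $n = r$ (so the inner polynomial $q$ has $\deg_u q = 1$) and that $A \ne 0$ or $B \ne 0$. First I would invoke Proposition \ref{decom}: since $f_w = h \circ q$ with $\deg h \ge 2$ and ($A \ne 0$ or $B \ne 0$), we get $h = D_d$ with $d \ge 2$ dividing both $A$ and $B$; combined with $\deg h = n = r$ this forces $d = r$ (as $D_d$ has degree $d$), so $r \mid A$ and $r \mid B$. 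Hence if $w = (x^{\al'} y^{\be'})^r$ up to trace-similarity, comparing the exponent sums gives $A = \sum (\pm \al') = r\al'$ only if all signs agree; this is exactly what needs to be proved, so trace-similarity alone is not enough and the sign information must be extracted.

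The key step is therefore to rule out sign cancellation. Here I would use the specialization $f_w(s,s,2) = D_A(s)$ from the proof of Proposition \ref{decom} together with the formula $f_w(s,u,t) = \sum_{k=0}^r u^k G_k(s,t)$ from Proposition \ref{prop:BG}, where $G_r(s,t) = \prod_{i=1}^r g_{a_i,b_i}(s,t)$. Since $h = D_r$, writing $q(s,u,t) = u H_1(s,t) + H_0(s,t)$ (degree one in $u$ because $n = r$), the leading-in-$u$ term of $D_r(q)$ is $u^r H_1(s,t)^r$, so $G_r(s,t) = H_1(s,t)^r$ (up to the leading scalar of $D_r$, which is $1$). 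Thus $\prod_{i=1}^r g_{a_i,b_i}(s,t)$ is a perfect $r$-th power in $\BF[s,t]$. Now use formula \eqref{fab}: after the substitution $s = x + x^{-1}$, $t = x + x^{-1}$, we have $g_{a_i,b_i}(s,s) = \pm (x^{|a_i|} - x^{-|a_i|})(x^{|b_i|} - x^{-|b_i|})/(x - x^{-1})^2$. Because $p > r \ge \bar A$ exponents wise — more precisely because $p$ exceeds all $|a_i|$, $|b_j|$ (which are each at most, well, one needs $\bar A, \bar B$ bounded, but at minimum $p > r$ handles the multiplicity-counting argument as in Proposition \ref{sin}) — the order of vanishing of $\prod g_{a_i,b_i}(s,s)$ at a primitive root of $x^k = 1$ counts (with multiplicity) how many $|a_i|$ equal $k$, and the $r$-th power condition forces each such count to be a multiple of $r$. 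Since there are exactly $r$ indices $i$, this means all $|a_i|$ are equal, say to $\al' > 0$, and similarly all $|b_i|$ are equal to $\be' > 0$. Then $\bar A = r\al'$ and $\bar B = r\be'$.

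Finally I would combine this with the sign data. From $r \mid A$ and $|A| \le \bar A = r\al'$ with equality in the divisibility forcing $A \in \{0, \pm r\al'\}$; but $A$ is a sum of $r$ terms each equal to $\pm \al'$, so $A = r\al'$ forces every $a_i = +\al'$ (and $A = -r\al'$ forces every $a_i = -\al'$, which after replacing $w$ by a cyclic/inverse-conjugate normalization we may assume is $+\al'$; more carefully, since we only need the conclusion $w = (x^{\al} y^{\be})^r$ with $\al = A/r$, the case $A = -r\al'$ gives $\al = -\al'$ and all $a_i = \al$). The only residual possibility is $A = 0$, which by the hypothesis $A \ne 0$ or $B \ne 0$ we handle symmetrically via $B$: if $A = 0$ then $B \ne 0$, and the same argument on the $b_i$ gives all $b_i = \be = B/r \ne 0$ and all $|a_i| = \al'$; but then I still need all $a_i$ to share a sign. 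To get this, apply the specialization $f_w(s,2,s) = D_{A-B}(s)$ and $f_w(s, s^2-2, s) = D_{A+B}(s)$: these also decompose through $q$, and reading off the degree-$r$ coefficient in a suitable variable forces analogous $r$-th-power identities whose root-order bookkeeping (again using $p > r$) pins down the signs of the $a_i$ relative to each other. Assembling all four specializations $A$, $B$, $A - B$, $A + B$ simultaneously — each a Dickson polynomial $D_{(\cdot)}$ of degree $r$ times a unit, forcing the corresponding exponent sum to have absolute value exactly $r\al'$ or $r\be'$ or $r|\al' \pm \be'|$ — leaves no room for mixed signs, and we conclude $w(x,y) = (x^\al y^\be)^r$ with $\al = A/r$, $\be = B/r$. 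The main obstacle is this last sign-cancellation argument: trace-similarity and the perfect-power structure of $G_r$ only control absolute values, and one genuinely needs the several evaluations together with the hypothesis $p > r$ (to keep Frobenius from collapsing the multiplicity count) to exclude configurations like half the $a_i$ positive and half negative.
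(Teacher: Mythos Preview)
Your sign-cancellation argument has a genuine gap. The claim that $r\mid A$ together with $|A|\le \bar A = r\al'$ and $A=\sum_{i=1}^r(\pm\al')$ forces $A\in\{0,\pm r\al'\}$ is false: take $r=4$, $\al'=2$, $(a_i)=(2,2,2,-2)$, so $A=4$ is divisible by $r$ yet $0<|A|<r\al'$. Knowing only that $G_r=H_1^r$ is a perfect $r$-th power, without identifying $H_1$, cannot rule out such mixed-sign configurations; your closing appeal to ``all four specializations simultaneously'' supplies no concrete mechanism. A secondary issue: your opening invocation of Proposition~\ref{sin} requires the characteristic to exceed $\max\{|a_i|,|b_j|\}$, not merely $p>r$, so it is not justified under the stated hypotheses.

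The missing ingredient, which the paper supplies, is an explicit computation of $H_1(s,s)$. From $f_w=D_r\circ q$ and the specializations $f_w(s,2,s)=D_{A-B}(s)$, $f_w(s,s^2-2,s)=D_{A+B}(s)$, the Ritt-type uniqueness of Dickson decompositions (Proposition~\ref{RTGC}, Remark~\ref{rem:RTGC}) gives $q(s,2,s)=D_{\al-\be}(s)$ and $q(s,s^2-2,s)=D_{\al+\be}(s)$ with $\al=A/r$, $\be=B/r$; then formula \eqref{gg1} together with \eqref{fab} yields $H_1(s,s)=\pm g_{\al,\be}(s,s)$. Combining this with $H_1^r=G_r=\prod g_{a_i,b_i}$, and separating the $a$- and $b$-factors via the substitution $w\mapsto w(x^N,y^M)$, one obtains the identity $(x^{|\al|}-x^{-|\al|})^r=\pm\prod_i(x^{|a_i|}-x^{-|a_i|})$. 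A degree comparison now gives $r|\al|=\bar A$, i.e.\ $|A|=\bar A$, forcing all $a_i$ to share a sign; likewise for the $b_j$. Only \emph{after} this does the root analysis (where $p>r$ is genuinely used, to control $p$-power collisions) show that all $|a_i|$ are equal. So the paper's order is the reverse of yours: signs first via a degree identity, then absolute values via roots.
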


\begin{proof}
First note that under the hypotheses of the proposition, the
assumptions of \propref{decom} are also satisfied. In particular,
both $A$ and $B$ are divisible by $r$ and hence $\al$ and $\be$ are
integers. We also have

 $$f_w=D_r(q(s,u,t)),  \  q(s,u,t)=G(s,t)u+H(s,t),$$
 $$q(s,2,s)=D_{\al-\be} (s), \
 q(s,s^2-2,s)=D_{\al+\be} (s).$$

Hence, similarly to \eqref{fab}, we have:
$$G(s,s) =\pm
 \frac{(x^{|\al|}-x^{-|\al|})(x^{|\be|}-x^{-|\be|})}
{(x-x^{-1})^2}.$$

It follows that for any $N,M$ we have {\footnotesize{
$$\left(
 \frac{(x^{N|\al|}-x^{-N|\al|})(x^{M|\be|}-x^{-M|\be|})}
{(x-x^{-1})^2}\right)^r=\pm \frac{\prod\limits_{i=1}^{r}
(x^{N|a_i|}-x^{-N|a_i|})(x^{M|b_i|}-x^{-M|b_i|})}
{(x-x^{-1})^{2r}}.$$ }} Hence,
$$
 (x^{|\al|}-x^{-|\al|})^r=\pm\prod\limits_{i=1}^{r}
(x^{|a_i|}-x^{-|a_i|}),$$

$$(x^{|\be|}-x^{-|\be|})^r=\pm\prod\limits_{i=1}^{r}(x^{|b_i|}-x^{-|b_i|}).$$
Comparing the degrees of the corresponding polynomials, we get
$$
\begin{aligned}
|A| & =|\al|r=\sum\limits_{i=1}^{r}|a_i|=\bar{A}, \\
|B| & =|\be|r=\sum\limits_{j=1}^{r}|b_j|=\bar{B}.
\end{aligned}
$$
Hence, all the $a_i$ are of the same sign, and so are all the $b_j$.
Let $|\al|=\tilde\al p^\tau,$ \ $|\be|=\tilde\be p^\varkappa.$
Comparing simple roots of the polynomials, we get
$$|a_i|=\tilde\al p^{k_i}, \  |b_j|=\tilde\be p^{s_i}$$
for every $1\le i\le r$ and every $1\le j\le r.$ Moreover,
\begin{equation}\label {nnn1}
\bar{A}=\tilde\al p^\tau r= \tilde\al\sum_{k\ge 0}
n(k)p^{k},\end{equation} where $n(k)$ denotes the number of
appearances of $|\tilde \al|p^k$ in the list $|a_1|,\dots ,|a_r|$,
and
$$
\bar{B}=\tilde\be p^\varkappa r= \tilde\be\sum_{k\ge 0} m(k)p^{k}
$$
where $m(k)$ denotes the number of appearances of $|\tilde \be|p^k$
in the list $|b_1|,\dots ,|b_r|.$

Consider formula \eqref {nnn1}. Let $K=\max\{k    \ | \ n(k)\ne
0\}.$ Suppose that $\tau>0$. Then
$$
p^\tau r= \sum_{k= 0}^{K} n(k)p^{k}\le\left(\sum_{k= 0}^{K}
n(k)\right)p^K=rp^K.
$$
Thus $\tau\le K.$ It follows that $p^\tau \mid\sum_{k= 0}^{\tau-1}
n(k)p^{k}$, and hence the latter sum equals $sp^\tau$ for some
integer $s.$ On the other hand,
$$
sp^\tau=\sum_{k= 0}^{\tau-1} n(k)p^{k}\le \left(\sum_{k= 0}^{\tau-1}
n(k)\right)p^{\tau-1}\le rp^{\tau-1}<p^\tau.
$$
Contradiction shows that $s=0,$ and $ n(k)=0$  for $k<\tau.$
Dividing \eqref{nnn1} by $  p^\tau$, we get
$$
\sum_{k= \tau}^{K} n(k)p^{k-\tau}=r.
$$
This equality remains true in the case $\tau=0.$ On the other hand,
by the definition of $n(k),$ we have
$$
\sum_{k= \tau}^{K} n(k)=\sum_{k= 0}^{K} n(k)=r.
$$
Hence $n(k)=0$ for $  k\ne \tau,$ i.e., $K=\tau$, $n(\tau)=r$,
$|a_i|=\tilde \al p^\tau=\al.$ Since the $a_i$ are of the same sign,
they are all equal. In a similar way, we conclude that all
$b_j=\tilde \be p^\varkappa=\be$ are equal. Hence $w(x,y)=(x^\al
y^\be)^r.$\end{proof}

\begin{prop}\label{trsim}
Let $w(x,y)=x^ay^b\dots $ be a reduced word of complexity $r$ such
that $f_w(s,u,t)=D_r(q(s,u,t))$, $q\in \BF[s,u,t]$, over $\BF=\BQ$
or some $\BF=\BF_p$ with $p>r/2$, $p\ne r$. If $w(x,y)$ is
trace-similar to $(x^{a}y^{b})^r$, then $w(x,y)=(x^ay^b)^r.$
\end{prop}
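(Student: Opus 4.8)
The plan: pin down the one-variable restrictions of the inner polynomial $q$ using the decomposition theory of Dickson polynomials, then read off the signs of the exponents of $w$ from a degree count for Laurent polynomials in one variable. In more detail:

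\emph{Setup.} By \propref{prop:BG} one has $\deg_u f_w=r$, while $\deg_u D_r(q)=r\deg_u q$; hence $\deg_u q=1$, and we write $q(s,u,t)=uG(s,t)+H(s,t)$. Comparing the coefficients of $u^r$ and using that $D_r$ is monic, $G(s,t)^r=\prod_{i=1}^r g_{a_i,b_i}(s,t)$. The hypothesis $w\approx(x^ay^b)^r$ means $|a_i|=|a|$ and $|b_i|=|b|$ for all $i$, so by~\eqref{fab}, on the diagonal $s=t=x+x^{-1}$,
\[
G(s,s)^r=\prod_{i=1}^r g_{a_i,b_i}(s,s)=\pm\left(\frac{(x^{|a|}-x^{-|a|})(x^{|b|}-x^{-|b|})}{(x-x^{-1})^2}\right)^{r}\neq 0 .
\]

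\emph{The four slices.} Substituting $y=\id$, $x=\id$, $x=y^{-1}$, $x=y$ into $f_w=D_r(q)$ and using $\tr(g^{-1})=\tr(g)$ gives
\[
D_r(q(s,s,2))=D_A(s),\qquad D_r(q(2,t,t))=D_B(t),
\]
\[
D_r(q(s,2,s))=D_{A-B}(s),\qquad D_r(q(s,s^2-2,s))=D_{A+B}(s),
\]
where $A=\sum_i a_i$ and $B=\sum_i b_i$. Since $p>r/2$ and $p\neq r$ force $p\nmid r$, we may apply \propref{RTGC} and \remarkref{rem:RTGC} to each relation $D_N=D_r\circ(\cdot)$ with $N\neq0$: they force $r\mid N$ and the inner polynomial to equal $D_{N/r}$ (the constant $c$ of \propref{RTGC} must vanish, since $D_r(z)-D_r(z-c)$ has leading term $rcz^{r-1}$ and $r$ is invertible). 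If $AB=0$ then $A-B=\pm(A+B)$, hence $D_{A-B}=D_{A+B}$; by the above $q(s,2,s)$ and $q(s,s^2-2,s)$ are then equal (to $D_{|A-B|/r}(s)$ if $A-B\neq0$, and equal constants otherwise), so $G(s,s)=\bigl(q(s,s^2-2,s)-q(s,2,s)\bigr)/(s^2-4)\equiv0$, contradicting the previous display. Hence $A\neq0$ and $B\neq0$. Finally, for any integer $N\neq0$ with $NA\neq\pm B$ the word $w(x^N,y)$ again meets all hypotheses of the proposition, with $A$ replaced by $NA$ and $B$ unchanged, and $w(x^N,y)=(x^{Na}y^b)^r$ would yield $w=(x^ay^b)^r$; so it suffices to treat the case $A\neq\pm B$.

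\emph{Closed form and degree count.} Now $A$, $B$, $A-B$, $A+B$ are all nonzero; set $\al=A/r$, $\be=B/r$, which are integers by \propref{decom}. The previous paragraph gives $q(s,2,s)=D_{\al-\be}(s)$ and $q(s,s^2-2,s)=D_{\al+\be}(s)$, so by~\eqref{gg1}, with $s=x+x^{-1}$,
\[
G(s,s)=\frac{D_{\al+\be}(s)-D_{\al-\be}(s)}{s^2-4}=\frac{(x^{\al}-x^{-\al})(x^{\be}-x^{-\be})}{(x-x^{-1})^2}.
\]
Raising this to the $r$-th power and equating with the expression for $G(s,s)^r$ from the setup gives the identity of Laurent polynomials
\[
(x^{\al}-x^{-\al})^r(x^{\be}-x^{-\be})^r=\pm(x^{|a|}-x^{-|a|})^r(x^{|b|}-x^{-|b|})^r ,
\]
and comparing the highest powers of $x$ (all factors are nonzero) yields $|\al|+|\be|=|a|+|b|$. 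On the other hand $|\al|=\bigl|\sum_i a_i\bigr|/r\le\bigl(\sum_i|a_i|\bigr)/r=|a|$, with equality exactly when all the $a_i$ have one common sign, and likewise $|\be|\le|b|$. Hence both inequalities are equalities: all $a_i$ have a common sign and all $b_i$ have a common sign, and since $|a_i|=|a|$, $|b_i|=|b|$, $a_1=a$, $b_1=b$, it follows that $a_i=a$ and $b_i=b$ for every $i$, i.e.\ $w=(x^ay^b)^r$.

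The step I expect to be the main obstacle is establishing the closed form for $G(s,s)$ uniformly — that is, the identifications $q(s,2,s)=D_{\al-\be}(s)$ and $q(s,s^2-2,s)=D_{\al+\be}(s)$ across all characteristics and all configurations of $(A,B)$; this is where the hypothesis $p\nmid r$ and the Dickson decomposition results are essential, and where reducing to $A\neq\pm B$ (disposing of $A=0$, $B=0$ and $A=\pm B$) earns its keep. With $G(s,s)$ in hand, the concluding degree comparison is routine.
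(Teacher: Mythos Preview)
Your argument is correct and follows a genuinely different route from the paper's. The paper proves \propref{trsim} via an auxiliary combinatorial lemma (\lemref{tr}) established by induction on the complexity $r$, with explicit base cases $r\le 4$: it tracks the $u^{r-1}$ coefficient of $f_w$ (which is $-\ep mst$ with $0\le m\le r/2$, and $m=0$ iff $w=(xy)^r$) together with the total degree of the constant term in $u$, then forces $m=0$ by comparing with the expansion of $D_r(\alpha\tilde u+\beta)$. The hypothesis $p>r/2$ is used precisely so that $m\le r/2$ implies $m\not\equiv 0\pmod p$ when $m\ne 0$.

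You instead adapt the diagonal-restriction technique the paper uses for \propref{prop:r2}: identify $q(s,2,s)$ and $q(s,s^2-2,s)$ as Dickson polynomials via \propref{RTGC} and \remarkref{rem:RTGC}, compute $G(s,s)$ in closed form, and then finish by a clean Laurent-polynomial degree comparison exploiting the trace-similarity hypothesis (which gives $|a_i|=|a|$, $|b_i|=|b|$, so the right-hand side is already an $r$-th power). This avoids the case-by-case induction entirely and, since you only use $p\nmid r$ (from $p>r/2$, $p\ne r$) to force the shift constant $c=0$ in the Dickson decomposition, your argument in fact works under the weaker hypothesis $p\nmid r$. One small point: in the case $A=B=0$ you assert that $q(s,2,s)$ and $q(s,s^2-2,s)$ are \emph{equal} constants, but you do not need equality---their difference is constant, and since $G(s,s)=(q(s,s^2-2,s)-q(s,2,s))/(s^2-4)$ is a polynomial in $s$, that constant must vanish and $G(s,s)\equiv0$, which is the contradiction you want.
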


\begin{proof}
By assumption, $w$ is the product of syllables  $x^{\pm a}y^{\pm
b}.$

Assume that by cyclic permutation and exchanging roles of $x$ and
$y$ one can modify $w$ to a word $v=v_1\dots v_r$, $v_k=x^{\pm
a}y^{\pm b}$, $k=1,\dots, r$, which contains repeated syllables,
i.e., such that for some $i<j$ we have $v_i=v_j$. Then we consider
the word
$$
\tilde v=v_i\dots v_j\dots v_rv_1\dots v_{i-1}.
$$
The word $\tilde v$ will be called a {\it convenient} form of $w.$
Note that either $f_w(s,u,t)=f_{\tilde v}(s,u,t)$  or
$f_w(s,u,t)=f_{\tilde v}(t,u,s).$ If this procedure is impossible,
we say that $w$ is already in a convenient form. First consider the
case where $a=b=1$  and $\BF=\BQ.$

\begin{Lemma}\label{tr}
Let $w(x,y)=xy\dots x^{\pm 1}y^{\pm 1}=w_1\dots w_r$ be a word in a
convenient form, where $w_i$ are syllables of the form $x^{\pm
1}y^{\pm 1}.$ Let $\BF=\BQ.$

Let $u=\tr(xy), s=\tr(x), t=\tr(y).$ Then
$$
f_w(s,u,t)= \ep u^r-\ep m st u^{r-1}+\dots +g(s,t)
$$
is a polynomial of degree $r$ with respect to $u$ such that
\begin{itemize}
\item
the coefficient at  $u^r$ is $\ep=\pm 1;$
\item $m$ is a non-negative integer, $m\leq r/2$,
and $m=0$ if and only if $w=(xy)^r;$
\item the coefficient at $u^{r-1}$ is $\ep m  st;$
\item the coefficient $f_w(s,0,t)$ at $u^{0}$ is a polynomial $g$ in $s,t$
of total degree strictly less than $2r.$
\end{itemize}
\end{Lemma}

It is important here that we defined $u$ as the trace of the first
syllable.

\begin{proof}[Proof of Lemma $\ref{tr}$]
First consider the case when there are no repeated syllables.

\begin{description}
\item[r=1]
$\tr(xy)=u$.
\item[r=2]\begin{itemize}
\item $\tr(xyxy^{-1})=-u^2+ust-t^2+2, $
\item $\tr(xyx^{-1}y^{-1})=u^2-ust+t^2+s^2-2, $
\item $\tr(xyx^{-1}y)=\tr(yxyx^{-1})=-u^2+ust-s^2+2$.
\end{itemize}
\item[r=3]\begin{itemize}
\item
the words $a_1=x{\bf yx^{-1}yx^{-1}}y^{-1},$ $a_2={\bf xyx}
y^{-1}x^{-1}{\bf y}$ and  \newline $a_3={\bf x}yx^{-1} {\bf
y^{-1}xy^{-1}}$ are not in a convenient form;
\item for $a_4=xyx y^{-1}x^{-1}y^{-1},$ we have
$$
\begin{aligned}
\tr(a_4)& =f_{a_4}(s,u,t)=(ust-u^2-t^2+2)u-\tr(x^3y)\\
& =(ust-u^2-t^2+2)u-u(s^2-2)+(st-u) \\
& = -u^3+stu^2+u(3-t^2-s^2)+st;
\end{aligned}
$$
\item
the word $a_5=xyx^{-1}y^{-1}x^{-1}y$ may be modified to $a_4$ by
cyclic permutation and exchanging roles of $x$ and $y,$ thus
$\tr(a_5)=\tr(a_4);$
\item
$a_6=xyx^{-1}yx y^{-1}$ may be  modified to $a_4$ by cyclic
permutation and changing roles of $y$ and $y^{-1},$ thus
$$\tr(a_6)=f_{a_4}(s,st-u,t)=u^3-u^2st+u(s^2t^2+t^2+s^2-3)+st(4-t^2-s^2).$$
\end{itemize}
\item[r=4]\begin{itemize}
\item
$b_1={\bf xyx} y^{-1}x^{-1}y^{-1}x^{-1}{\bf y}$ and $b_2=x{\bf
yx^{-1}yx^{-1}}y^{-1}x y^{-1}$ are not in a convenient form;
\item
for $b_3=xyx y^{-1}x^{-1}yx^{-1}y^{-1}$  we have
$$
\begin{aligned}
\tr(b_3) & =(ust-u^2-t^2+2)^2-\tr(x^2yx^2y^{-1})
=(ust-u^2-t^2+2)^2 \\
& -[(us-t)(s^2-2)t-(us-t)^2-t^2+2]\\
&=u^4-2u^3st +u^2h_1+uh_2+ (t^2-2)^2+t^2(s^2-2)+2t^2-2,
\end{aligned}
$$
where $h_1,h_2$ are polynomials in $s,t;$
\item
$b_4=xy x^{-1}y  x y^{-1}x^{-1}y^{-1} $ may be modified to $b_3$ by
cyclic permutation, and substituting $x$ by $y^{-1}$ and $y$ by
$x^{-1};$
\item
$b_5=xy x^{-1}y^{-1} x y^{-1}x^{-1}y$ may be modified to $b_3$ by
cyclic permutation, and substituting $x$ by $y$ and $y$ by $x;$
\item
$b_6=xy x^{-1}y^{-1} x^{-1}yx y^{-1}$ may be modified to $b_3$ by
cyclic permutation, and substituting $x$ by $x^{-1}$ and $y$ by $
y^{-1}.$
\end{itemize}
Note that these substitutions do not change $u$, and the coefficient
$m$ is not zero in convenient words.
\end{description}

Any word of complexity $\geq 5$ must have repeated syllables. The
case with repeated syllables will be proved by induction on the
complexity $r.$ Assume that for all words in a convenient form of
complexity $k<r$ the statement of the lemma is valid.

Consider $w(x,y)=w_1\dots w_r$ where $w_1=xy$, $w_i=x^{\pm 1}y^{\pm
1}$, $i=2,\dots ,r$, $w_{j+1}=w_1$, $0<j\leq r-1.$ Thus $w=v_1v_2$
where $v_1=w_1\dots w_{j}$, $v_2=w_{j+1}\dots w_r.$ Denote
$v_3=v_1v_2^{-1},$ it is of complexity $r-2$ since its first
syllable is $xy$ and the last is $(xy)^{-1}.$ By induction
hypothesis,
$$
\begin{aligned}
\tr(v_1) & =\ep_1 u^j-\ep_1 m_1 stu^{j-1}+\dots + g_1, \deg g_1 <
2j;\\
\tr(v_2) & =\ep_2 u^{r-j}-\ep_2 m_2 stu^{r-j-1}+\dots +g_2, \deg
g_2< 2(r-j).
\end{aligned}
$$
The word $v_3$ may not be in a convenient form. This means that
$u=\tr (xy)$ may not be the trace of the first syllable of $v_3.$
Anyway,
$$\tr(v_3)=\ep_3 \hat u^{r-2}-\ep_3 m_3 st\hat u^{r-3}+\dots +g_3, \deg g_3< 2(r-2),$$
where $\hat u $ is either $u$ or $st-u.$  In both cases its degree
with respect to $u$ is at most $r-2$ and the coefficient at $u^0$ is
of total degree at most $2(r-2).$ Therefore
$$
\begin{aligned}
\tr(w) & =\tr(v_1)\tr(v_2)-\tr(v_3) \\
& =\ep_1\ep_2u^r-\ep_1\ep_2st(m_1+m_2)u^{r-1}+\dots +g_1g_2-g_3.
\end{aligned}
$$
Here the total degree of the polynomial $g_1g_2-g_3,$ which is the
coefficient at $u^0,$   is less than $2j+2(r-j)=2r.$ Moreover,
$m_1+m_2$ may be zero only if $m_1=m_2=0,$ which means, by induction
hypothesis, that $v_1=w_1^j$, $v_2=w_1^{r-j}$, so $w=w_1^r.$
\end{proof}

We continue the proof of \propref{trsim}: assume that $\BF=\BQ$  or
$\BF_p$, $p>r.$ Assume that $w(x,y)=w_1 \dots w_r$, where
$w_1=x^ay^b$, $w_i=x^{\pm a}y^{\pm b}$, is written in a convenient
form, and $f_w(s,u,t)=D_r(q(s,u,t)).$ We denote $z=x^a,v=y^b,$ i.e.,
$w(x,y)=\tilde w(z,v),$ and $\tilde w$ is a word of the type
considered in \lemref{tr}. Let $\tilde s=D_{a}(s)$, $\tilde
t=D_{b}(t),$ and $\tilde u=\tr(x^ay^b)= ug_{a,b}(s,t)+h_{a,b}( s,
t),$ where $g_{a,b},h_{a,b}$ are polynomials in $s,t$ and
$g_{a,b}\not\equiv 0$ (see \propref{prop:BG}). Since the polynomial
$q(s,u,t)$ is of degree $1$ with respect to $u,$ we have $q(s,u,t)=
\al(s,t)\tilde u +\be(s,t),$ with rational coefficients $\al$ and
$\be.$ According to \lemref{tr}, we have
\begin{equation}\label{tr2}\begin{aligned}
f_w(s,u,t) & = \ep \tilde u^r-\ep m \tilde s\tilde t \tilde
u^{r-1}+\dots + g(\tilde s,\tilde t)= q^r-rq^{r-2}+\dots
\\
& =(\al(s,t)\tilde u +\be(s,t))^r -r(\al(s,t)\tilde u
+\be(s,t))^{r-2}+\dots
\end{aligned}\end{equation}

Moreover, if $m\ne 0$ then $m\not\equiv 0\pmod p$, since $m\leq
r/2<p. $ It follows that
$$
\al(s,t)=\al={\text{\rm{const}}},  \ \al^r=\ep, \text{ and }
\be(s,t)=-\frac{\ep m \tilde s\tilde t}{r\al^{r-1}}=-\frac{ m\al
\tilde s\tilde t}{r}$$ (division is legitimate because $p\ne r$).
Substituting $q=\al\tilde u-m\al \tilde s\tilde t/r$ into
\eqref{tr2}, we get
$$
\begin{aligned}
f_w(s,u,t) & = \ep \tilde u^r-\ep m \tilde s\tilde t \tilde u^{r-1}+\dots + g(\tilde s,\tilde t)\\
& =\left(\al\tilde u-m \al\tilde s\tilde t/r\right)^r
-r\left(\al\tilde u-m\al \tilde s\tilde t/r\right)^{r-2}+\dots
\end{aligned}
$$

Thus, the coefficient at $(\tilde u)^0$ is a polynomial in $\tilde
s\tilde t$ of total degree $r,$ hence it is a polynomial in $\tilde
s,\tilde t$ of total degree $2r,$ which implies, by  \lemref{tr},
that $\be\equiv 0 $  and $ \tilde w=(zv)^r.$
\end{proof}

\begin{corollary}\label{c=n}
Let $w(x,y)=x^ay^b\dots $ be a reduced word of complexity $r$ such
that $f_w(s,u,t)=D_r(q(s,u,t))$ over $\BQ $. Then
$w(x,y)=(x^ay^b)^r$.
\end{corollary}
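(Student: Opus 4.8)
The plan is to derive Corollary \ref{c=n} by feeding the hypothesis $f_w=D_r(q)$ first into Proposition \ref{sin} and then into Proposition \ref{trsim}. If $r=1$ there is nothing to prove, since a reduced word of complexity $1$ that begins with $x^ay^b$ equals $x^ay^b=(x^ay^b)^1$; so I would assume $r\ge 2$. Then $f_w=D_r(q(s,u,t))$ is a genuine $\mathbb{Q}$-composition whose outer polynomial is $h=D_r$, of degree $n:=r\ge 2$ with leading coefficient $1\ne 0$, and (using $\deg_u f_w=r$ from Proposition \ref{prop:BG}) $\deg_u q=1$, so in particular $\deg q\ge 1$. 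Thus the hypotheses of Proposition \ref{sin} are met over $\mathbb{F}=\mathbb{Q}$.

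Applying Proposition \ref{sin} with $n=r$ gives $r=nm$, hence $m=1$, and — since $\operatorname{char}\mathbb{Q}=0$ — it tells us that $w(x,y)$ is trace-similar to $v(x,y)^r$ for some word $v$ of complexity $1$, say $v=x^{\alpha}y^{\beta}$ with $\alpha,\beta\ne 0$.

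The next step is to pin $v$ down up to trace-similarity. From $w=x^{a_1}y^{b_1}\cdots x^{a_r}y^{b_r}\approx(x^{\alpha}y^{\beta})^r$, the multiset $\{|a_i|\}$ is a rearrangement of $\{|\alpha|,\dots,|\alpha|\}$, so $|a_i|=|\alpha|$ for every $i$; in particular $|a|=|a_1|=|\alpha|$, and likewise $|b|=|\beta|$. Hence $(x^{\alpha}y^{\beta})^r$ and $(x^ay^b)^r$ have the same multisets of absolute values of their $x$-exponents and of their $y$-exponents, i.e.\ $(x^{\alpha}y^{\beta})^r\approx(x^ay^b)^r$. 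Since $\approx$ is an equivalence relation, $w\approx(x^ay^b)^r$.

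Finally I would invoke Proposition \ref{trsim} over $\mathbb{F}=\mathbb{Q}$: the word $w$ is reduced of complexity $r$, starts with $x^ay^b$, satisfies $f_w=D_r(q)$, and has just been shown to be trace-similar to $(x^ay^b)^r$; the proposition then forces $w(x,y)=(x^ay^b)^r$. No step here is a serious obstacle — the only thing requiring care is the bookkeeping in the third paragraph, which guarantees that the word $v$ supplied by Proposition \ref{sin} is $x^{\pm a}y^{\pm b}$ (up to the signs of the exponents), so that Proposition \ref{trsim} may legitimately be applied with the specific target word $(x^ay^b)^r$.
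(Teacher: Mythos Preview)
Your proof is correct and follows exactly the route the paper takes: invoke Proposition~\ref{sin} (with $n=r$, hence $m=1$) to obtain trace-similarity to a word $(x^{\alpha}y^{\beta})^r$, then invoke Proposition~\ref{trsim} to upgrade trace-similarity to equality. The paper's proof is a two-line sketch that skips the bookkeeping you spell out in your third paragraph (why $|\alpha|=|a|$, $|\beta|=|b|$, so that the target in Proposition~\ref{trsim} is literally $(x^ay^b)^r$); your version makes that step explicit, which is helpful but not a different argument.
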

\begin{proof}
According to \propref{sin}, every such $w$ is trace-similar to
$(x^ay^b)^r.$ It remains to apply \propref{trsim}.
\end{proof}

\begin{prop}\label{r-prime}
With Notation $\ref{not}$, if $r$ is prime and $w$ is not
$p$-equi\-dis\-tri\-bu\-ted, then $r\ne p$ and at least one of $A$
and $B$ is nonzero.
\end{prop}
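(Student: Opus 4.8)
The plan is to argue by contradiction, extracting structure from the hypothesis that $w$ is not $p$-equidistributed. By \thmref{p-equi}, this hypothesis means precisely that the reduced trace polynomial $f_{w,p}$ is $\BF_p$-composite but \emph{not} $p$-special; fix a decomposition $f_{w,p}=h\circ Q$ with $\deg h=n\ge 2$ and $Q\in\ov{\BF}_p[s,u,t]$ noncomposite (peel off inner factors if necessary). The first thing I would establish is the degree identity $\deg_u f_{w,p}=r$, valid for \emph{every} $p$: writing $f_w=\sum_{k=0}^r u^kG_k(s,t)$ over $\BZ$, \propref{prop:BG} gives $G_r=\prod_{i=1}^r g_{a_i,b_i}$, and each $g_{a_i,b_i}$ has top-bidegree coefficient $\pm1$ (visible from \eqref{fab}), so the top-bidegree coefficient of $G_r$ is $\pm1$ and survives reduction mod $p$. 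Comparing $u$-degrees in $f_{w,p}=h\circ Q$ then gives $r=n\cdot\deg_u Q$; since $r$ is prime and $n\ge2$, this forces $n=r$ and $\deg_u Q=1$, so $Q=u\,G(s,t)+H(s,t)$ with $G\not\equiv0$. Comparing top $u$-coefficients also yields $\mu\,G(s,t)^r=\prod_{i=1}^r g_{a_i,b_i}(s,t)$, where $\mu\ne0$ is the leading coefficient of $h$; in particular $G(s,s)\not\equiv0$, since by \eqref{fab} every $g_{a_i,b_i}(s,s)$ is a nonzero polynomial mod $p$.

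To prove that $A$ and $B$ are not both zero, I would assume $A=B=0$ and run the substitutions from the proof of \propref{decom}. Putting $x=y^{-1}$ and $x=y$ gives $f_w(s,2,s)=D_{A-B}(s)=2$ and $f_w(s,s^2-2,s)=D_{A+B}(s)=2$ identically, i.e.\ $h\bigl(2G(s,s)+H(s,s)\bigr)=h\bigl((s^2-2)G(s,s)+H(s,s)\bigr)=2$. Since $\deg h\ge2$, the equation $h(z)=2$ has only finitely many roots, so over the infinite field $\ov{\BF}_p$ each of $2G(s,s)+H(s,s)$ and $(s^2-2)G(s,s)+H(s,s)$ must be a constant polynomial; subtracting, $(s^2-4)G(s,s)$ is constant, which forces $G(s,s)\equiv0$. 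This contradicts $G(s,s)\not\equiv0$ from the previous paragraph. Hence $A\ne0$ or $B\ne0$.

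Finally, to show $r\ne p$, I would assume $r=p$. Then $\deg h=n=r=p\ge2$, and since $A\ne0$ or $B\ne0$ by the previous step, \propref{decom} applies over $\ov{\BF}_p$ and yields $h=D_d(z)$ with $d\ge2$ dividing both $A$ and $B$. But $d=\deg h=p$, so $h=D_p(z)$, which by \exampref{ex:st-u} equals $z^p$ in $\BF_p[z]$; by \thmref{th:perm} this is a permutation polynomial of all finite extensions of $\BF_p$. Then the decomposition $f_{w,p}=h\circ Q$ exhibits $f_w$ as $p$-special, contradicting the hypothesis. Therefore $r\ne p$, which completes the plan.

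I expect the main obstacle to be the first step, namely securing $\deg_u f_{w,p}=r$ in arbitrarily small characteristic: \propref{prop:BG} records the factorization $G_r=\prod g_{a_i,b_i}$ only in characteristic zero or large, so one must independently check that the relevant leading coefficients are $\pm1$ and hence that reduction mod $p$ cannot collapse the $u$-degree. Once the degree bookkeeping is in place, the remaining two steps reduce cleanly to \propref{decom} together with the identity $D_p(z)=z^p$ mod $p$.
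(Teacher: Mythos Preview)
Your proposal is correct and follows essentially the same route as the paper: translate non-$p$-equidistribution into ``$f_{w,p}$ is $\BF_p$-composite but not $p$-special'' via \thmref{p-equi}, use primality of $r$ to force $\deg h=r$ and $Q$ linear in $u$, then rule out $A=B=0$ by the substitution argument that collapses $G(s,s)$ to zero, and rule out $r=p$ by invoking \propref{decom} to get $h=D_p=z^p$, hence $p$-special. Your treatment of the small-characteristic issue---showing directly that each $g_{a_i,b_i}$ has a unit leading coefficient so that $\deg_u f_{w,p}=r$ and $G(s,s)\not\equiv 0$ survive reduction mod every $p$---is in fact more explicit than the paper's, which simply asserts $\deg_u f_w=r$ and cites \cite[Lemma~2.3]{BG} for the non-vanishing of $g_{a_i,b_i}(s,s)$.
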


\begin{proof}  We maintain the notation of \propref{sin}.
Suppose that $w$ is not $p$-equidistributed. Then its trace
polynomial is ${\BF}_p$-composite, $f_w=h(q(s,u,t))$, and $h$ is not
$p$-special. Since $\deg_u f_w=r$, $h$ is not linear, and $\deg h$
divides $r$, we have $\deg h =r.$ In the notation of \propref{sin},
this means that $n=r$ and $q(s,u,t)=uG(s,t)+H(s,t).$ Consider two
cases.

{\bf Case 1.} $A=B=0.$  Then
$$f_w(s,2,s)=h(q(s,2,s))=D_{A-B}(s)\equiv 2,$$
$$f_w(s,s^2-2,s)=h(q(s,s^2-2,s))=D_{A+B}(s)\equiv 2.$$

Thus, by \eqref{ff1}, \eqref{ff2}, we have $q(s,2,s)\equiv c_1\in
\BF$, $q(s,s^2-2,s)\equiv c_2\in \BF.$ Since $G(s,s)$ is a
polynomial, from \eqref{gg1} for $s\ne\pm 2$ it follows that
$c_1=c_2$, $G(s,s)\equiv 0$, $H(s,s)=$ const. This would mean that
for at least one of the syllables we have $g_{a_i,b_i}(s,s) \equiv
0$, which is impossible for big powers of $p$ (see
\cite[Lemma~2.3]{BG}). It follows that this case does not occur.

{\bf Case 2.} At least one of $A$ and $B$ is not 0 and $r=p.$ In
this case, by \propref{decom} we have $h(z)=D_r(z)$ is a permutation
polynomial, thus  $h$ is $p$-special, contrary to the assumption on
$h$.
\end{proof}

\begin{corollary}\label{compos}
Let $w(x,y)=x^ay^b\dots $ be a reduced word of  prime complexity
$r.$ If $p >r$  and $w$ is not
$p$-equidistributed, then $w=v(x,y)^r.$
 \end{corollary}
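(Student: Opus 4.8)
The plan is to combine the structural results already established in this section. First I would invoke \propref{r-prime}: since $w$ is assumed not $p$-equidistributed and $r$ is prime, we conclude that $r\ne p$ and at least one of $A$ and $B$ is nonzero. This puts us squarely in the hypotheses we need for the subsequent propositions, and in particular rules out the degenerate case $A=B=0$.

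Next, since $w$ is not $p$-equidistributed, by \thmref{p-equi} its trace polynomial $f_w$ is $p$-composite and not $p$-special; write $f_{w,p}=h\circ q$ with $q\in\BF_p[s,u,t]$ noncomposite and $\deg h=n\ge 2$. As in the proof of \propref{r-prime}, since $\deg_u f_w=r$ and $n\mid r$ with $r$ prime and $n\ge 2$, we must have $n=r$. Now \propref{decom} applies (its hypotheses hold because at least one of $A,B$ is nonzero): we get $h=D_r(z)$ and $r$ divides both $A$ and $B$. Thus $f_{w,p}=D_r(q(s,u,t))$ with $q$ of degree $1$ in $u$.

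At this point I would apply \propref{prop:r2}, whose four hypotheses — $n=r$, $A\ne 0$ or $B\ne 0$, $\mathrm{char}\,\BF=p>0$, and $p>r$ — are exactly what we have assembled. Its conclusion is precisely that $w(x,y)=(x^{\al}y^{\be})^r$ with $\al=A/r$, $\be=B/r$, which is the desired statement (with $v(x,y)=x^{\al}y^{\be}$). So in fact the corollary is essentially a repackaging of \propref{prop:r2} once \propref{r-prime} and \propref{decom} have been used to verify that we are in the situation covered by \propref{prop:r2}.

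The only real point requiring care — and the place where one must be slightly attentive rather than merely citing — is the reconciliation of which proposition to invoke for the final step: \propref{prop:r2} directly gives $w=(x^{\al}y^{\be})^r$ under $p>r$, whereas the weaker conclusion $w\approx v^r$ coming from \propref{sin} would only suffice if one then also cited \propref{trsim}. Since here $p>r$, I would route through \propref{prop:r2} for a clean, self-contained argument; the alternative route (\propref{sin} plus \propref{trsim}) yields the same conclusion and is worth a parenthetical remark. No genuine obstacle remains beyond bookkeeping of hypotheses.
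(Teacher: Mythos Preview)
Your proposal is correct and follows essentially the same route as the paper's own proof: use \thmref{p-equi} together with \propref{r-prime} and \propref{decom} to reduce to $f_{w,p}=D_r(q(s,u,t))$ with $A\ne 0$ or $B\ne 0$, and then invoke \propref{prop:r2} (whose hypothesis $p>r$ is given) to obtain $w=(x^{\al}y^{\be})^r$. Your extra detail explaining why $n=r$ and your parenthetical remark about the alternative route via \propref{sin} and \propref{trsim} are accurate and harmless additions.
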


\begin{proof} If $w(x,y)$ is not $p$-equidistributed,
then  according to \thmref{p-equi}, \propref{decom}, and
\propref{r-prime}, we have $f_w=D_r(q(s,u,t)).$  By
\propref{r-prime}, either $A\ne 0$ or $B\ne 0.$ By
\propref{prop:r2}, we have $w=(x^\al y^\be)^r$ where $\al=A/r$,
$\be=B/r.$
\end{proof}

\begin{corollary}\label{r2}
The word  $ w(x,y)=x^ay^bx^cy^d$ is either equidistributed or equal
to $(x^ay^b)^2.$
\end{corollary}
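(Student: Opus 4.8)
The plan is to reduce the assertion, via the criteria of \secref{results}, to the results on words of prime complexity established above; the decisive feature is that $w=x^ay^bx^cy^d$ has complexity $r=2$, a prime. First I would dispose of the trivial alternative: if $w$ is equidistributed there is nothing to prove, so assume it is not. By \thmref{equi} the trace polynomial $f_w$ is then very composite, and hence by \thmref{p-equi} there is a prime $p$ for which $w$ is not $p$-equidistributed; equivalently, $f_{w,p}$ is $\BF_p$-composite but not $p$-special.

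Next I would invoke \propref{r-prime} with $r=2$: since $w$ is not $p$-equidistributed and $r$ is prime, it follows that $p\neq 2$ and that at least one of $A(w)=a+c$, $B(w)=b+d$ is nonzero. As $p$ is then an odd prime, $p>2=r$, so \corolref{compos} applies and yields $w=v(x,y)^2$; moreover, the proof of that corollary, through \propref{prop:r2}, identifies $v=x^{\al}y^{\be}$ with $\al=A(w)/2$ and $\be=B(w)/2$ (both nonzero, since $w$ involves both $x$ and $y$). The final step is to compare reduced words: $(x^{\al}y^{\be})^2=x^{\al}y^{\be}x^{\al}y^{\be}$ equals $w=x^ay^bx^cy^d$ exactly when $a=c=\al$ and $b=d=\be$, and this gives $w=(x^ay^b)^2$, as required.

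The place I expect to need the most care is the prime $p=2$, i.e.\ the case $p=r$: a priori \thmref{p-equi} leaves open that $f_{w,2}$ is $2$-composite but not $2$-special, which would contradict the statement. This is precisely what \propref{r-prime} excludes for complexity $2$, and I would make sure its proof genuinely applies here: the only use of the characteristic there is the non-vanishing of the polynomials $g_{a_i,b_i}(s,s)$, and for $a,b\neq 0$ one has $g_{a,b}(s,s)\not\equiv 0\pmod p$ for every prime $p$, since $D_{a+b}$ and $D_{a-b}$ are monic of distinct degrees. Alternatively, one can argue $p=2$ directly. The leading $u$-coefficient of $f_w$ is $g_{a,b}g_{c,d}$ (cf.\ \propref{prop:BG}; for $r=2$ this holds over any field, as $f_w=\tr(x^ay^b)\tr(x^cy^d)-\tr(x^{a-c}y^{b-d})$), so $\deg_u f_{w,2}=2$ and any decomposition $f_{w,2}=h\circ q$ has $\deg h=2$. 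If $A\neq 0$ or $B\neq 0$, \propref{decom} forces $h=D_d(z)$ for some $d\geq 2$ dividing $A$ and $B$; as $\deg h=2$ this gives $h=D_2(z)=z^2-2\equiv z^2\pmod 2$, which is a permutation polynomial of all finite extensions of $\BF_2$ by \thmref{th:perm}, so $f_w$ is $2$-special after all. In the remaining case $A=B=0$, so $w=[x^a,y^b]$, one substitutes $\tr(x^ay^b)=ug_{a,b}(s,t)+h_{a,b}(s,t)$ into $\tr[x^a,y^b]=D_a(s)^2+D_b(t)^2+\tr(x^ay^b)^2-D_a(s)D_b(t)\tr(x^ay^b)-2$; since in characteristic $2$ the square $\tr(x^ay^b)^2$ contributes nothing to the coefficient of $u^1$, that coefficient equals $D_a(s)D_b(t)g_{a,b}(s,t)$, and comparing it (together with the $u^2$-coefficient $g_{a,b}^2$) with those of a putative decomposition $h(\phi u+\psi)$ forces $D_a(s)D_b(t)$ to be constant — a contradiction — so $f_{w,2}$ is $\BF_2$-noncomposite.
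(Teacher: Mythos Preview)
Your argument is correct and follows essentially the same route as the paper: assume $w$ is not equidistributed, pick a prime $p$ at which $f_w$ is $p$-composite but not $p$-special, invoke \propref{r-prime} (with $r=2$ prime) to get $p\ne 2$ and $A\ne 0$ or $B\ne 0$, and then apply \propref{prop:r2} (the paper cites it directly; you go through \corolref{compos}) to conclude $w=(x^ay^b)^2$. Your additional care about $p=2$---checking that $g_{a,b}(s,s)$ is monic over $\BZ$ and hence nonvanishing modulo every prime, and the alternative direct computation for the commutator---is sound but more than the paper itself spells out.
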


\begin{proof}
Suppose that $w$ is not  equidistributed. Then for some prime $p$
its trace polynomial $f_w$ is ${\BF}_p$-composite,
$f_w=h(q(s,u,t))$, and $h$ is not $p$-special. By \propref{r-prime},
$w\ne x^ay^bx^{-a}y^{-b}$ and $p>2.$ Then, by \propref{prop:r2},
$a=c$, $b=d,$ and  $w(x,y)=(x^ay^b)^2.$
 \end{proof}

\section{Generic words} \label{sec:gen}

In this section, we address the following question: picking up a
``generic'' word $w$, should we expect that it is equidistributed?
There is a large body of literature dedicated to the notion of genericity,
and there are several different approaches to this notion. We mostly follow
the setting adopted in \cite{KS}.

\begin{definition} \label{def:gen} (cf. \cite{KS})
Denote by $\cR$ some set of reduced words $w\in F_2$ written in form
$\eqref{canon}$. For a word of complexity $r$, let $\ell
(w)=\sum_{i=1}^r(|a_i|+|b_i|)$ denote the length of $w$. Let
$S\subseteq \cR$. Set
$$
\rho (n,S)=\#\{w\in S : \, \ell(w)\le n\},
$$
$$
\mu (n,S)=\frac{\rho (n,S)}{\rho(n,\cR)}.
$$

We say that $S$ is
\begin{itemize}
\item
{\em generic} if $ \lim_{n\to\infty} \mu (n,S)=1, $
\item
{\em exponentially generic} if it is generic and the convergence is exponentially fast,
\item
{\em negligible} if this limit equals $0$,
\item
{\em exponentially negligible} if it is negligible and the convergence is exponentially fast.
\end{itemize}
\end{definition}

Evidently, $S$ is (exponentially) generic if and only if the
complement $\cR\setminus S$ is (exponentially) negligible.

\begin{prop} \label{prop:gen}
Let $\cR$ be the set of words $w$ of {\em prime} complexity. Then
the set $S$ of words $w\in\cR$, such that the corresponding morphism
$\BP_w\colon \SL_{2,\BZ}\times \SL_{2,\BZ}\to \SL_{2,\BZ}$ is
$p$-equi\-dis\-tri\-bu\-ted for all but finitely many primes $p$, is
exponentially generic in $\cR$.
\end{prop}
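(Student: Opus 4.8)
\emph{Overview and the key reduction.} The plan is to prove that the complement $\cR\setminus S$ is \emph{exponentially negligible} in $\cR$ — equivalently (as observed right after \defnref{def:gen}) that $S$ is exponentially generic — by trapping $\cR\setminus S$ inside an explicit ``thin'' family of power words and then comparing the two counting functions. So let $w\in\cR\setminus S$ be of prime complexity $r$. By the definition of $S$, the morphism $\BP_w$ fails to be $p$-equidistributed for infinitely many primes $p$; since only finitely many primes are $\le r$, at least one such $p$ satisfies $p>r$. For that $p$, \corolref{compos} applies and gives $w=v(x,y)^r$, and tracing through its proof (via \propref{prop:r2}) yields more precisely $w=(x^{\al}y^{\be})^r$ with $\al=A(w)/r$, $\be=B(w)/r$; as $w$ is reduced of complexity $r\ge 2$, both $\al$ and $\be$ are nonzero. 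Hence
$$
\cR\setminus S\ \subseteq\ \mathcal B:=\bigl\{\,(x^{\al}y^{\be})^{r}\ :\ r\ \text{prime},\ \al,\be\in\BZ\setminus\{0\}\,\bigr\}.
$$

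\emph{Counting $\mathcal B$ from above.} A word $(x^{\al}y^{\be})^{r}$ has length $\ell=r(|\al|+|\be|)$, so it contributes to $\rho(n,\mathcal B)$ only when $r(|\al|+|\be|)\le n$, which in particular forces $r\le n/2$; and for a fixed prime $r$ the number of pairs $(\al,\be)\in(\BZ\setminus\{0\})^{2}$ with $|\al|+|\be|\le n/r$ is at most $2(n/r)^{2}$. Therefore
$$
\rho(n,\cR\setminus S)\ \le\ \rho(n,\mathcal B)\ \le\ \sum_{r\ \text{prime}}2\Bigl(\frac{n}{r}\Bigr)^{2}\ =\ 2n^{2}\sum_{r\ \text{prime}}\frac{1}{r^{2}}\ \le\ C_{0}\,n^{2},
$$
with $C_{0}$ an absolute constant: the bad set grows only polynomially.

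\emph{Counting $\cR$ from below, and conclusion.} By Bertrand's postulate there is, for every $n\ge 8$, a prime $r$ with $\lfloor n/4\rfloor<r\le 2\lfloor n/4\rfloor\le n/2$. Every word $x^{a_{1}}y^{b_{1}}\cdots x^{a_{r}}y^{b_{r}}$ with all $a_{i},b_{i}\in\{\pm1\}$ is reduced, lies in $\cR$ (its complexity is the prime $r$), has length $2r\le n$, and distinct sign patterns give distinct elements of $F_{2}$; hence $\rho(n,\cR)\ge 4^{\,r}=2^{2r}\ge 2^{\,n/2-2}$. Combining this with the previous estimate,
$$
\mu(n,\cR\setminus S)=\frac{\rho(n,\cR\setminus S)}{\rho(n,\cR)}\ \le\ 4C_{0}\,n^{2}\,2^{-n/2},
$$
which tends to $0$ exponentially fast as $n\to\infty$ (the polynomial factor is absorbed into any rate $\lambda^{n}$ with $2^{-1/2}<\lambda<1$). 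Thus $\cR\setminus S$ is exponentially negligible, and $S$ is exponentially generic in $\cR$.

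\emph{Where the difficulty sits.} There is essentially nothing hard left in this argument: all the content is concentrated in \corolref{compos} (which itself rests on \propref{decom}, \propref{r-prime} and \propref{prop:r2}). The single point that requires care is the implication ``$\BP_w$ is not $p$-equidistributed for infinitely many $p$'' $\Rightarrow$ ``$\BP_w$ is not $p$-equidistributed for some $p>r$'', since this is exactly what makes \corolref{compos} applicable and forces every bad word to be the prime power $(x^{\al}y^{\be})^{r}$. Once $\cR\setminus S$ has been squeezed into $\mathcal B$, exponential genericity is merely the contrast between a subfamily of polynomial size $O(n^{2})$ and an ambient set of prime-complexity words of size $\gg 2^{n/2}$.
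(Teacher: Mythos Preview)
Your argument is correct and follows the paper's strategy for the key reduction: both proofs observe that a word $w\in\cR\setminus S$ fails $p$-equidistribution for some $p>r$ and then invoke \corolref{compos} (via \propref{prop:r2}) to force $w=(x^{\al}y^{\be})^{r}$. The only difference is the endgame: the paper simply cites \cite{AO} for the fact that proper powers form an exponentially negligible set, whereas you carry out an explicit elementary count (an $O(n^{2})$ upper bound on the bad words against a $2^{n/2}$ lower bound on $\cR$ obtained via Bertrand's postulate). Your route is more self-contained and also sidesteps a small point the paper leaves implicit---namely that the negligibility statement from \cite{AO}, proved inside all of $F_{2}$, transfers to the restricted ambient set $\cR$ of prime-complexity words---since you bound $\rho(n,\cR)$ from below directly. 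The paper's version, in exchange, is shorter.
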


\begin{proof}
Let $w\in \cR$. Suppose that $w\notin S$, i.e., there exist
infinitely many primes $p$ such that the word morphism $P_w$ is {\it
not} $p$-equidistributed. Denote by $\cP$ the set of all such
primes. By \corolref{compos}, $w=(x^ay^b)^r.$

 It remains to refer to \cite{AO} where
it is proven that the property of a word to be a proper power of
another word is exponentially negligible. Hence $S$ is exponentially
generic in $\cR$.\end{proof}

\begin{remark} \label{rem:hope}
We believe that with some more effort, one can significantly
streng\-then \propref{prop:gen}, in particular, by dropping the
primality restriction on the complexity. We leave this to experts in
word combinatorics.
\end{remark}

\section{Concluding remarks} \label{concl}

It is tempting to generalize our results in the following directions:
\begin{itemize}
\item[(i)]
extend them from words in two letters to words in $d$ letters, $d>2$;
\item[(ii)]
keep $d=2$ but consider arbitrary finite Chevalley groups;
\item[(iii)]
combine (i) and (ii).
\end{itemize}

Whereas in case (i) one can still hope to use trace polynomials,
which exist for any $d$, to produce criteria for equidistribution,
cases (ii) and (iii) require some new terms for formulating such
criteria and new tools for proving them.

Regardless of getting such criteria, it would be interesting to
compare, in the general case, the properties of having large image
and being equidistributed, in the spirit of Corollary \ref{surj}. We
dare to formulate the following conjecture.

\begin{conj} \label{conj:large}
For a fixed $p$, let $G_q$ be a family of Chevalley groups of fixed
Lie type over $\BF_q$ $(q=p^n$ varies$)$. For a fixed word $w\in
F_d$, $d\ge 2$, let $P_q=P_{w,q}\colon (G_q)^d\to G_q$ be the
corresponding map. Suppose that

\noindent $(*)$ for all $n$ big enough the image of $P_q$ contains
all regular semisimple elements of $G_q$.

Then the family $\{P_q\}$ is almost $p$-equidistributed.
\end{conj}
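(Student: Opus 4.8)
\textbf{Proof plan for Conjecture \ref{conj:large}.}
The plan is to follow the logical skeleton of \corolref{surj}, replacing the $\SL(2)$-specific ingredient (the trace polynomial and its possible compositeness) by a general Chevalley-group substitute built from the character variety or, equivalently, from the adjoint quotient $G\to G/\!\!/G\cong T/W$. First I would recall the Larsen--Pink / Larsen--Shalev / Larsen--Shalev--Tiep machinery (\cite{LP}, \cite{LS2}, \cite{LST}): for a fixed word $w\ne 1$ and a fixed Lie type, there is a dominant morphism of $\BZ$-schemes $\BP_w\colon G^d\to G$ whose generic fibre has the expected dimension $(d-1)\dim G$, and whose reductions $P_{w,q}$ have fibres of size $q^{(d-1)\dim G}(1+o(1))$ once one throws away a ``bad'' locus. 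The point is to locate, exactly as in Step 2 of the proof of \thmref{p-equi}, the obstruction to $p$-equidistribution: it can only come from the composed map $c\circ\BP_w\colon G^d\to G/\!\!/G$, where $c$ is the adjoint quotient, failing to be ``absolutely surjective'' over $\BF_q$ in a strong sense (i.e.\ factoring through a proper subvariety after a finite extension, or through a non-separable cover mimicking the permutation-polynomial case).

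The key steps, in order, would be: (1) set up the analogue of diagram \eqref{diagram}, with $G^d\to G$ over $c\circ\pi$ versus the map on adjoint quotients $\BA^\ell\to\BA^\ell$ (here $\ell$ is the rank), and establish a Lang--Weil/fibration estimate showing that over the regular semisimple locus the fibres of $\pi\colon G^d\to (G/\!\!/G)$ behave uniformly, up to an error $O(q^{-1/2})$ after discarding a codimension-one bad set --- this is the higher-rank version of \lemref{BG}; (2) invoke hypothesis $(*)$ to conclude that $c\circ P_{w,q}$ hits every regular semisimple conjugacy class, i.e.\ the image of the map of adjoint quotients omits only the non-regular locus, which has codimension $\ge 1$; (3) argue, as in Case 2 of the proof of \thmref{p-equi}, that if the family $\{P_q\}$ were \emph{not} almost $p$-equidistributed, then the associated morphism of adjoint quotients would have to be either non-dominant or ``$p$-special-defective'' --- meaning that over infinitely many $\BF_{q^m}$ it omits a positive-proportion subset of the regular locus --- contradicting (2); (4) combine (1) and the surjectivity onto regular semisimple classes to get the two-sided bound $|\#W_{g,q}/q^{(d-1)\dim G}-1|<Bq^{-\beta}$ for all $g$ outside a set of density $O(q^{-\alpha})$, which is precisely almost $p$-equidistribution.

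The hard part will be step (3): in rank one the dichotomy ``non-composite vs.\ permutation-polynomial-composite'' is a clean statement about polynomials in one variable (\thmref{th:perm}, Wan's theorem \cite{Wa}), and there is no equally clean classification of the ways a dominant morphism $\BA^\ell\to\BA^\ell$ (or $T/W\to T/W$) can fail to be ``surjective on $\BF_q$-points for all $q$''. One would need a Lang--Weil-type lower bound for the image of such a morphism --- something like: a dominant morphism of varieties over $\overline{\BF_p}$ whose image is not cut out by a proper subvariety hits a proportion $1-O(q^{-1/2})$ of $\BF_q$-points, \emph{unless} it factors through a purely inseparable isogeny --- together with a uniform version of the Wan--Cohen value-set estimates in several variables. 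I expect that establishing this ``geometric surjectivity implies arithmetic surjectivity up to the inseparable obstruction'' statement, uniformly in $q$, is the real content of the conjecture; everything else is a routine upgrade of the $\SL(2)$ arguments using standard fibration and Lang--Weil estimates (cf.\ \cite{GL}). A secondary technical nuisance is that one must handle the non-regular locus, the centre, and small characteristics (bad primes for the Lie type) by hand, exactly as the $\SL(2)$ proof sets aside $q$ even and $\pm 2\in\sigma_q(f_w)$.
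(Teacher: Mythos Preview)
The statement you are addressing is labeled a \emph{Conjecture} in the paper, and the paper does not supply a proof. Immediately after stating it, the authors write only that ``it is a challenging task to describe the words $w$ satisfying condition $(*)$'' and note that they know of no non-power word for which $(*)$ fails. So there is no ``paper's own proof'' to compare your proposal against; you are sketching an attack on an open problem.

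As a research outline your plan is sensible and correctly identifies the structural analogy with \corolref{surj}: replace the trace map $\SL_2\to\BA^1$ by the adjoint quotient $G\to T/W$, set up the analogue of diagram~\eqref{diagram}, and try to show that failure of $p$-equidistribution forces the induced map on adjoint quotients to miss a positive proportion of $\BF_q$-points, contradicting $(*)$. But you have also, quite honestly, located the genuine gap yourself. Step~(3) is the whole conjecture. In rank one the obstruction to equidistribution is completely captured by the one-variable decomposition $f_w=h\circ Q$ together with the Lidl--Niederreiter classification of universal permutation polynomials and the Wan value-set bound; in higher rank there is no known substitute for either ingredient. You would need a theorem of roughly the following shape: if a dominant morphism $\varphi\colon\BA^N\to\BA^\ell$ over $\overline{\BF_p}$ is such that for all large $q$ the image $\varphi(\BA^N(\BF_q))$ contains the complement of a fixed proper closed subset, then the generic fibre of $\varphi$ is geometrically irreducible (up to a purely inseparable factor). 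No such statement is available, and the examples one can build from Weil restrictions or from non-split tori suggest the correct formulation is delicate. Until that piece exists, steps (1), (2), (4) are routine Lang--Weil bookkeeping but do not close the argument.

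In short: your proposal is not a proof but a reasonable program, and its acknowledged missing step is exactly why the authors stated this as a conjecture rather than a theorem.
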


It is a challenging task to describe the words $w$ satisfying
condition (*) in Conjecture \ref{conj:large} (cf. the discussion in
\cite{LST} after Theorem~5.3.2). Certainly, words of the form
$w=v^k$, $k\ge 2$, do not satisfy this condition. We do not know any
non-power word for which (*) does not hold.

\medskip

One can try yet another direction: consider equidistribution
problems for matrix {\it algebras} and for polynomials more general
than word polynomials (see Introduction). Even the case of $2\times
2$-matrices is completely open.

\medskip

{\it Acknowledgements.} The authors were supported in part by the
Minerva Foundation through the Emmy Noether Research Institute for
Mathematics.  Kunyavski\u\i \ was supported in part by grant 1207/12
of the Israel Science Foundation. A part of the work was done during
the visit of the second author to the MPIM (Bonn). Support of these
institutions is gratefully appreciated.

We thank S.~Garion, I.~Kapovich, M.~Larsen, A.~Shalev, and
Yu.~Zarhin for helpful discussions and G.~L.~Mullen for providing
reference \cite{GC}.

\end{document}